\documentclass[12pt,reqno]{amsart}

\usepackage{amssymb, amsmath, amsthm, amsfonts}
\usepackage{mathabx}
\usepackage{thmtools}
\usepackage{mathtools}
\usepackage[centering, margin=1.1in]{geometry}
\usepackage{enumitem}
\usepackage[space]{cite}
\usepackage{verbatim}

\usepackage[colorlinks, linkcolor=black, citecolor=black, hypertexnames=false]{hyperref}
\usepackage[capitalize, nameinlink]{cleveref}

\usepackage[alphabetic, nobysame]{amsrefs}

\AtBeginDocument{\def\MR#1{}}

\mathtoolsset{showonlyrefs} 

\declaretheorem[name=Theorem, numberwithin=section]{theorem}

\declaretheorem[name=Lemma, sibling=theorem]{lemma}
\declaretheorem[name=Proposition, sibling=theorem]{prop}

\declaretheorem[name=Remark, sibling=theorem]{remark}
\declaretheorem[name=Conjecture, sibling=theorem]{conjecture}

\declaretheorem[name=Claim, numbered=no]{claim*}

\crefname{section}{Section}{Sections}
\Crefname{section}{Section}{Sections}

\crefname{appendix}{Appendix}{Appendices}
\Crefname{appendix}{Appendix}{Appendices}

\numberwithin{equation}{section}

\renewcommand{\Im}{\operatorname{Im}}
\renewcommand{\Re}{\operatorname{Re}}
\newcommand{\ord}{\operatorname{ord}}

\renewcommand{\pmod}[1]{\, (\mathrm{mod}\ #1)}
\newcommand\norm[1]{\left\Vert {#1} \right\Vert}

\newcommand{\Z}{\mathbb{Z}}
\newcommand{\Q}{\mathbb{Q}}
\newcommand{\R}{\mathbb{R}}
\newcommand{\C}{\mathbb{C}}
\newcommand{\N}{\mathbb{N}}

\DeclareMathOperator*{\Res}{Res}
\DeclareMathOperator{\vol}{vol}
\DeclareMathOperator{\SL}{SL}

\newcommand{\sumtwo}{\operatorname*{\sum\sum}}

\newcommand{\pMatrix}[4]{\left(\begin{matrix}#1 & #2 \\ #3 & #4\end{matrix}\right)}

\renewcommand{\pmatrix}[4]{\left(\begin{smallmatrix}#1 & #2 \\ #3 & #4\end{smallmatrix}\right)}

\begin{document}

\author{Alexandre de Faveri}
\address{EPFL SB MATH, Station 10, 1015 Lausanne, Switzerland}
\email{\url{alexandre.defaveri@epfl.ch}}

\author{Alexander Dunn}
\address{School of Mathematics, Georgia Institute of Technology,
    Atlanta, GA, USA}
\email{\url{adunn61@gatech.edu}}

\author{Jeffrey Hoffstein}
\address{Mathematics Department, Brown University,
Providence, RI, USA}
\email{\url{jeffrey\_hoffstein@brown.edu}}

\title[Non-orthogonality of the cubic and quartic large sieves]{Non-orthogonality of the cubic and quartic \\ large sieves via Rankin--Selberg}

\begin{abstract}
    We show unconditionally that the cubic and quartic large sieves are \emph{not} perfectly orthogonal. The main obstruction to perfect orthogonality comes from the bias exhibited by Gauss sums.

    Our proof requires two main inputs: a Lindel\"{o}f-on-average upper bound for the second moment of Kubota's Dirichlet series, and a tight average lower bound for the Fourier coefficients of a certain Rankin--Selberg convolution of metaplectic theta functions. The latter input is particularly important in the quartic case, where much less is known about Fourier coefficients of metaplectic theta functions. To establish both of these inputs, we adapt a Rankin--Selberg regularization method due to Zagier (1981).

    In addition to the cubic and quartic cases considered in this paper, we expect that the family of Hecke characters of each fixed order $n \geq 3$ over a number field $K \supset \Q(\zeta_n)$ is \emph{not} perfectly orthogonal. We provide a precise conjecture for the operator norm of these ensembles for each $n$.
\end{abstract}

\maketitle

\tableofcontents


\section{Introduction}

Let $\mathcal{X}$ be a finite set. Given a family $\mathcal{F}$ of harmonics $f: \mathcal{X} \rightarrow \mathbb{C}$,
a \emph{large sieve inequality} is an $L^2$-estimate 
for linear forms in $f \in \mathcal{F}$. That is, an inequality of the form
\begin{equation*}
\sum_{f \in \mathcal{F}} \Big | \sum_{x \in \mathcal{X}} a _x f(x) \Big |^2 \leq \Xi(\mathcal{F},\mathcal{X})  \sum_{x \in \mathcal{X}} |a_x|^2
\end{equation*}
for any $\mathbb{C}$-valued sequence $\boldsymbol{a}:=(a_x)_{x \in \mathcal{X}}$, and some quantity $\Xi(\mathcal{F},\mathcal{X}) \geq 0$.
The Cauchy--Schwarz inequality, and general principles for bilinear forms \cite[\S 7.3]{IK}, imply that
\begin{equation} \label{eq:bounds}
|\mathcal{F}|+|\mathcal{X}| \ll \Xi(\mathcal{F},\mathcal{X}) \leq |\mathcal{F}| |\mathcal{X}|.
\end{equation}

The asymptotic size of $\Xi(\mathcal{F},\mathcal{X})$ quantitatively measures
the orthogonality exhibited by $\mathcal{F}$ in a format that is useful in applications. The upper bound $|\mathcal{F}| |\mathcal{X}|$ in \eqref{eq:bounds} is the \emph{trivial bound}, and is often not useful. We refer to an estimate of the form
\begin{equation} \label{eq:perfect}
    \Xi(\mathcal{F},\mathcal{X}) \ll_\varepsilon (|\mathcal{F}| |\mathcal{X}|)^{\varepsilon} \cdot (|\mathcal{F}|+|\mathcal{X}|)
\end{equation}
that holds for all fixed $\varepsilon > 0$ as a perfectly orthogonal large sieve bound for the family $\mathcal{F}$, since it suffices for most applications\footnote{Though removing the $(|\mathcal{F}| |\mathcal{X}|)^{\varepsilon}$ term is important in some cases, see for instance \cite{Xia24,SS24}.}. The classical multiplicative large sieve (for Dirichlet characters) is perfectly orthogonal, and has a long history and numerous important consequences -- we refer the reader to \cite{Mon71} for more details. 

Let $n \geq 2$ be an integer and $K$ be a number field containing the $n$-th roots of unity, with ring of integers $\mathcal{O}_K$. The $n$-th power residue symbol in $K$ is defined, for each prime ideal $\mathfrak{p} \unlhd \mathcal{O}_K$ coprime to $n$ and $\alpha \in \mathcal{O}_K$ coprime to $\mathfrak{p}$, as the unique $n$-th root of unity satisfying\footnote{Note that $N(\mathfrak{p}) \equiv 1 \pmod{n}$ since $K$ contains the $n$-th roots of unity.}
\begin{equation*}
    \Big(\frac{\alpha}{\mathfrak{p}}\Big)_n \equiv \alpha^{\frac{N(\mathfrak{p})-1}{n}} \pmod{\mathfrak{p}}.
\end{equation*}
If $(\alpha, \mathfrak{p}) \neq 1$, we set $\big(\frac{\alpha}{\mathfrak{p}}\big)_n := 0$. The symbol can be extended multiplicatively to any ideal $\mathfrak{b} \unlhd \mathcal{O}_K$ with $(\mathfrak{b}, n) = 1$ by
\begin{equation*}
     \Big(\frac{\alpha}{\mathfrak{b}}\Big)_n  := \prod_{\substack{\mathfrak{p} \text{ prime}}} \Big(\frac{\alpha}{\mathfrak{p}}\Big)^{\ord_\mathfrak{p}(\mathfrak{b})}_n.
\end{equation*}

The $n$-th power residue symbols can be extended to a family of Hecke characters of order $n$. This extension was constructed by Fisher and Friedberg \cite{FF04} for $n=2$, and by Friedberg, the third author, and Lieman \cite{FHL} for general $n$. A detailed description is given in \cite[Section 2]{GL13}.

For the family of quadratic characters, corresponding to our setting with $n=2$, an influential result of Heath-Brown \cite{HB95} gives a perfectly orthogonal large sieve inequality for $K= \Q$. This was extended to number fields by Goldmakher and Louvel \cite{GL13}. Heath-Brown \cite{HB} also proved a large sieve inequality for cubic characters ($n=3$) over $\Q(\zeta_3)$ which gave partial orthogonality. The same bound in the cubic case was later established by Blomer, Goldmakher, and Louvel \cite{BGL} for every $n \geq 3$ and every number field $K \supset \Q(\zeta_n)$. Recently, Liu \cite{Liu} made the $(|\mathcal{F}| |\mathcal{X}|)^{\varepsilon}$-term in \eqref{eq:perfect} explicit for Heath-Brown's quadratic large sieve \cite{HB95}.

Whether a given family $\mathcal{F}$ satisfies \eqref{eq:perfect} is not a well-understood phenomenon.
Our knowledge of families $\mathcal{F}$ that demonstrably violate \eqref{eq:perfect} is limited to a few sporadic examples in the literature. It was a folklore belief \cite{BGL,HB} that the cubic large sieve could be improved to a perfectly orthogonal bound. This belief was disproved in 2024 under the Generalized Riemann Hypothesis (GRH) by the second author and Radziwi{\l\l} \cite{DR}, who showed that Heath-Brown's cubic bound is essentially optimal. This inspired their resolution (under GRH) of Patterson's 1978 conjecture \cite{Pat3} concerning the bias of cubic Gauss sums over primes \cite{DR}. Another notable example of a non-perfectly-orthogonal ensemble is the
$\Gamma_1(q)$ spectral large sieve of Iwaniec and Li \cite{IL07}.

\subsection{Results}

In this paper we consider the $n$-th order large sieve for $n=3$ and $n=4$. We show unconditionally that the cubic large sieve is essentially optimal, and that the quartic large sieve is not perfectly orthogonal. We also give a precise conjecture for the operator norm of the $n$-th order large sieve and describe how our methods may apply to prove the lower bound.

To describe our results, let $n \in \{3, 4\}$. We work over the simplest possible (quadratic) fields $K := \Q(\zeta_n)$, so $\mathcal{O}_{K}=\mathbb{Z}[\zeta_n]$. Denote the norm $N(a):=N_{K/\mathbb{Q}}(a)= |a|^2$ for $a \in K$. We study the operator norm of the corresponding $n$-th order large sieves over $K$, namely
\begin{equation}\label{eq:Xi_3_def}
    \Xi_3(A,B):= \sup_{\boldsymbol{\beta} \neq 0} \frac{1}{\norm{\boldsymbol{\beta}}^2_2} \sum_{\substack{ a \in \mathbb{Z}[\zeta_3] \\ a \equiv 1 \pmod{3} \\ N(a) \leq A}} \mu^2(a) \cdot \Bigg | \sum_{\substack{ b \in \mathbb{Z}[\zeta_3] \\ b \equiv 1 \pmod{3} \\ N(b) \leq B }} \mu^2(b) \beta_b \Big(\frac{a}{b}\Big)_3  \Bigg |^2
\end{equation}
and (denoting $\lambda := 1 + i$)
\begin{equation}\label{eq:Xi_def}
    \Xi_4(A,B):= \sup_{\boldsymbol{\beta} \neq 0} \frac{1}{\norm{\boldsymbol{\beta}}^2_2} \sum_{\substack{ a \in \mathbb{Z}[i] \\ a \equiv 1 \pmod{\lambda^3} \\ N(a) \leq A}} \mu^2(a) \cdot \Bigg | \sum_{\substack{ b \in \mathbb{Z}[i] \\ b \equiv 1 \pmod{\lambda^3} \\ N(b) \leq B }} \mu^2(b) \beta_b \Big(\frac{a}{b}\Big)_4  \Bigg |^2.
\end{equation}

The best known upper bound in both cases is
\begin{equation} \label{eq:BGL_upper_bound}
    \Xi_n(A,B) \ll_{\varepsilon} (AB)^{\varepsilon}(A+B+(AB)^{2/3}),
\end{equation}
due to Blomer, Goldmakher, and Louvel \cite[Theorem~1.3]{BGL}, 
and previously to Heath-Brown \cite[Theorem~2]{HB} in the cubic case.
We prove the following lower bounds, ruling out perfect orthogonality -- that is, $\Xi_n(A,B)$ is \textbf{not} $ \ll_\varepsilon (AB)^{\varepsilon} (A + B)$
for $n \in \{3,4\}$.

\begin{theorem}[Cubic large sieve lower bound] \label{thm:cubic_main}
    Let $\varepsilon > 0$. Then for any $A, B \geq 1$,
    \begin{equation} \label{eq:cubic_lower_bound}
        \Xi_3(A,B)  \gg_\varepsilon (AB)^{-\varepsilon} (A + B + (AB)^{2/3}).
    \end{equation}
\end{theorem}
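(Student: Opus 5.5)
We need lower bounds for $\Xi_3(A,B)$ of the three sizes $A$, $B$ and $(AB)^{2/3}$, each up to $(AB)^{-\varepsilon}$. The first two are the trivial lower bounds of \eqref{eq:bounds}. For $B$, take $\boldsymbol\beta$ supported on a single $b$, which gives $\Xi_3 \gg \#\{a : (a,b)=1\}\gg A$ after choosing $b$ prime. Symmetrically, take $\beta_b=1$ and restrict to the single term $a=1$ (or use the duality principle), which gives $\Xi_3\gg B$. The substantive claim is $\Xi_3(A,B)\gg (AB)^{2/3-\varepsilon}$. This bound only beats $A+B$ when $A^{1/2}\le B\le A^2$, so we may restrict to that range.

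\textbf{Construction of the test vector.} We use the bias of cubic Gauss sums. Cubic reciprocity gives $(a/b)_3=(b/a)_3$ for primary $a,b$. We take $\beta_b=\overline{\tilde g(b)}\,w(N(b)/B)$ on squarefree $b\equiv1\pmod 3$, where $\tilde g(b)=g(b)/N(b)^{1/2}$ is the normalized cubic Gauss sum and $w$ is a smooth bump. Then $\|\boldsymbol\beta\|_2^2\asymp B$. Dropping all $a$ except those in a short structured set (for instance $a$ of norm $\le A$ in a suitable shape), the inner sum becomes
\[
S(a)=\sum_b \mu^2(b)\overline{\tilde g(b)}\Big(\frac{a}{b}\Big)_3 w(N(b)/B).
\]
By the twisted multiplicativity of Gauss sums, $\overline{\tilde g(b)}(a/b)_3$ is essentially the coefficient of Kubota's Dirichlet series $\psi(s,a)$ (the Fourier coefficients of the cubic metaplectic theta function). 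Its Mellin inversion has a residue at $s=4/3$ of size $B^{5/6}\cdot\tau(a)$, where $\tau(a)$ is the $a$-th Fourier coefficient of the cubic theta function (Patterson). After the $\mu^2$ sieve and the squarefree restriction this survives up to $(AB)^{\varepsilon}$ losses, so heuristically $S(a)\approx c\,\tau(a) B^{5/6} + E(a)$. The error $E(a)$ comes from shifting the contour to the critical line $\Re s = 1/2$ (normalized), and it should be controlled in mean square by a Lindelöf-on-average second moment of $\psi(s,a)$ over $N(a)\le A$. By Cauchy--Schwarz we then obtain
\[
\sum_{N(a)\le A}|S(a)|^2 \;\ge\; \tfrac12 |c|^2 B^{5/3}\sum_{N(a)\le A}|\tau(a)|^2 - O\Big(\sum |E(a)|^2\Big).
\]

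\textbf{Main inputs and obstacle.} We need the lower bound $\sum_{N(a)\le A}\mu^2(a)|\tau(a)|^2\gg A^{1-\varepsilon}\cdot A^{-1/3}$ in the appropriate normalization. Concretely, $|\tau(a)|^2\asymp N(a)^{-1/3}$ on average in the normalization where the residue is $B^{5/6}$. This is a Rankin--Selberg statement for $\theta\times\bar\theta$, obtained via Zagier's regularization: the pole of the Rankin--Selberg $L$-function at $s=1$ has positive residue (essentially $\|\theta\|^2$ regularized), and a Tauberian argument gives the average. With this, the main term is $\asymp B^{5/3}A^{2/3}$. Dividing by $\|\boldsymbol\beta\|^2\asymp B$ gives $A^{2/3}B^{2/3}$, as desired.

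For the error we need $\sum_{N(a)\le A}|E(a)|^2\ll (AB)^{\varepsilon}(A+B)B$, i.e.\ a mean-value (large-sieve-type) bound for $\psi(1/2+it,a)$ averaged over $a$. In the range $A^{1/2}\le B\le A^2$ this is smaller than $A^{2/3}B^{5/3}$, because $AB< A^{2/3}B^{5/3}$ iff $A^{1/3}<B^{2/3}$, and $B^2<A^{2/3}B^{5/3}$ iff $B<A^2$. The main obstacle is this second-moment Lindelöf-on-average bound, uniform in both $a$ and $t$. I would try to prove it by the same Zagier-regularized Rankin--Selberg unfolding of $|\theta|^2$ against Eisenstein series, or by applying the functional equation of $\psi(s,a)$ and a Heath-Brown-type recursive bound. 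I would also need to ensure that the squarefree condition on $a$ does not destroy the positivity of the main term, e.g.\ by restricting to $a$ prime, where $\tau(a)$ is explicit.
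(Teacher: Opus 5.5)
Your skeleton matches the paper's: the same test vector $\boldsymbol\beta$, a polar term of size $B^{5/6}N(a)^{-1/6}$ from the pole at $s=4/3$, the inequality $|x+y|^2\ge\frac12|x|^2-|y|^2$, and the same numerology. Getting $\Xi_3\gg B$ from $a=1$ instead of duality is fine. In fact the remainder is $\ll(AB)^{1+\varepsilon}$ in all ranges, so your $(A+B)B$ target is more than you need.

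The step you call heuristic, $S(a)\approx c\,\tau(a)B^{5/6}$, is the substantive one, and the Rankin--Selberg route you propose for it does not close. Since $(a/b)_3=0$ for $(a,b)\neq1$, the $b$-sum is governed by the coprime-restricted series $\psi^{(3)}_a(a,s)$, not by $\psi^{(3)}(a,s)$. Its residue at $s=4/3$ is a linear combination of the $\tau_3(a/d)$ with $d\mid a$ \cite[Corollary~6.2]{DDDS24}, not a multiple of $\tau_3(a)$. An average lower bound for $\sum_{N(a)\le A}|\tau_3(a)|^2$ over all $a$ cannot localize to squarefree $a$. Nor can it rule out cancellation in that combination.

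In the cubic case the paper needs no Rankin--Selberg at all. Patterson's explicit evaluation \cite{Pat1}, packaged in \cite[Lemma~6.7]{DDDS24}, gives the polar term exactly as $C_1\widetilde H(\frac56)\,\overline{\widetilde g_3(a)}\,N(a)^{-1/6}B^{5/6}\prod_{\pi\mid a}(1+N(\pi)^{-1})^{-1}$ with $C_1\neq0$. Hence $|\mathcal P_B(a)|\gg_\varepsilon\mu^2(a)N(a)^{-1/6-\varepsilon}B^{5/6}$ pointwise for every $a$. Your closing remark about restricting to primes points in this direction, but it is the explicit formula doing the work. Rankin--Selberg is the paper's substitute only for $n=4$, where it must be combined with Suzuki's relations and the $n=4$-specific collapse of the coprimality combination.

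For the remainder, Rankin--Selberg applied to $|\theta|^2$ targets the wrong object. It only controls the residues $\tau(a)$, whereas you need $\psi^{(3)}(m,1+\varepsilon+iy)$ on a vertical line. These are Fourier coefficients of the metaplectic Eisenstein series $E(\cdot,s,\chi)$, so an unfolding argument would have to use $|E(\cdot,s,\chi)|^2$; this is what the paper does for $n=4$. Your second suggestion is the right one. The Lindel\"of-on-average bound $\sum_{N(m)\le A}|\psi^{(3)}(m,1+\varepsilon+it)|^2\ll_\varepsilon A^{1+\varepsilon}(1+|t|)^R$ follows from the cubic large sieve and is \cite[Lemma~6.6]{DDDS24}, which the paper cites. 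It is applied after Cauchy--Schwarz and the divisor bound reduce the $d\mid a$ terms in the remainder to a sum over $m=a/d$.
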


Note that Theorem \ref{thm:cubic_main} makes \cite[Theorem~1.4]{DR} unconditional\footnote{Unconditional versions of \cite[Theorems~1.1--1.3]{DR}, which concern primes, remain well out of reach.}.

\begin{theorem}[Quartic large sieve lower bound] \label{thm:quartic_main}
     Let $\varepsilon > 0$. Then for any $A, B \geq 1$,
    \begin{equation} \label{eq:quartic_lower_bound}
        \Xi_4(A,B)  \gg_\varepsilon (AB)^{-\varepsilon} (A + B + A^{3/4} B^{1/2} + A^{1/2} B^{3/4}).
    \end{equation}
\end{theorem}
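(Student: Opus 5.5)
We must prove the quartic lower bound \eqref{eq:quartic_lower_bound}. The terms $A$ and $B$ are easy: test $\boldsymbol{\beta}$ supported on a single $b=1$ for the $A$ term, and use duality (the norm of the adjoint is the same) with a single $a$ to get $B$ up to $(AB)^{-\varepsilon}$ losses from the squarefree density. By the symmetry coming from quartic reciprocity, $\big(\frac{a}{b}\big)_4=\pm\big(\frac{b}{a}\big)_4$ for primary $a,b$, with the sign depending only on congruence classes. Passing to the adjoint and restricting to a fixed residue class therefore interchanges the roles of $A$ and $B$ at a constant cost. So it suffices to prove $\Xi_4(A,B)\gg (AB)^{-\varepsilon}A^{1/2}B^{3/4}$, and only in the range where this term dominates, say $A\le B^{3/2}$ roughly. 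Outside that range $A$ or $B$ already wins.

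The test vector is a quartic Gauss sum. Take $\beta_b=\overline{g_4(c,b)}/N(b)^{1/2}$ for squarefree primary $b$ with $N(b)\le B$, and a parameter $c$ to be summed over. Expanding $\big|\sum_b\mu^2(b)\beta_b(\frac ab)_4\big|^2$ is not the right route. I would instead use the dual form: choose coefficients $\alpha_a$ on $N(a)\le A$ and lower bound the bilinear form
\[
\Big|\sum_{a,b}\alpha_a\beta_b\Big(\frac ab\Big)_4\Big|^2\le \Xi_4(A,B)\,\|\alpha\|^2\|\beta\|^2 .
\]
Take $\alpha_a$ to be a smooth weight on $N(a)\asymp A$, so $\|\alpha\|^2\asymp A$. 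Then $\sum_a(\frac ab)_4 w(N(a)/A)$ is evaluated by Poisson summation over $\Z[i]$. Since $(\cdot/b)_4$ is a character mod $b$, this gives $\frac{A}{N(b)}\sum_k g_4(k,b)\hat w(kA^{1/2}/b)$, a sum of Gauss sums with dual length $N(k)\ll N(b)/A$. Choosing $\beta_b$ aligned with $\overline{g_4(k,b)}$ summed over the short dual range makes the bilinear form equal to a positive-weighted average
\[
\sum_{N(k)\le B/A}\ \sum_{N(b)\sim B}\frac{A}{N(b)}\,\frac{|g_4(k,b)|^2\ \text{or}\ \tilde g(k,b)}{\cdots}
\]
plus off-diagonal terms in $k$. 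The main term then reduces to the bias of Gauss sums: averages $\sum_{N(b)\le X}\mu^2(b)\tilde g_4(k,b)$ of normalized quartic Gauss sums. These are Fourier coefficients of the metaplectic theta function and behave like $X^{3/4}\rho(k)$, where $\rho(k)$ is the theta coefficient. A careful choice normalizes everything so the resulting ratio gives $A^{1/2}B^{3/4}$.

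The key inputs, as the abstract indicates, fit into this plan as follows.

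\emph{Upper bound.} The denominator $\|\beta\|^2$ and the off-diagonal $k\neq k'$ terms are controlled by a Lindelöf-on-average second moment of Kubota's Dirichlet series $\sum_b g_4(k,b)N(b)^{-s}$, averaged over $k$. This yields Lindelöf-on-average bounds for the off-diagonal sum in $k$.

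\emph{Lower bound.} The diagonal main term is $\sum_{N(k)\le K}|\rho(k)|^2\cdot(\text{weights})$, or a correlation $\rho(k)\overline{\rho(k')}$. This needs a tight lower bound $\sum_{N(k)\le K}|\rho(k)|^2 \gg K^{1-\varepsilon}$ (suitably normalized), which is the Rankin--Selberg convolution of the theta function with itself. Its analytic continuation and pole come from Zagier's regularization, since theta is not cuspidal. Both inputs are obtained by adapting Zagier's renormalized Rankin--Selberg method.

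The main obstacle is this lower bound on the theta coefficients in the quartic case. Unlike the cubic case, where Patterson's explicit formula for cubic theta coefficients is available, the quartic coefficients are not known explicitly. One must extract the residue of the regularized Rankin--Selberg integral and verify that it is nonzero. One must also make sure the contribution from residue classes $k$ does not cancel after restricting to squarefree $k$, and that the upper bound is strong enough that the off-diagonal terms do not swamp it.
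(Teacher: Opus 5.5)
\textbf{There is a genuine gap: the central mechanism of your plan cannot produce the bias, and it yields only the trivial bound.}

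Take $\alpha_a$ to be a smooth weight. The best companion vector is then $\beta_b=\overline{\sum_a\alpha_a(\frac{a}{b})_4}$, so after Poisson you are bounding $\|\alpha\|^{-2}\sum_b\big|\frac{A}{N(b)}\sum_k g_4(k,b)\hat w(\cdot)\big|^2$. Aligning $\beta_b$ with $\overline{g_4(k,b)}$ lands you on exactly this quantity. For squarefree $b$ coprime to $kk'$ one has
\[
g_4(k,b)\overline{g_4(k',b)}=N(b)\,\overline{(\tfrac{k}{b})_4}\,(\tfrac{k'}{b})_4 .
\]
So the diagonal is just $|g_4(k,b)|^2=N(b)$, which is Plancherel and carries no bias. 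The off-diagonal terms are quartic character sums in $b$ with no Gauss-sum main term. The outcome is of the order of the trivial bound $B$.

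The bias $\sum_b\widetilde g_4(k,b)\approx B^{3/4}\tau_4(k)$ survives only if $\beta_b$ is \emph{not} a Gauss sum. In that case $\tau_4$ enters linearly, as $\sum_k\tau_4(k)$ against oscillating $b$-dependent weights. No lower bound is available for such a sum, and by Cauchy--Schwarz with the second moment it is too small anyway. The square $\sum|\tau_4|^2$ never appears in your scheme.

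Your exponent bookkeeping is also off. Gauss sums placed on the variable of length $B$ give, after dividing by $\|\beta\|^2\asymp B$, the term $A^{3/4}B^{1/2}$, not $A^{1/2}B^{3/4}$. The latter requires duality.

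\textbf{The missing idea is the route you dismissed: evaluate the inner sum for each fixed $a$, with no smooth $\alpha$.} Take
\[
\beta_b=\overline{\widetilde g_4(\eta\lambda^k,b)}\,\mathbf 1_{b\equiv u \pmod{4}}\,H(N(b)/B).
\]
Then $(\frac{a}{b})_4\overline{\widetilde g_4(\eta\lambda^k,b)}=\mathbf 1_{(a,b)=1}\overline{\widetilde g_4(\eta\lambda^k a,b)}$. So the inner sum is a smoothed sum of Gauss sums, namely a Mellin integral of $\psi_a(\eta\lambda^k a,s+\tfrac12;u)$.
\begin{itemize}
\item The coprimality condition is removed by the identity $\psi_a\Delta_a^*=\psi$, which is special to $n=4$.
\item Shifting to $\Re(s)=\tfrac12+\varepsilon$ produces, for each $a$, the polar term $\widetilde H(\tfrac34)B^{3/4}\Delta_a^*(\eta\lambda^k a,\tfrac54)^{-1}\tau_4(\eta\lambda^k a;u)$, plus a remaining integral.
\item Using $|x+y|^2\ge\frac12|x|^2-|y|^2$, the main term is $\gg(AB)^{-\varepsilon}A^{3/4}B^{3/2}$. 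This rests on $\sum_{N(a)\le A}\mu^2(a)|\tau_4(\eta\lambda^k a;u)|^2\gg A^{3/4}$, obtained from Rankin--Selberg for $|\vartheta|^2$ together with Suzuki's relations to restrict to squarefree $a$, maximizing over $\eta,k,u$.
\item The integral contributes $\ll(AB)^{1+\varepsilon}$ by the Lindel\"of-on-average second moment of $\psi$ over $a$. That is where the upper-bound input enters, not in $\|\beta\|^2\asymp B$, which is trivial.
\item Either the main term wins, giving $A^{3/4}B^{1/2}$ for $B\le A<B^2$, or $B^{1/2}\ll A^{1/4+\varepsilon}$ and the trivial bound $A$ suffices. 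Duality supplies the symmetric term.
\end{itemize}
None of these steps (the per-$a$ polar term, the coprimality identity, the squarefree restriction on the index of $\tau_4$, the final case split) is present in your proposal.
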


The obstruction to perfect orthogonality in \eqref{eq:cubic_lower_bound} and \eqref{eq:quartic_lower_bound} is most pronounced for $A \asymp B \asymp X$, when the lower bound in \eqref{eq:cubic_lower_bound} and \eqref{eq:quartic_lower_bound} is $\gg_\varepsilon X^{4/3-\varepsilon}$ and $\gg_\varepsilon X^{5/4-\varepsilon}$ respectively. 

Note that \cref{thm:cubic_main} essentially matches the upper bound \eqref{eq:BGL_upper_bound}, showing optimality of the cubic large sieve. \cref{thm:quartic_main} \emph{does not} match \eqref{eq:BGL_upper_bound}, but we believe that it is essentially tight. We therefore make the following conjecture.

\begin{conjecture}[Quartic large sieve upper bound] \label{opnormconj}
     Let $\varepsilon > 0$. Then for any $A, B \geq 1$,
    \begin{equation} \label{Xistep}
        \Xi_4(A,B) \ll_{\varepsilon} (AB)^{\varepsilon}(A + B + A^{3/4} B^{1/2} + A^{1/2} B^{3/4}).
    \end{equation}
\end{conjecture}

\subsection{The general case}
 
Theorems \ref{thm:cubic_main} and \ref{thm:quartic_main} and \cref{opnormconj} should generalize in a
natural way to the previously described family of Hecke characters of fixed order $n \geq 3$ over $K \supset \Q(\zeta_n)$. More precisely, we expect the analogous operator norm (considered in \cite[Theorem~1.3]{BGL}) to satisfy
\begin{equation}\label{eq:general_n_conjecture}
    \Xi_n(A,B) = (AB)^{o(1)}(A+B+A^{1-1/n} B^{2/n}+A^{2/n} B^{1-1/n}) \quad  \text{as} \quad \min(A, B) \to \infty.
\end{equation}

We have chosen to focus on $n \in \{3, 4\}$ to simplify the exposition and elucidate the new ideas, since many technical inputs are already available in the literature, or adaptable from it. It is also convenient that $\Q(\zeta_n)$ is a quadratic field of class number one. With more technical work, the lower bound of \eqref{eq:general_n_conjecture} should be within reach of current tools for all $n \geq 5$. We discuss some of the necessary ingredients in \cref{sec:cubic}, but do not prove any results in this generality. Any power-saving improvement on the $(AB)^{2/3+\varepsilon}$ term in \eqref{eq:BGL_upper_bound}
for any $n \geq 4$ would be a substantial advance in the theory.

In the next section we give a quick proof of \cref{thm:cubic_main} using work of David, Stucky, and the first two authors \cite{DDDS24} on cubic characters. The proof uses Patterson's evaluation of the Fourier coefficients of the cubic metaplectic theta function \cite{Pat1}. No such evaluation is available for $n\geq 4$, and this is the main obstruction we must overcome (for $n = 4$) in the rest of the paper. For the application to the large sieve lower bound, it suffices to show that these coefficients are large on average. We obtain this from Rankin--Selberg theory applied to quartic metaplectic Eisenstein series. Naturally some regularization is necessary, so we develop a version of Zagier's regularization method \cite{Zag} for a class of discrete subgroups of $\SL_2(\C)$, which may be of independent interest.

\subsection*{Acknowledgments}
The first author thanks Farrell Brumley for fruitful discussions on scattering matrices, and the Centre de recherches math{\'e}matiques (CRM) at Universit{\'e} de Montr{\'e}al for excellent working conditions.
The second author was supported by the NSF
Standard Grant DMS-2452303, an AMS-Simons Travel Grant, and the Richard A. Duke
Endowed Fund at the Georgia Tech School of Mathematics.


\section{The cubic lower bound and discussion of the proof} \label{sec:cubic}

\subsection{The cubic large sieve}
We work over $K = \Q(\zeta_3)$ in this section only. For any $r, \alpha \in \Z[\zeta_3]$ with $\alpha\equiv 1 \pmod{3}$ squarefree, denote
\begin{equation} \label{psizetadef}
    \psi^{(3)}_\alpha(r, s) := \sum_{\substack{c \in \Z[\zeta_3] \\ c\equiv 1 \pmod{3} \\ (c, \alpha)=1}} \frac{g_3(r, c)}{N(c)^s} \qquad \text{ for } \qquad g_3(r,c):=\sum_{d \pmod{c}} \Big(\frac{d}{c}\Big)_3 \check{e} \Big( \frac{r d}{c} \Big).
\end{equation}
Here, $\check{e}(z):=e^{2 \pi i \text{Tr}_{K/\Q}(z)}=e^{2 \pi i(z+\overline{z})}$. Then $\psi^{(3)}_\alpha(r, s)$ converges absolutely for $\Re(s) > \frac{3}{2}$. We also denote $\psi^{(3)}(r, s) := \psi^{(3)}_1(r, s)$ and $\widetilde{g}_3(c) := g_3(1, c) \cdot N(c)^{-1/2}$, where $N(c) := |{c}|^2$ is the norm in $K/\Q$. 

\begin{proof}[Proof of \cref{thm:cubic_main}]
    The duality principle for the large sieve \cite[(7.9) to (7.11)]{IK} and reciprocity for the cubic symbol imply \cite[Lemma~3.1]{BGL} that
    \begin{equation} \label{eq:duality_cubic}
        \Xi_3(A, B) \asymp \Xi_3(B, A).
    \end{equation}
    Thus we may assume $A \geq B \geq 1$.
    
    If $A \geq B^2$, then the trivial bound $\Xi_3(A, B) \gg A$ suffices for \cref{thm:cubic_main}. 
    This bound is realized by choosing the sequence $\boldsymbol{\beta}$ such that $\beta_b:=\mathbf{1}_{b=1}$. 
    
    Thus we may assume from now on that $B \leq A < B^2$. Choose $\boldsymbol{\beta}$ given by
    \begin{equation*}
        \beta_b = \overline{\widetilde{g}_3(b)} \cdot \mathbf{1}_{b \equiv 1 \pmod{3}} \cdot H\Big(\frac{N(b)}{B} \Big),
    \end{equation*}
    where $H:(0,\infty) \to \mathbb{R}_{\geq 0}$ is a smooth function with compact support in $(\frac{1}{2},1)$, which is fixed and not identically zero. 
    It follows from $|{\widetilde{g}_3(b)}| = \mu^2(b)$ for $b \equiv 1 \pmod{3}$ that
    \begin{equation}\label{eq:beta_L2_norm_cubic}
        \norm{\boldsymbol{\beta}}^2_2 \asymp B
    \end{equation}
    for $B$ sufficiently large. We assume that this is the case, otherwise $1 \leq A, B \ll 1$ and \cref{thm:cubic_main} is trivial.

    The complex conjugate of the sum over $b$ in \eqref{eq:Xi_3_def} is equal to
    \begin{align}\label{eq:b_sum_cubic}
        \sum_{\substack{ b \in \mathbb{Z}[\zeta_3] \\ b \equiv 1 \pmod{3}}}  \overline{\Big(\frac{a}{b}\Big)_3} \widetilde{g}_3(b) H\Big(\frac{N(b)}{B}\Big) =  \mathcal{P}_B(a)  + \mathcal{R}_B(a, \tfrac{1}{2}),
    \end{align}
    where we applied \cite[Lemma 6.7]{DDDS24} to obtain a polar term
    \begin{align}\label{eq:polar_term_cubic}
        \mathcal{P}_B(a) := \frac{C_1 \widetilde{H}(\tfrac{5}{6})\overline{\widetilde{g}_3(a)}}{N(a)^{1/6}} B^{5/6} \prod_{\substack{\pi \text{ prime} \\ \pi\equiv 1 \pmod{3} \\ \pi \mid a}} \Big(1 + \frac{1}{N(\pi)} \Big)^{-1},
    \end{align}
    and a remainder term
    \begin{align}\label{eq:remainder_term_cubic}
        \mathcal{R}_B(a, \tfrac{1}{2}) \ll_{\varepsilon} B^{1/2+\varepsilon} \sum_{\substack{d \in \Z[\zeta_3] \\ d\equiv 1\pmod{3} \\ d\mid a}} \frac{1}{N(d)^{1/2+\varepsilon}} \int_{-\infty}^\infty \frac{|\psi^{(3)}(\frac{a}{d}, 1+\varepsilon +iy)|}{(1+|y|)^{100}} \, dy.
    \end{align}
    From the proof of \cite[Lemma 6.7]{DDDS24}, it follows that the constant $C_1$ is given by $c_0 c_1$, where $c_0 \neq 0$ is given in \cite[(6.10)]{DDDS24} and $c_1 = 3^3$ by \cite[(6.8)]{DDDS24}. Hence $C_1 \neq 0$. Thus since the product over primes in \eqref{eq:polar_term_cubic} is $\gg_\varepsilon N(a)^{-\varepsilon}$ and $|{\widetilde{g}_3(a)}| = \mu^2(a)$, we have $\mathcal{P}_B(a) \gg_\varepsilon \mu^2(a) N(a)^{-1/6-\varepsilon} B^{5/6}$.

   It follows from the AM-GM inequality that $|{x+y}|^2 \geq \frac{1}{2}|{x}|^2 - |{y}|^2$, and so we conclude from \eqref{eq:b_sum_cubic} that
    \begin{equation} \label{eq:AMGM_cubic}
        \sum_{\substack{ a \in \mathbb{Z}[\zeta_3] \\ a \equiv 1 \pmod{3} \\ N(a) \leq A}} \mu^2(a) \cdot \Bigg | \sum_{\substack{ b \in \mathbb{Z}[\zeta_3] \\ b \equiv 1 \pmod{3} \\ N(b) \leq B }} \mu^2(b) \beta_b \Big(\frac{a}{b}\Big)_3  \Bigg |^2 \geq \frac{\mathcal{M}_3(A, B)}{2} - \mathcal{E}_3(A, B),
    \end{equation}
    where from \eqref{eq:polar_term_cubic} we have  
    \begin{equation} \label{eq:Mdef_cubic}
        \mathcal{M}_3(A, B):= \sum_{\substack{ a \in \mathbb{Z}[\zeta_3] \\ a \equiv 1 \pmod{3} \\ N(a) \leq A}} \mu^2(a) \cdot |{\mathcal{P}_B(a)}|^2 \gg_\varepsilon \sum_{\substack{ a \in \mathbb{Z}[\zeta_3] \\ a \equiv 1 \pmod{3} \\ N(a) \leq A}} \frac{\mu^2(a) B^{5/3}}{N(a)^{1/3+\varepsilon}} \gg A^{2/3 - \varepsilon} B^{5/3},
    \end{equation}
    and from \eqref{eq:remainder_term_cubic} and Cauchy--Schwarz (twice) we have
    \begin{align*}
        \mathcal{E}_3(A, B) := \sum_{\substack{ a \in \mathbb{Z}[\zeta_3] \\ a \equiv 1 \pmod{3} \\ N(a) \leq A}} \mu^2(a) \cdot |{\mathcal{R}_B(a, \tfrac{1}{2})}|^2 \ll_\varepsilon B^{1+\varepsilon} \int_{-\infty}^\infty \sumtwo_{\substack{ a, d \in \mathbb{Z}[\zeta_3] \\ a, d \equiv 1 \pmod{3} \\ N(a) \leq A, \ d \mid a}} \frac{|\psi^{(3)}(\frac{a}{d}, 1+\varepsilon +iy)|^2}{(1+|y|)^{100}} \, dy.
    \end{align*}
    Writing $m = \frac{a}{d}$, using a divisor bound, and applying the second moment bound of \cite[Lemma 6.6]{DDDS24} in dyadic intervals, observe that
    \begin{equation}\label{eq:E_bound_cubic}
        \mathcal{E}_3(A, B) \ll_\varepsilon (AB)^{1+\varepsilon} \int_{-\infty}^\infty \sum_{\substack{0 \neq m \in \mathbb{Z}[\zeta_3] \\ N(m) \leq A}} \frac{|\psi^{(3)}(m, 1+\varepsilon +iy)|^2}{N(m) \cdot (1+|y|)^{100}} \, dy \ll_\varepsilon (AB)^{1+\varepsilon}.
    \end{equation}

    We now consider two cases. First, if $\mathcal{M}_3(A, B) \geq 4 \cdot \mathcal{E}_3(A, B)$, then \eqref{eq:beta_L2_norm_cubic}, \eqref{eq:AMGM_cubic}, and \eqref{eq:Mdef_cubic} imply
    \begin{equation*}
        \Xi_3(A, B) \gg \frac{\mathcal{M}_3(A, B)}{\norm{\boldsymbol{\beta}}_2^2} \gg_\varepsilon \frac{A^{2/3 - \varepsilon} B^{5/3}}{B} \gg (AB)^{2/3-\varepsilon}.
    \end{equation*}
    This implies \cref{thm:cubic_main}, since $B \leq A < B^2$. 
    
    Now consider the remaining case $\mathcal{M}_3(A, B) < 4 \cdot \mathcal{E}_3(A, B)$. Using \eqref{eq:Mdef_cubic} and \eqref{eq:E_bound_cubic}, we obtain
    \begin{equation*}
        A^{2/3 - \varepsilon} B^{5/3} \ll_\varepsilon \mathcal{M}_3(A, B) \ll \mathcal{E}_3(A, B) \ll_\varepsilon (AB)^{1+\varepsilon},
    \end{equation*}
    hence $B^{2/3-\varepsilon} \ll_\varepsilon A^{1/3+2\varepsilon}$. In this range of parameters, the trivial bound $\Xi_3(A, B) \gg A$ suffices to obtain \cref{thm:cubic_main} after adjusting $\varepsilon > 0$. This finishes the proof.
\end{proof}

\subsection{Discussion of the proof and generalizations}

The proof of \cref{thm:cubic_main} relied on two main ingredients: a Lindel\"{o}f-on-average upper bound (in the twist aspect) for the second moment of $\psi^{(3)}$, of the form 
\begin{equation}\label{eq:second_moment}
    \sum_{\substack{0 \neq a \in \mathbb{Z}[\zeta_3] \\ N(a) \leq A}} |{\psi^{(3)}(a, 1 + \varepsilon +it)}|^2 \ll_\varepsilon A^{1+\varepsilon} (1+|{t}|)^{R}
\end{equation}
for some fixed $R \in \R_{>0}$, and the exact evaluation of the Fourier coefficients $\tau_3(a)$ of the cubic metaplectic theta function, due to Patterson \cite[Theorem~8.1]{Pat1}. 

We point out that \eqref{eq:second_moment} was not used in \cite{DR}. 
Let $\zeta_{\Q(\zeta_3)}(s)$ denote the Dedekind zeta function attached to $\Q(\zeta_3)$.
Instead of \eqref{eq:second_moment}, only the convexity bound \cite[(6.13)]{DDDS24}
\begin{equation} \label{convexbdsketch}
\zeta_{\Q(\zeta_3)}(3s-2) \cdot \psi^{(3)}(a,s) \ll_\varepsilon N(a)^{3/4-\Re(s)/2+\varepsilon} (1+|s|)^{100},
\end{equation}
for $1/2+\varepsilon \leq \Re(s) \leq 3/2$ and $|s-4/3| \geq \varepsilon$, was used in the proof of \cite[Theorem~1.4]{DR}. This meant that 
in the contour shifting argument (analogous to \eqref{eq:b_sum_cubic}) that occurs in \cite{DR}, one needs to move the contour further to the left and understand $\psi^{(3)}(a,s)$ in the half-plane $\Re(s) \geq 5/6+\varepsilon$, instead of $\Re(s) \geq 1+\varepsilon$. The GRH assumption in \cite[Theorem~1.4]{DR}
is needed to rule out poles of $\psi^{(3)}(a, s)$ coming from potential zeros of $\zeta_{\Q(\zeta_3)}(3s-2)$ in \eqref{convexbdsketch}. Interestingly, a similar argument using GRH and convexity does not seem to give the correct large sieve lower bound for $n\geq 4$.

An analogue of \eqref{eq:second_moment} is available for $n=4$ through Rankin--Selberg theory, since the corresponding Dirichlet series $\psi^{(4)}(a,s)$
 are Fourier coefficients of quartic metaplectic Eisenstein series. We expect similar arguments to work for all $n \geq 5$.
Note that the quartic large sieve of Blomer, Goldmakher, and Louvel \cite[Theorem~1.3]{BGL} could also be applied to obtain the Lindel\"{o}f-on-average bound for $\psi^{(4)}(a,s)$, as in the proof of \eqref{eq:second_moment} in \cite[Lemma 6.6]{DDDS24}.
However, it is more convenient to use Rankin--Selberg theory, since it is already needed to obtain lower bounds for \eqref{eq:tau_second_moment} below.

Determining the Fourier coefficients $\tau_4(a)$ of the quartic theta function is a central open problem in the theory of metaplectic forms; the analogous problem becomes even harder for general $n \geq 4$ \cite{BH,Deli,EckPat,KP,Pat2}. Thus we need a different approach to show the presence of a bias coming from the poles of $\psi^{(4)}(a,s)$. We once again achieve this through Rankin--Selberg theory, now applied to the quartic theta function. This gives a \emph{lower bound} of the expected order of magnitude for
\begin{equation}\label{eq:tau_second_moment}
    \sum_{\substack{0 \neq a \in \mathbb{Z}[i] \\ N(a) \leq A}} |{\tau_4(a)}|^2.
\end{equation}
We expect an analogous result to hold for general $n \geq 5$ by similar arguments.

However, there are two significant technical complications that we have so far ignored (in addition to congruence conditions, which we continue to ignore without further comment). First, we actually need a lower bound for the sum in \eqref{eq:tau_second_moment} restricted to \emph{squarefree} $a$, due to the presence of $\mu^2(a)$ in the large sieve \eqref{eq:Xi_def}. Second, the term $\tau_4(a)$ must be replaced by a linear combination of terms of the form $\tau_4(a/d)$. This is more clearly illustrated in the case $n=3$, where the sum in \eqref{eq:b_sum_cubic} is related by Perron's formula to the Dirichlet series $\psi_a^{(3)}(a, s)$ given in \eqref{psizetadef}, instead of the simpler $\psi^{(3)}(a, s)$, as our heuristics may suggest. To understand the polar term, one needs to decompose the former in terms of the latter as in \cite[Corollary 6.2]{DDDS24}, obtaining a linear combination of terms of the form $\psi^{(3)}(a/d, s)$ for $d\mid a$.

It should be possible to overcome both issues (for general $n$) using Rankin--Selberg theory for metaplectic forms with level structure. Indeed, after sieving for $\mu^2(a)$ in the first issue or opening the square in the second issue, it would suffice to obtain asymptotics, with some uniformity in $r_1$ and $r_2$, for sums roughly of the form
\begin{equation*}
    \sum_{\substack{0 < N(a) \leq A}} \tau_n(r_1 a) \overline{\tau_n(r_2 a)}.
\end{equation*} 

In this paper we avoid the technical difficulties of considering level structure (with the required uniformity) by using two tricks in the case $n=4$. In \cref{lemma:squarefree_tau_4}, we apply partial information on $\tau_4(a)$ obtained by Suzuki \cite{Suz1} to restrict \eqref{eq:second_moment} directly to squarefree $a$. Moreover, following \cite[Corollary 6.2]{CDD26}, we observe that in the exceptional case $n=4$, the linear combination of the terms $\tau_4(a/d)$ collapses to a multiple of $\tau_4(a)$. In fact, the proof of that corollary suggests that the analogous Dirichlet series with coprimality condition $\psi_a^{(n)}(a, s)$ should be a linear combination of terms roughly of the form $\psi^{(n)}(a d^{n-4}, s)$ for $d \mid a$. 
Hence, when $n=4$, no additional level structure arising from the coprimality condition with $a$ is needed, which vastly simplifies our work.


\section{Notation}

\subsection{Number fields}

Throughout the rest of the paper, let $K:=\mathbb{Q}(i)$ denote the Gaussian quadratic field, which has ring of integers $\mathcal{O}_{K}=\mathbb{Z}[i]$ and discriminant $d_K=-4$. Let $N(a):=N_{K/\mathbb{Q}}(a)=|a|^2$ denote the norm of $a \in K$, and $\boldsymbol{\mu}_K:=\{\pm 1, \pm i \}$ denote the group of units in $\mathcal{O}_K$. Let $\lambda:=1+i$, which generates the unique ramified prime ideal in $K$. Each ideal $0 \neq \mathfrak{c} \unlhd \mathcal{O}_K$ is principal, and if $(\mathfrak{c},\lambda)=1$ then there is a unique $c \in \mathcal{O}_K$ such that $\mathfrak{c}=(c)$ and $c \equiv 1 \pmod{\lambda^{3}}$.

\subsection{Gauss sums}

Let $\check{e}(z):=e^{2 \pi i \text{Tr}_{K/\Q}(z)}=e^{2 \pi i(z+\overline{z})}$ as before. For $c \in \Z[i]$ with $(c,\lambda) =1$ and $\nu \in \lambda^{-2} \mathbb{Z}[i]$, the quartic and quadratic Gauss sums over $\Z[i]$ are defined respectively as
\begin{equation*}
    g_4(\nu,c) := \sum_{d \pmod c} \Big(\frac{d}{c}\Big)_4 \check{e} \Big(\frac{\nu d}{c}\Big) \qquad \text{and} \qquad g_2(\nu,c):= \sum_{d \pmod c} \Big(\frac{d}{c}\Big)^2_4 \check{e} \Big(\frac{\nu d}{c} \Big).
\end{equation*}
When $\nu =1$, we simply write $g_4(c)$ and $g_2(c)$, respectively. We denote the normalized versions $\widetilde{g}_4(\nu, c) := N(c)^{-1/2} g_4(\nu, c)$ and $\widetilde{g}_4(c) := N(c)^{-1/2} g_4(c)$, and analogously for $\widetilde{g}_2$.

\subsection{Asymptotic inequalities}
Throughout the paper, the quantity $\varepsilon>0$ denotes an arbitrarily small and fixed number, possibly different in each instance.
For $n \in \mathbb{N}$ and $N>0$, we write $n \sim N$ to mean $N \leq n < 2N$. Moreover $f \ll g$ means that there exists a constant $c > 0$ such that $|f| \leq c |g|$, while $f \asymp g$ denotes $f \ll g \ll f$. Dependence of implied constants on parameters will be indicated as subscripts on the notations above. Implied constants in the body of the paper are allowed to depend on the implicit constants in $\asymp$ or $\ll$ notation.

 
\section{Group actions and Eisenstein series}

In this section we present some requisite material on Eisenstein series.

\subsection{Isometries of $\mathbb{H}^3$ and the Laplacian} \label{H3sec}
Let $\mathbb{H}^{3}$ denote the hyperbolic 3-space, which we model as the upper half-space $\mathbb{C} \times \mathbb{R}^{+}$.
Embed $\mathbb{C}$ and $\mathbb{H}^3$ in the (real) Hamilton quaternions  by identifying 
$i=\sqrt{-1}$ with $\hat{i}$ and 
$w=(z,v)=(x+iy,v) \in \mathbb{H}^3$ with $x+y \hat{i}+v \hat{j}$, where 
$1,\hat{i},\hat{j},\hat{k}$ denote the unit quaternions.  
Alternatively, a point $w=(z,v)$ is represented by the matrix $w=\pmatrix z {-v} {v} {\overline{z}}$,
and $u \in \mathbb{C}$ is represented by the matrix $\widetilde{u}=\pmatrix u 0 0 {\overline{u}}$.
The metric is $ds^2 = \frac{|dz|^2 + dv^2}{v^2}$, and $\operatorname{SL}_2(\mathbb{C})$ acts isometrically on $\mathbb{H}^3$ by
\begin{equation*}
\gamma w=(\widetilde{a}w+\widetilde{b})(\widetilde{c}w+\widetilde{d})^{-1}, \qquad  \text{ where }\gamma=\begin{pMatrix}
a b
c d 
\end{pMatrix} \in \operatorname{SL}_2(\mathbb{C}) \text{ and }
w \in \mathbb{H}^3.
\end{equation*}
In coordinates,
\begin{equation*}
\gamma w= \bigg( \frac{(az+b) \overline{(cz+d)}+a \overline{c} v^2}{|cz+d|^2 +|c|^2 v^2}, 
\frac{v}{|cz+d|^2+|c|^2 v^2} \bigg)
\qquad \text{ for } w=(z,v).
\end{equation*}
The action of $\operatorname{SL}_2(\mathbb{C})$ on
$\mathbb{H}^3$ is transitive, and the stabilizer of a point is a conjugate of $\operatorname{SU}_2(\mathbb{C})$.
The Laplace operator
$\Delta:=v^2( \partial^2/\partial x^2+
\partial^2/\partial y^2+\partial^2/\partial v^2)-v \partial/\partial v$
then acts on $C^{\infty}(\mathbb{H}^3)$
and commutes with the action of $\operatorname{SL}_2(\mathbb{C})$ on $C^{\infty}(\mathbb{H}^3)$.

Two important subgroups of $\operatorname{SL}_2(\mathbb{C})$ are the Borel subgroup
\begin{equation*}
    B(\C) := \Big\{ \left(\begin{smallmatrix}
    \alpha & \mu \\
    0 & \alpha^{-1}
\end{smallmatrix}\right)  : 0\neq \alpha \in \C, \mu \in \C \Big \},
\end{equation*}
which is the stabilizer of $\infty$, and its unipotent subgroup
\begin{equation*}
    N(\C) := \Big\{ \left(\begin{smallmatrix}
    1 & \mu \\
    0 & 1
\end{smallmatrix}\right)  : \mu \in \C \Big \}.
\end{equation*}

\subsection{Eisenstein series} \label{metaeissec}
Here we follow \cite[\S2-\S4]{Pat1} and \cite[\S 2]{Dia} with some mild generalizations
and modifications. Let $K=\mathbb{Q}(i)$ be as above and
$\Gamma \subseteq \operatorname{SL}_2(\mathcal{O}_{K})$ be a co-finite but not co-compact subgroup.
Let $d\mu(w)=\frac{dx\, dy\, dv}{v^3}$ be the standard volume measure on $\mathbb{H}^3$, which is $\operatorname{SL}_2(\mathbb{C})$-invariant.

Let $\kappa \in \C \cup \{\infty\}$ be a cusp of $\Gamma$ with scaling matrix $\sigma_{\kappa} \in \operatorname{SL}_2(\mathcal{O}_{K})$ (which exists since $K$ has class number $1$)
and stabilizer subgroup $\Gamma_{\kappa}$. In other words, $\sigma_\kappa \infty = \kappa$ and
\begin{equation*}
    \Gamma_{\kappa} := \{ \gamma \in \Gamma : \gamma \kappa = \kappa\} = \Gamma \cap \sigma_\kappa B(\C) \sigma_\kappa^{-1}.
\end{equation*}
Consider also the unipotent subgroup
\begin{equation*}
    \Gamma'_\kappa := \Gamma \cap \sigma_\kappa N(\C) \sigma_\kappa^{-1}.
\end{equation*}
Observe that $\sigma_\kappa^{-1} \Gamma_\kappa \sigma_\kappa = (\sigma_\kappa^{-1} \Gamma \sigma_\kappa)_\infty$ and $\sigma_\kappa^{-1} \Gamma'_\kappa \sigma_\kappa = (\sigma_\kappa^{-1} \Gamma \sigma_\kappa)'_\infty$. For this reason, we make two simplifying assumptions on our group: assume $\Gamma$ is a \textit{normal} subgroup of $\operatorname{SL}_2(\mathcal{O}_K)$, and that $\Gamma_\infty = \Gamma'_\infty$. This implies that $\Gamma_{\kappa} = \sigma_\kappa \Gamma_\infty \sigma_\kappa^{-1} = \sigma_\kappa \Gamma'_\infty \sigma_\kappa^{-1} = \Gamma'_{\kappa}$ for every cusp $\kappa$.

Set 
\begin{equation*}
    \Lambda=\Big \{ \mu \in \mathbb{C}:   \pmatrix 1 \mu 0 1  \in \Gamma_\infty \Big \},
\end{equation*}
which is a lattice in $\C$ by the definition of a cusp, and consider its dual lattice 
\begin{equation*}
    \Lambda^{*} = \Big \{ \delta \in \mathbb{C}: \mathrm{Tr}(\mu \delta) \in \mathbb{Z} \text{ for all } \mu \in \Lambda \Big\}. 
\end{equation*}
Write
\begin{equation} \label{voldef}
    \mathcal{V}:=\vol(\mathbb{C}/\Lambda)
\end{equation}
for the co-volume of the lattice $\Lambda$ with respect to the measure $dx \, dy$.

Let $\psi: \Gamma \to S^1$
be a unitary character. The function
\begin{equation*}
    \Lambda \to S^1: \mu \mapsto \psi \big(\sigma_{\kappa} \pmatrix {1} {\mu} {0} {1}\sigma^{-1}_{\kappa} \big)
\end{equation*}
is a homomorphism, so there exists $h_{\kappa} \in \mathbb{C}$ such that
\begin{equation} \label{homomorph}
    \psi \big(\sigma_{\kappa} \pmatrix {1} {\mu} {0} {1}\sigma^{-1}_{\kappa} \big) =\check{e}(h_{\kappa} \mu ),
\end{equation}
where $\check{e}(z):=e^{2 \pi i (z+\overline{z})}$ for $z \in \mathbb{C}$.
If the homomorphism in \eqref{homomorph} is trivial (i.e.\ $h_{\kappa}\in \Lambda^*$), then the cusp $\kappa$ is said 
to be an essential cusp with respect to $\psi$. Note that $h_\kappa$ depends on $\sigma_\kappa \in \SL_2(\mathcal{O}_K)$, but a different choice does not change whether a cusp is essential (it is equivalent to $\psi(\Gamma_\kappa) = 1$). All cusps of $\Gamma$ are essential with respect to the trivial character.

Fix a set $\{\eta_1, \eta_2, \dots, \eta_m\}$ of representatives for the (inequivalent) cusps of $\Gamma$
that are essential with respect to $\psi$. Denote $\sigma_i := \sigma_{\eta_i}$ (which we fix once and for all) and $\Gamma_i := \Gamma_{\eta_i}$. 
The Eisenstein series on $\Gamma$ attached to the
essential cusp $\eta_i$ is given by 
\begin{equation} \label{eisdef}
    E_{i}(w,s,\psi):=\sum_{\gamma \in \Gamma_{i} \backslash \Gamma } \overline{\psi}(\gamma) v(\sigma_{i}^{-1} \gamma w)^s
\end{equation}
for $w=(z, v) \in \mathbb{H}^3$ and $\Re(s)>2$. Note that it does not depend on the choice of $\sigma_i \in \SL_2(\mathcal{O}_K)$. When $\psi$ is the trivial character, we suppress it from the notation.
The Eisenstein series satisfies the transformation law
\begin{equation*}
    E_{i}(\gamma w,s,\psi)=\psi(\gamma) E_{i}(w,s,\psi) \qquad \text{ for any } \gamma \in \Gamma,
\end{equation*}
and $E_{i}(w,s,\psi)$ is an eigenfunction of the Laplacian $\Delta$.

A slight generalization of \cite[\S3.4, Theorem 4.1]{EGM} gives the Fourier expansion at the essential cusp $\eta_j$, namely 
\begin{align} \label{eisfourier}
    E_{i}(\sigma_{j} w,s,\psi) = \mathbf{1}_{i j} v^s + \sum_{\substack{\delta \in \Lambda^* }} v^{2-s} K (\delta v,s) \varphi_{i j}(\delta,s,\psi) \check{e}( \delta z ), 
\end{align}
where $\mathbf{1}_{i j}$ is the indicator of $i = j$, for $\delta \in \Lambda^*$ and $\Re(s)>2$ we have
\begin{equation} \label{varphidefgen}
    \varphi_{i j}(\delta,s,\psi):=\mathcal{V}^{-1} \sum_{\substack{\gamma=\pmatrix {a} {b} {c} {d}  \in \Gamma_{\infty} \backslash \sigma^{-1}_{i} \Gamma \sigma_{j} / \Gamma_{\infty} \\ c \neq 0 }} \overline{\psi}(\sigma_i \gamma \sigma_j^{-1}) \check{e} \Big( \frac{\delta d}{c} \Big)  N(c)^{-s},   
\end{equation}
and for $u \in \mathbb{C}$ we have
\begin{equation} \label{Kintdef}
    K(u,s):=\int_{\mathbb{C}} (|z|^2+1)^{-s} \check{e}(-uz) \, dx \, dy = \frac{(2 \pi)^s |u|^{s-1} K_{s-1}(4 \pi |u|)}{\Gamma(s)}.
\end{equation}
Here $K_{s-1}$ denotes the standard Bessel function, so in particular $K(0,s)=\frac{\pi}{s-1}$. It will be convenient to set 
\begin{equation} \label{rhozero}
    \rho_{i j}(0,v,s,\psi):=\mathbf{1}_{i j} v^s+v^{2-s} K(0,s)  \varphi_{i j}(0,s,\psi),
\end{equation}
and for $0 \neq \delta \in \Lambda^*$,
\begin{equation} \label{rhononzero}
    \rho_{i j}(\delta,v,s,\psi):= v^{2-s} K (\delta v,s) \varphi_{i j}(\delta,s,\psi).
\end{equation}

\begin{remark}\label{rmk:non_essential_cusps}
    We can also obtain the Fourier expansion of $E_i$ at a non-essential cusp $\kappa$ of $\Gamma$, by observing that $\check{e}(-h_\kappa z) E_i(\sigma_\kappa w, s, \psi)$ is $\Lambda$-periodic in $z$. This gives
    \begin{equation*}
        E_{i}(\sigma_{\kappa} w,s,\psi) = \sum_{\substack{\delta - h_\kappa \in \Lambda^*}} v^{2-s} K (\delta v,s) \varphi_{i \kappa}(\delta,s,\psi) \check{e}( \delta z ).
    \end{equation*} 
    The dual lattice $\Lambda^*$ is shifted by $h_\kappa$, hence there is no constant term, and the decay of $K(\delta v, s)$ for $\delta \neq 0$ implies that $E_i(\sigma_\kappa w, s, \psi)$ decays exponentially in $v$ for $\Re(s) > 2$, since the analogue of \eqref{varphidefgen} gives $\varphi_{i \kappa}(\delta, s, \psi) \ll_{\Re(s)} 1$. 
\end{remark}

A standard argument using the Maass--Selberg formula \cite{Sel,Pat1} shows that the Eisenstein series $E_{i}(w,s,\psi)$ have a meromorphic continuation to all of $\mathbb{C}$, and hence their Fourier coefficients do as well. Furthermore, the poles of $E_{i}(w,s,\psi)$ are among those of
$K(0,s) \varphi_{i j}(0,s,\psi)$ for integers $1\leq j \leq m$. Finally, the Eisenstein series satisfy the following functional equation relating $s \mapsto 2-s$. Consider the $m \times 1$ column vector
\begin{equation} \label{topvec}
    \boldsymbol{E}(w,s,\psi):=\big(E_{i}(w,s,\psi) \big)^{\top}
\end{equation}
and the $m \times m$ scattering matrix
\begin{equation} \label{Phidef}
    \boldsymbol{\Phi}(s,\psi):=\big(K(0,s) \varphi_{i j}(0,s,\psi) \big),
\end{equation}
where rows and columns are indexed respectively by $i$ and $j$. Then
\begin{equation} \label{Efunceq}
    \boldsymbol{E}(w,s,\psi)= \boldsymbol{\Phi}(s,\psi) \cdot \boldsymbol{E}(w,2-s,\psi)
\end{equation}
and
\begin{equation} \label{phifunc}
    \boldsymbol{\Phi}(s,\psi) \cdot \boldsymbol{\Phi}(2-s,\psi)=I.
\end{equation}

\subsection{The case of trivial character}\label{trivial_char_section}

We include some additional details in the special case when $\psi$ is trivial. These details are mainly used in \cref{zagregsec}. Recall that we suppress the trivial character $\psi$ from the notation.

\subsubsection{Standard results} \label{standard}

The following facts can be found\footnote{A change of variable $s \mapsto s+1$ is needed to reconcile \eqref{eisdef} with the definition in \cite[\S 3.2, (2.5)]{EGM}.} in \cite[\S 6.1, Theorem 1.11]{EGM}, and we also provide precise references in \cite{SarCoh}. The poles of the Eisenstein series $E_{i}(w,s)$ are among those of the entries of $\boldsymbol{\Phi}(s)$, by \cite[Corollary 1.66]{SarCoh}. They have no poles in the half-plane $\Re(s)>1$, except for finitely many poles on the real line that all occur in the interval $(1,2]$, see \cite[Proposition 1.56]{SarCoh}. All such poles are simple \cite[Corollary 1.68]{SarCoh}.  For each integer $1\leq i\leq m$, the poles of $E_{i}(w,s)$ in $(1,2]$ coincide with the poles of $\varphi_{i i}(0, s)$ \cite[\S 6.1, Theorem 1.11 (2, 4)]{EGM}. Both $E_{i}(w,s)$ and $\boldsymbol{\Phi}(s)$ are holomorphic on the line $\Re(s)=1$, and $\boldsymbol{\Phi}(s)$ is unitary there \cite[Corollary 1.65]{SarCoh}. Furthermore, $\boldsymbol{\Phi}(s)$ is symmetric \cite[Corollary 1.64]{SarCoh}, and the entries of $\boldsymbol{\Phi}(s)$ are bounded \cite[\S 6.3, Lemma 3.5]{EGM} in the region $\{ s \in \mathbb{C} : \Re(s) \geq 1 \text{ and } |{\Im(s)}| \geq \varepsilon \}$, for any $\varepsilon > 0$. 

Each $E_{i}(w,s)$ has a simple pole at $s=2$ \cite[Theorem 1.58]{SarCoh}, whose residue is the constant function of $w$ given by
\begin{equation} \label{constantres}
    \Res_{s=2} E_{i}(w,s)=\frac{\mathcal{V} }{\vol( \Gamma \backslash \mathbb{H}^3)},
\end{equation}
where $\mathcal{V}$ was defined in \eqref{voldef} and the volume in the denominator is with respect to the $\operatorname{SL}_2(\mathbb{C})$-invariant measure $\frac{dx\, dy\, dv}{v^3}$. Thus also
\begin{equation}\label{rhores}
    \Res_{s=2} \rho_{ij}(0, v, s) = \Res_{s=2} \frac{1}{\mathcal{V}}\int_{\C / \Lambda} E_i(\sigma_j (w+z), s) \, dx \, dy= \frac{\mathcal{V} }{\vol( \Gamma \backslash \mathbb{H}^3)}.
\end{equation}

Combining \eqref{rhozero} and \eqref{rhononzero} with \eqref{Kintdef} we have  
\begin{equation} \label{constterm}
    \rho_{i j}(0,v,s)=\mathbf{1}_{i j} \cdot v^s+ \frac{\pi}{s-1} \varphi_{i j}(0,s) \cdot v^{2-s}
\end{equation}
and
\begin{equation} \label{rhodeltatriv}
    \rho_{i j}(\delta,v,s)=\frac{(2 \pi)^s}{\Gamma(s)} N(\delta)^{(s-1)/2} v K_{s-1}(4 \pi |{\delta}|v)  \varphi_{i j}(\delta,s)
\end{equation}
for $0 \neq \delta \in \Lambda^*$, where 
\begin{align}\label{varphidef}
    \varphi_{i j}(\delta,s) &= \sum_{\substack{ \gamma=\pmatrix {*} {*} {c} {d} \in \Gamma_{\infty} \backslash \sigma_{i}^{-1} \Gamma \sigma_{j} / \Gamma_{\infty} \\ c \neq 0 } } \frac{\check{e}(\frac{\delta d}{c}) }{N(c)^s}.
\end{align}

In the present case of trivial $\psi$, recall that we denote
\begin{equation*}
    \boldsymbol{E}(w,s):=\big(E_{i}(w,s) \big)^{\top},
\end{equation*}
so that \eqref{Efunceq} and \eqref{phifunc} become
\begin{equation} \label{eisstarfunc}
    \boldsymbol{E}(w,s)=\boldsymbol{\Phi}(s)\cdot  \boldsymbol{E}(w,2-s) \qquad \text{and} \qquad \boldsymbol{\Phi}(s) \cdot \boldsymbol{\Phi}(2-s)=I.
\end{equation}
Let
\begin{equation*}
    \boldsymbol{\rho}(0,v,s):=\big(\rho_{i j}(0,v,s) \big).
\end{equation*}
Considering constant terms in \eqref{eisstarfunc} gives 
\begin{equation} \label{rhofunc}
    \boldsymbol{\rho}(0,v,s)=\boldsymbol{\Phi}(s) \cdot \boldsymbol{\rho}(0,v,2-s).
\end{equation}

\subsubsection{Uniform bounds} 

We will require certain uniform bounds for Eisenstein series, which are well-known to experts but whose precise forms do not seem to be available in the literature. These bounds are developed in detail in the present section. 

Let
\begin{equation}\label{eq:PSL2_fund_domain}
    \mathcal{F}' := \Big\{w = (z, v) \in \mathbb{H}^3 :  |{\Re(z)}| \leq \frac{1}{2}, \ 0 \leq \Im(z) \leq \frac{1}{2}, \text{ and } |z|^2 + v^2 \geq 1 \Big\},
\end{equation}
which by \cite[\S 7.3, Proposition 3.9]{EGM} is a fundamental domain for $\mathrm{PSL}_2(\mathcal{O}_K) \backslash \mathbb{H}^3$.

Let $\{\kappa_1:=\infty, \kappa_2, \dots,\kappa_h\}$ denote the set of all cusps of $\Gamma$,
and let $\sigma_i := \sigma_{\kappa_i} \in \SL_2(\mathcal{O}_K)$ be a choice of scaling matrices. To be consistent with the previous notation, we assume without loss of generality that the cusps are enumerated so that $\kappa_j = \eta_j$ for each integer $1 \leq j \leq m$.
Since $\Gamma \subseteq \SL_2(\mathcal{O}_K)$ has finite index, we can choose a finite set $\mathcal{T} \subset \SL_2(\mathcal{O}_K)_\infty$ of coset representatives for $\Gamma_\infty \backslash \SL_2(\mathcal{O}_K)_\infty / \{\pm I \}$. Moreover, since $\Gamma \subseteq \SL_2(\mathcal{O}_K)$ is normal and $\Gamma'_\infty = \Gamma_\infty$, it can be checked that
\begin{equation}\label{eq:fund_domain_def}
    \mathcal{F} := \bigcup_{1\leq j \leq h} \bigcup_{\tau \in \mathcal{T}} \sigma_j \tau \cdot \mathcal{F}'
\end{equation}
is a fundamental domain for $\Gamma \backslash \mathbb{H}^3$.

For each integer $1\leq i\leq h$ and any $A \geq 1$, let $E_i^A(w, s)$ denote the truncated Eisenstein series at height $A$. It is by definition a $\Gamma$-invariant function of $w \in \mathbb{H}^3$. 
Using \eqref{eq:fund_domain_def}, each $w \in \mathcal{F}$ can be written as $w = \sigma_j \tau w'$ for some $\sigma_j$ and $\tau$ as above, and some $w' \in \mathcal{F}^{\prime}$.
It is first defined on $\mathcal{F}$ by
\begin{equation} \label{eq:trunc_eis_series}
    E_i^A(\sigma_j \tau w', s) := 
    \begin{cases}
         E_i(\sigma_j \tau w', s) - \rho_{ij}(0, v(w'), s) & \quad \text{if } v(w') \geq A, \\
         E_i(\sigma_j \tau w', s) & \quad \text{if } v(w') < A,
    \end{cases}
\end{equation}
and then extended by $\Gamma$-invariance to all of $\mathbb{H}^3$.

Before proceeding, we need a straightforward lemma about the scattering matrix.

\begin{lemma} \label{omegalem}
    The entries of $\boldsymbol{\Phi}(s)$ satisfy $|\phi_{ij}(s)| \ll_{\Gamma, \varepsilon} 1$ for $s \in \Omega_{\varepsilon}$, where
    \begin{equation} \label{Omegadef}
        \Omega_{\varepsilon} := \{s \in \mathbb{C}: \text{$1+\varepsilon \leq \Re(s) \leq 10$, and $|s-s_0| \geq \varepsilon$ for each pole $s_0$ of $\Phi(\cdot)$} \}. 
    \end{equation}
\end{lemma}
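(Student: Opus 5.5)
The plan is to split $\Omega_\varepsilon$ into a region with large imaginary part and a compact region, and handle each by a different argument.

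\emph{Large imaginary part.} First handle $1+\varepsilon \leq \Re(s) \leq 10$ with $|\Im(s)| \geq T_0$, for a suitable fixed $T_0$. By the facts collected in \cref{standard}, all poles of $\boldsymbol{\Phi}$ in $\Re(s)>1$ are real and lie in $(1,2]$. Also, the entries of $\boldsymbol{\Phi}(s)$ are bounded on $\{\Re(s)\geq 1,\ |\Im(s)|\geq \varepsilon\}$ by \cite[\S 6.3, Lemma 3.5]{EGM}. This handles every $s\in\Omega_\varepsilon$ with $|\Im(s)|\geq\varepsilon$, and so we may take $T_0 = \varepsilon$ directly.

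For $\Re(s)>2$ with $\Im(s)$ arbitrary there is an alternative route. The Dirichlet series \eqref{varphidef} at $\delta=0$ converges absolutely, so $|\varphi_{ij}(0,s)| \leq \varphi_{ij}(0,\Re(s))$, which is bounded for $\Re(s)\geq 2+\varepsilon$. Since $|K(0,s)| = \pi/|s-1| \ll 1$ there, this gives $\phi_{ij}(s)\ll_{\varepsilon} 1$ on $2+\varepsilon\le\Re(s)\le 10$ uniformly in $\Im(s)$. This is consistent with the cited lemma and provides a check.

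\emph{Compact region.} It remains to treat $s\in\Omega_\varepsilon$ with $|\Im(s)|<\varepsilon$, i.e.\ $s$ in the compact rectangle $R=[1+\varepsilon,10]\times[-\varepsilon,\varepsilon]$ minus the open $\varepsilon$-discs around the poles. There are only finitely many poles in $(1,2]$, and no poles in $(2,10]$, since there the defining series converges absolutely. So $R$ minus these discs is a compact set on which each $\phi_{ij}$ is holomorphic, hence continuous. Continuity on a compact set gives boundedness, with a constant depending on $\Gamma$ and $\varepsilon$.

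\emph{Main obstacle.} The only subtle point is uniformity in $\Im(s)$ in the strip $1+\varepsilon\le\Re(s)\le 2+\varepsilon$, where the series does not converge absolutely. If one did not want to rely on \cite{EGM}, I would first try to deduce it from \eqref{phifunc} together with unitarity on $\Re(s)=1$. Because $\boldsymbol{\Phi}$ is symmetric and unitary on $\Re(s)=1$, its entries are bounded by $1$ there. On $\Re(s)=2+\varepsilon$ the entries are bounded by the absolute convergence argument above. Phragmén–Lindelöf applied in the strip $1\le\Re(s)\le 2+\varepsilon$, away from the real axis, then gives the bound, provided one has a priori polynomial-type growth in $|\Im(s)|$. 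The natural source for that growth is the Maass–Selberg relation, after multiplying by a polynomial that kills the finitely many real poles. For the proposal, however, I will simply cite \cite[\S 6.3, Lemma 3.5]{EGM} as recorded in \cref{standard}.
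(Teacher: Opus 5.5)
Your proposal is correct and follows the paper's proof: the bound for $|\Im(s)|\geq\varepsilon$ is taken from the boundedness of $\boldsymbol{\Phi}$ recorded in \cref{standard}, and the remaining piece $\Omega_\varepsilon\cap\{|\Im(s)|\leq\varepsilon\}$ is handled by compactness, since it avoids the finitely many real poles. Your side remarks (absolute convergence for $\Re(s)>2$, and the Phragm\'en--Lindel\"of alternative) are not needed for the argument.
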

\begin{proof}
    We have the claimed bound in the set $\{ s \in \mathbb{C} : \Re(s) \geq 1 \text{ and } |{\Im(s)}|  \geq \varepsilon \}$, as described in \cref{standard}. Furthermore, boundedness on the set $\Omega_{\varepsilon} \cap \{s \in \mathbb{C}: |{\Im(s)}| \leq \varepsilon\}$ follows by compactness.
\end{proof}

We have good control of the $L^2$-norm of $E_i^A(w, s)$ due to the bound below.

\begin{lemma}[Maass--Selberg bound]\label{lemma:Maass_Selberg}
    Let $\varepsilon > 0$ and $s \in \Omega_{\varepsilon}$, where $\Omega_{\varepsilon}$ is given in \eqref{Omegadef}. Then for any integer $1\leq i \leq h$ and $A \geq 1$ we have
    \begin{equation*}
        \int_\mathcal{F} |{E_i^A(w, s)}|^2 \, d\mu(w) \ll_{\Gamma, \varepsilon} A^{2(\Re(s)-1)}.
    \end{equation*}
\end{lemma}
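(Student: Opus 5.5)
We will prove the bound through the Maass--Selberg relations for truncated Eisenstein series on $\Gamma\backslash\mathbb{H}^3$. By the polarization identity (or Cauchy--Schwarz in each cusp), it suffices to treat the diagonal quantity $\langle E_i^A(\cdot,s), E_i^A(\cdot,s)\rangle$.

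\emph{Step 1: the inner product formula.} Apply Green's formula to $E_i^A(\cdot,s)$ and $E_i^A(\cdot,\bar s')$ on the truncated domain $\mathcal{F}^A$. This is $\mathcal{F}$ with each cuspidal region $\sigma_j\tau\{v(w')\geq A\}$ removed. Both functions are eigenfunctions of $\Delta$ with eigenvalues $s(s-2)$ and $s'(s'-2)$, so the interior integral reduces to boundary integrals over the horospheres $v=A$ in each cusp. There only the constant terms survive, after integrating over $\C/\Lambda$, by orthogonality of the characters $\check{e}(\delta z)$. On the removed cuspidal regions, $E_i^A$ equals the non-constant part, whose integral over $\C/\Lambda$ vanishes against constants. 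Its own $L^2$-norm also appears in the Green's identity and is handled in the same computation.

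The standard outcome, with $\phi_{ij}(s)=K(0,s)\varphi_{ij}(0,s)$ given by \eqref{constterm}, is
\begin{equation*}
\int_{\mathcal{F}} E_i^A(w,s)\overline{E_i^A(w,s')}\,d\mu(w) = c_\Gamma\sum_{j}\Big( \mathbf{1}_{ij}\frac{A^{s+\bar s'-2}}{s+\bar s'-2} - \phi_{ij}(s)\overline{\phi_{ij}(s')}\frac{A^{2-s-\bar s'}}{s+\bar s'-2} + \mathbf{1}_{ij}\overline{\phi_{ij}(s')}\frac{A^{s-\bar s'}}{s-\bar s'} - \phi_{ij}(s)\frac{A^{\bar s'-s}}{s-\bar s'}\Big).
\end{equation*}
Here $c_\Gamma$ accounts for $\mathcal{V}$ and $|\mathcal{T}|$. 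For non-essential cusps, or $j>m$, the constant term is zero, so those cusps contribute nothing.

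I would derive this by adapting \cite[\S 6.3]{EGM}, where the formula is essentially proved for $\SL_2$ over imaginary quadratic orders. The adaptation is checking that the normality of $\Gamma$ and the condition $\Gamma_\infty=\Gamma'_\infty$ make the cusp neighbourhoods in \eqref{eq:fund_domain_def} exactly $\Gamma_\infty\backslash\{v\geq A\}$ copies. This is the main technical point, and I expect it to be the main obstacle.

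\emph{Step 2: specialize to $s'=s$.} Write $\sigma=\Re(s)$ and $t=\Im(s)$. The first term gives $A^{2(\sigma-1)}/(2(\sigma-1))$. Since $\sigma\geq1+\varepsilon$, this is $\ll_\varepsilon A^{2(\sigma-1)}$. The second term is $\ll \sum_j|\phi_{ij}(s)|^2 A^{2-2\sigma}/(2\sigma-2)$. By \cref{omegalem}, this is $\ll_{\Gamma,\varepsilon} 1 \leq A^{2(\sigma-1)}$.

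The cross terms equal
\begin{equation*}
\frac{\overline{\phi_{ii}(s)}A^{2it}-\phi_{ii}(s)A^{-2it}}{2it} = \frac{\Im\big(\overline{\phi_{ii}(s)}A^{2it}\big)}{t}.
\end{equation*}
This is $\ll |\phi_{ii}(s)|/|t|$, which is fine when $|t|\geq\varepsilon$. When $|t|<\varepsilon$ we have a $0/0$ limit, and we need a bound uniform as $t\to0$. Use $|\Im(\bar\phi A^{2it})|\leq |\Im\bar\phi|+|\phi|\,|\sin(2t\log A)|$. This gives the bound $|\phi_{ii}(s)|\cdot 2\log A+|\Im\phi_{ii}(s)|/|t|$.

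Now $\phi_{ii}(\bar s)=\overline{\phi_{ii}(s)}$, because the Dirichlet series \eqref{varphidef} at $\delta=0$ has real coefficients. Also $\phi_{ii}$ is holomorphic on $\Omega_\varepsilon$. Hence $\Im\phi_{ii}(\sigma+it)/t$ is bounded by $\sup|\phi_{ii}'|$ on a slightly smaller region. By Cauchy's estimate this is $\ll_{\Gamma,\varepsilon}1$, after applying \cref{omegalem} with $\varepsilon/2$. Finally, $\log A\ll_\varepsilon A^{2\varepsilon}\leq A^{2(\sigma-1)}$.

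Summing over the finitely many cusps gives the lemma.
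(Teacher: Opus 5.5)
Your proposal is correct and is essentially the paper's proof. The paper quotes the Maass--Selberg relation directly from \cite{SarCoh}, so the adaptation of \cite[\S 6.3]{EGM} that you flag as the main obstacle is not needed. It then bounds the diagonal terms with \cref{omegalem}, and handles small $|t|$ by the same split you use: the difference quotient $(\phi_{ii}(\bar s)-\phi_{ii}(s))/2it$, plus the bound $|\sin(2t\log A)/t|\le 2\log A$.

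One small repair is needed, which the paper's write-up also glosses over. If $|t|$ is just below $\varepsilon$ and $\sigma$ is near a real pole $s_0$ of $\boldsymbol{\Phi}$, the segment $\{\sigma+iu : |u|\le|t|\}$ comes arbitrarily close to $s_0$, so $\sup|\phi_{ii}'|$ along it is not uniformly bounded. Split at $|t|=\varepsilon/2$ instead. For $|t|\ge\varepsilon/2$ the trivial bound $|\phi_{ii}(s)|/|t|$ suffices. For $|t|<\varepsilon/2$ one has $|\sigma-s_0|>\frac{\sqrt3}{2}\varepsilon$, and your Cauchy estimate goes through.

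Separately, the last term of your displayed relation is missing the factor $\mathbf{1}_{ij}$. You implicitly restore it in Step 2.
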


\begin{proof}
    This follows directly from the Maass--Selberg relation \cite[pg.\ 41]{SarCoh}. Denoting the entries of $\mathbf{\Phi}(s)$ by $\phi_{ij}(s) := K(0, s) \varphi_{i j}(0, s)$ and $s = \sigma + it$ with $\sigma, t \in \R$, it gives
    \begin{equation*}
        \int_\mathcal{F} |{E_i^A(w, s)}|^2 \, d\mu(w) = \frac{A^{2(\sigma - 1)} - A^{-2(\sigma-1)} \sum_{k=1}^h |{\phi_{ik}(s)}|^2}{2(\sigma-1)} + \frac{\overline{\phi_{ii}(s)} A^{2it} - \phi_{ii}(s) A^{-2it} }{2it}.
    \end{equation*}
    Since $|\phi_{ij}(s)| \ll_{\Gamma, \varepsilon} 1$ for $s \in \Omega_{\varepsilon}$ by Lemma \ref{omegalem}, if we assume that $|t| > \varepsilon$, then we directly obtain the desired result. 
    
    On the other hand, if $s \in \Omega_\varepsilon$ with $|{t}| \leq \varepsilon$, then the first term in the Maass--Selberg relation is still $\ll_{\Gamma, \varepsilon} A^{2(\sigma-1)}$, but some care must be taken with the second term (in particular, it is understood as a continuous extension in $t$ by taking the limit when $t=0$). Taking the Taylor expansion at $s=\sigma$ and using the relation $\overline{\phi_{ii}(s)} = \phi_{ii}(\overline{s})$, we deduce that
    \begin{equation*}
   \lim_{t \rightarrow 0}  \frac{\overline{\phi_{ii}(s)} A^{2it} - \phi_{ii}(s) A^{-2it} }{2it} = 2 \phi_{ii}(\sigma) \log{A} - \phi_{ii}'(\sigma).
    \end{equation*}
   By compactness (and since we stay away from the poles of $\phi_{ii}(s)$), 
   both $\phi_{ii}(s)$ and $\phi^{\prime}_{ii}(s)$ are $\ll_{\Gamma,\varepsilon} 1$
   for $s \in \Omega_\varepsilon$ with $|{t}| \leq \varepsilon$.
   Thus the display above is $\ll_{\Gamma, \varepsilon} 1+\log{A} \ll_{\Gamma, \varepsilon} A^{2(\sigma-1)}$ when $t=0$, since $\sigma \geq 1+\varepsilon$.
  Now assume that $s \in \Omega_{\varepsilon}$ with $0<|t| \leq \varepsilon$. 
  Using
\(\overline{\phi_{ii}(s)}=\phi_{ii}(\overline{s})\) and \(|A^{2it}|=1\), we have
\begin{equation*}
\Big | \frac{\overline{\phi_{ii}(s)}A^{2it}-
\phi_{ii}(s)A^{-2it}}{2it}
\Big| \leq \Big| \frac{\phi_{ii}(\overline{s})-\phi_{ii}(s)}{2it} \Big|+ |\phi_{ii}(s)| \cdot \Big | \frac{A^{2it}-A^{-2it}}{2it} \Big|.
\end{equation*}
The first quotient is bounded uniformly, since
\begin{equation*}
\frac{\phi_{ii}(\sigma-it)-\phi_{ii}(\sigma+it)}{2it}
=-\frac{1}{2t}\int_{-t}^{t} \phi_{ii}^{\prime}(\sigma+iu)\,du
\ll_{\Gamma,\varepsilon} 1.
\end{equation*}
For the second quotient, 
\begin{equation*}
\Big| \frac{A^{2it}-A^{-2it}}{2it} \Big| =\Big| \frac{\sin(2t \log A)}{t} \Big | \leq 2 \log A.
\end{equation*}
The result follows.
\end{proof}

In the sequel we will need a uniform bound for $E_i(w, s)$. 

\begin{lemma}[Uniform pointwise bound]\label{lemma:Eis_pointwise_bound}
     Let $\varepsilon > 0$ and $s \in \Omega_{\varepsilon}$, where $\Omega_{\varepsilon}$ is given in \eqref{Omegadef}. Then for any integer $1\leq i \leq h$ and $w \in \mathbb{H}^3$, we have
    \begin{equation*}
        E_i(w,s) \ll_{\Gamma, \varepsilon} (v_\mathrm{max}(w))^{\Re(s)} |{s}|^{1/2} \Big(1  + \frac{|{s}|}{v_\mathrm{max}(w)} \Big),
    \end{equation*}
where 
\begin{equation*}
v_\mathrm{max}(w) := \max_{1\leq j\leq h} \max_{\gamma \in \Gamma} v(\sigma_j^{-1} \gamma w).
\end{equation*}
\end{lemma}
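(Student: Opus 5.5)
We would prove \cref{lemma:Eis_pointwise_bound} by splitting $E_i$ into its truncated part and the constant terms, and controlling the truncated part through \cref{lemma:Maass_Selberg} combined with a mean-value (Sobolev-type) inequality for eigenfunctions of $\Delta$.

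\textbf{Reduction.} By $\Gamma$-invariance of $E_i(\cdot,s)$ and of $v_{\max}$, we may assume $w = \sigma_j \tau w' \in \mathcal{F}$ with $w' \in \mathcal{F}'$. Then $v(w') \gg 1$, and $v(w') \le v_{\max}(w)$ because $\sigma_j^{-1}\sigma_j\tau w' = \tau w'$ has height $v(w')$.

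\textbf{Step 1: constant terms.} Take a truncation height $A$, to be chosen later. If $v(w') \ge A$, write
\begin{equation*}
    E_i(w,s) = E_i^A(w,s) + \rho_{ij}(0,v(w'),s).
\end{equation*}
By \eqref{constterm} and \cref{omegalem}, $\rho_{ij}(0,v,s) \ll v^{\Re(s)} + v^{2-\Re(s)} \ll v_{\max}(w)^{\Re(s)}$ for $s \in \Omega_\varepsilon$, since $\Re(s) > 1$ and $|s-1| \ge \varepsilon$.

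\textbf{Step 2: the truncated part.} To bound $E_i^A(w,s)$ pointwise, we would use that $E_i^A$ is an eigenfunction of $\Delta$ with eigenvalue $\lambda = s(s-2)$ off the truncation boundary. Two routes are available.
\begin{itemize}
    \item Apply a Sobolev-type inequality $|f(w)|^2 \ll \int_{B(w,r)} \big(|f|^2 + |\Delta f|^2\big)\, d\mu$ on a ball $B(w,r)$.
    \item Better, use the mean-value property for eigenfunctions: convolve against a radial test kernel $k$ supported on a ball of radius $r$. Then $\int k(w,u) f(u)\,d\mu(u) = \hat k(s) f(w)$, where $\hat k(s)$ is the Selberg/Harish-Chandra transform.
\end{itemize}
In the second route, choosing $r \asymp |s|^{-1}$ keeps $\hat k(s) \gg 1$. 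Cauchy--Schwarz then gives
\begin{equation*}
    |f(w)| \ll \|k\|_{L^2}\, \|f\|_{L^2(B(w,r))} \ll r^{-3/2}\, \|f\|_{L^2(\mathcal{F})} \cdot (\text{multiplicity}).
\end{equation*}
The multiplicity factor counts how many $\Gamma$-translates of $\mathcal{F}$ meet $B(w,r)$. It is $\ll 1 + r^{-1}\,\cdots$, which should produce the factor $1 + |s|/v_{\max}$ after accounting for the cusp width at height $v$.

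To keep the ball in the region where $E_i^A$ is an honest eigenfunction, we would choose $A$ so that the ball lies entirely on one side of the truncation. A clean choice is to apply the bound to $E_i$ itself minus the constant terms of all cusps at height about $v_{\max}(w)/2$. \cref{lemma:Maass_Selberg} then gives
\begin{equation*}
    \|E_i^A\|_2 \ll A^{\Re(s)-1} \ll v_{\max}^{\Re(s)-1},
\end{equation*}
and the Euclidean-scale volume factor $v^{-1}$ from the cusp geometry converts this into the stated shape $v_{\max}^{\Re(s)} |s|^{1/2} (1 + |s|/v_{\max})$.

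\textbf{Main obstacle.} The main difficulty is this last bookkeeping: getting exactly the powers $|s|^{1/2}$ and $v_{\max}^{\Re(s)}$ with uniformity in $s$. If the hyperbolic-ball approach gives the wrong power of $|s|$, we would instead work directly in the cusp. There one can write the nonconstant Fourier part as $E_i - \rho_{ij}(0,\cdot)$ and bound it by Parseval in $z$, together with Bessel function bounds from \eqref{rhodeltatriv}, integrating over $v$ in the window $[v/2, 2v]$, where the eigenfunction property in $v$ gives Sobolev control with loss $|s|$.
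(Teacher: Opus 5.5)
Your second route is essentially the paper's proof. The paper takes a point-pair invariant $k$ at hyperbolic scale $r\asymp|s|^{-1}$ (it sets $\delta=c|s|^{-2}$), so that $|H(s)|\ge\frac12$. It then writes $H(s)E_i(w,s)=\int_{\mathcal F}K_\Gamma(w,u)E_i(u,s)\,d\mu(u)$ and concludes by Cauchy--Schwarz and \cref{lemma:Maass_Selberg}.

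The one difference in setup is where the truncation sits. The paper truncates \emph{above} $w$, at $A(w)=2v_{\mathrm{max}}(w)$. Since $K_\Gamma(w,u)\neq0$ forces $v(u')<2v_{\mathrm{max}}(w)$, one has $E_i^{A(w)}=E_i$ on the support of the kernel, so the mean-value identity is applied to $E_i$ itself, which is an eigenfunction everywhere. This makes your Step 1 unnecessary. It also avoids the case split that your choice $A\approx v_{\mathrm{max}}(w)/2$ forces, since that height is not admissible when $v_{\mathrm{max}}(w)<2$. Your lower truncation does work once $v_{\mathrm{max}}(w)\ge2$: near $w$, $E_i^A$ coincides with $E_i(\sigma_j\,\cdot,s)-\rho_{ij}(0,\cdot,s)$, which is an honest eigenfunction.

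The real problem is the step you flag as the main obstacle: as written, it is wrong. The quantity that matters is $\int_{\mathcal F}|K_\Gamma(w,u)|^2\,d\mu(u)$. Cauchy--Schwarz in the sum over $\gamma$ gives
\begin{equation*}
\int_{\mathcal F}|K_\Gamma(w,u)|^2\,d\mu(u)\ll \Big(\max_{u}\#\{\gamma\in\Gamma:\gamma u\in B(w,r)\}\Big)\int_{\mathbb{H}^3}|k(w,u)|^2\,d\mu(u),
\end{equation*}
with $\int_{\mathbb{H}^3}|k|^2\,d\mu\asymp r^{-3}$.

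At height $v=v_{\mathrm{max}}(w)$ the ball $B(w,r)$ has Euclidean horizontal radius $\asymp rv$, so the multiplicity is $\ll 1+(rv)^2$. This grows with $v$, is not of the form $1+r^{-1}\cdots$, and enters only through a square root. The result is $\int_{\mathcal F}|K_\Gamma(w,u)|^2\,d\mu(u)\ll|s|^3+|s|\,v_{\mathrm{max}}(w)^2$, which is the bound from \cite[\S 6.4, Lemma 4.9]{EGM} that the paper quotes. Hence
\begin{equation*}
|E_i(w,s)|\ll\big(|s|^{3/2}+|s|^{1/2}v_{\mathrm{max}}(w)\big)\,v_{\mathrm{max}}(w)^{\Re(s)-1}=v_{\mathrm{max}}(w)^{\Re(s)}|s|^{1/2}\Big(1+\frac{|s|}{v_{\mathrm{max}}(w)}\Big).
\end{equation*}
The two terms have different sources:
\begin{itemize}
\item the $|s|/v_{\mathrm{max}}$ term is the single-ball contribution $r^{-3/2}v^{\Re(s)-1}$;
\item the leading $|s|^{1/2}v^{\Re(s)}$ term comes from the cusp multiplicity.
\end{itemize}
There is no separate ``Euclidean volume factor $v^{-1}$''. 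With the multiplicity as you describe it, and without the square root, your bookkeeping does not yield the stated bound. With the correct count, the kernel route closes directly and the Parseval fallback is not needed.
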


\begin{proof}
    Let $w_\ell = (z_\ell, v_\ell) \in \mathbb{H}^3$ for $\ell \in \{1, 2\}$, and let $\Psi: \R_{\geq 0} \to \R_{\geq 0}$ be a smooth function with compact support. Consider the point-pair invariant $k(w_1, w_2) := \Psi\big(\frac{|{z_1-z_2}|^2+|{v_1-v_2}|^2}{v_1 v_2}\big)$, and the corresponding kernel $K_\Gamma(w_1, w_2) := \sum_{\gamma \in \Gamma} k(w_1, \gamma w_2)$. Let $h$ denote the Selberg--Harish-Chandra transform of $\Psi$, so that\footnote{See for instance \cite[\S 3.5, (5.2) and (5.3)]{EGM}. Note that we follow the conventions of \cite[(1.18)]{SarCoh}, which are equivalent to \cite[\S 1.1, Propositions 1.6 and 1.9]{EGM} after performing $s \mapsto s+1$, $\Psi(x) \mapsto \Psi(1+ \frac{x}{2})$, and $H(x) \mapsto H(x(2-x))$.}
    \begin{equation}\label{eq:Selberg_kernel}
        \int_\mathcal{F} K_\Gamma(w, u) E_i(u, s) \, d\mu(u) = H(s) E_i(w, s).
    \end{equation} 

    By \eqref{eq:fund_domain_def}, we may write $u = \sigma_j \tau u' \in \mathcal{F}$ for $u' \in \mathcal{F}'$. In particular,
    \begin{equation*}
        v(u') = \max_{\sigma \in \SL_2(\mathcal{O}_K)} v(\sigma u) = \max_{1\leq r \leq h} v(\sigma_r^{-1} u).
    \end{equation*} 
    Since $\Psi$ is non-negative and $\Gamma \subseteq \SL_2(\mathcal{O}_K)$, it follows that
    \begin{equation}\label{eq:kernel_trivial_bd}
        K_\Gamma(w, u) \leq K_{\SL_2(\mathcal{O}_K)}(w, u) = K_{\SL_2(\mathcal{O}_K)}(w, \sigma_j \tau u') = K_{\SL_2(\mathcal{O}_K)}(w, u').
    \end{equation}
    Assume that $\Psi$ is supported on $[0, \delta]$ for $\delta>0$ sufficiently small. Then $K_{\SL_2(\mathcal{O}_K)}(w, u') \neq 0$ implies by a direct computation that 
    $v(u') < 2 v_{\mathrm{max}}(w)$. 
    
    Taking $A(w) := 2 v_{\mathrm{max}}(w)$, it then follows from the observations above and Cauchy--Schwarz that 
    \begin{align*}
        |H(s) E_i(w, s)| &= \Big | \int_\mathcal{F} K_\Gamma(w, u) E^{A(w)}_i(u, s) \, d\mu(u) \Big | \\
        & \leq \Big(\int_\mathcal{F} |{K_\Gamma(w, u)}|^2 \, d\mu(u)\Big)^{1/2} \Big(\int_\mathcal{F} |{E^{A(w)}_i(u, s)}|^2 \, d\mu(u)\Big)^{1/2}.
    \end{align*}
    The second term is $\ll_{\Gamma,\varepsilon} A(w)^{\Re(s)-1}$ for $s \in \Omega_{\varepsilon}$ by \cref{lemma:Maass_Selberg}. We now consider the first term. Using \eqref{eq:kernel_trivial_bd}, we deduce that
    \begin{align*}
        \int_\mathcal{F} |{K_\Gamma(w, u)}|^2 \, d\mu(u) &\leq \sum_{j=1}^h \sum_{\tau \in \mathcal{T}} \int_\mathcal{F'} |{K_{\SL_2(\mathcal{O}_K)}(w, u')}|^2 \, d\mu(\sigma_j \tau u') \\
        &\ll_\Gamma \int_\mathcal{F'} |{K_{\SL_2(\mathcal{O}_K)}(w, u')}|^2 \, d\mu(u').
    \end{align*} 

    We pick $\delta = c|s|^{-2}$, for a sufficiently small absolute constant $c>0$. Choose a smooth $\Psi \geq 0$ supported on $[0, \delta]$ such that $\sup_x |{\Psi(x)}| \ll \delta^{-3/2}$ and $\int_{\mathbb{H}^3} k(w, u) \, d\mu(u) = 1$, as in \cite[\S 6.4, (4.37)]{EGM}. If $c>0$ is sufficiently small, then by \cite[Lemma 7.5]{SarCoh_chapt7} (or the proof of \cite[\S 3.5, Lemma 5.6]{EGM}) we have $|H(s)| \geq \frac{1}{2}$. 
    
    Furthermore, \cite[\S 6.4, Lemma 4.9]{EGM} gives
    \begin{equation*}
        \int_\mathcal{F'} |{K_{\SL_2(\mathcal{O}_K)}(w, u')}|^2 \, d\mu(u') \ll v_\mathrm{max}(w)^2 |s| + |s|^3.
    \end{equation*}
    Putting everything together gives the claimed result.
\end{proof}

Finally, we need uniform decay for truncated Eisenstein series at the cusps.

\begin{lemma}[Decay of truncated Eisenstein series]\label{lemma:truncated_Eis_decay}
  Let $\varepsilon > 0$ and $s \in \Omega_{\varepsilon}$, where $\Omega_{\varepsilon}$ is given in \eqref{Omegadef}. There exist parameters $m_{\Gamma} > 0$ and $C_{\Gamma}>0$, depending only on $\Gamma$, such that for any $1\leq i, j \leq h$, $A \geq 1$, and $v \geq \widetilde{v} := \max(A, C_{\Gamma} |s|^2)$, we have
    \begin{equation*}
        \int_{\C / \Lambda} |{E_i^A(\sigma_j (z, v), s)}|^2 \, dx \, dy \ll_{\Gamma, \varepsilon} e^{-m_\Gamma (v - \widetilde{v})} \widetilde{v}^{2 \Re(s)} |{s}|.
    \end{equation*}
\end{lemma}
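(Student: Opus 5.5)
In the cusp region the truncated Eisenstein series is exactly its non-constant Fourier part, and Parseval turns its $L^2$-norm on the horocycle into a sum over $\delta \neq 0$ of $|K_{s-1}|^2$ terms. Each such term decays exponentially once $v \gg |s|^2$. To bound the coefficients, I would compare with the height $\widetilde v$, where the size of the series is controlled by \cref{lemma:Eis_pointwise_bound}.

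\textbf{Step 1 (Fourier expansion and Parseval).} For $v \geq A$ and $v$ larger than a constant depending on $\Gamma$, the point $\sigma_j(z,v)$ lies in the cuspidal part of $\mathcal{F}$ attached to $\kappa_j$ (after the translations $\tau$). By \eqref{eq:trunc_eis_series} we then have $E_i^A(\sigma_j(z,v),s) = E_i(\sigma_j(z,v),s) - \rho_{ij}(0,v,s)$, which is the non-constant part of the expansion \eqref{eisfourier}. For a non-essential cusp we use Remark~\ref{rmk:non_essential_cusps} instead, and there is no constant term. Parseval on $\C/\Lambda$ gives
\begin{equation*}
  \int_{\C/\Lambda} |E_i^A(\sigma_j(z,v),s)|^2 \, dx\, dy = \mathcal{V} \sum_{\delta \neq 0} |\varphi_{ij}(\delta,s)|^2 \, v^{2} \Big|\frac{(2\pi)^s}{\Gamma(s)}\Big|^2 N(\delta)^{\Re(s)-1} |K_{s-1}(4\pi|\delta| v)|^2 .
\end{equation*}
Here $\delta$ runs over $\Lambda^*\setminus\{0\}$, shifted by $h_{\kappa_j}$ in the non-essential case. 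In every case $|\delta| \geq m_0 > 0$ for some $m_0$ depending only on $\Gamma$, since $h_{\kappa_j} \notin \Lambda^*$ in the shifted case.

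\textbf{Step 2 (monotone Bessel decay).} We use the integral representation $K_\nu(x) = \int_0^\infty e^{-x\cosh t}\cosh(\nu t)\,dt$. It gives $|K_{s-1}(x)| \le K_{\Re(s)-1}(x)$. For real order $\nu$ with $x \gg \nu^2$, and certainly for $x \geq C|s|^2$, we also have the ratio bound $K_\nu(x)/K_\nu(y) \ll e^{-(x-y)}\sqrt{y/x}$ for $x \ge y$. The complex order is the delicate point. Rather than comparing $|K_{s-1}|$ at two points directly, I would compare each $\delta$-term at heights $v$ and $\widetilde v$ in the form
\begin{equation*}
  v^2|K_{s-1}(4\pi|\delta|v)|^2 \ll e^{-c|\delta|(v-\widetilde v)}\, \widetilde v^{2}\, |K_{s-1}(4\pi|\delta|\widetilde v)|^2 + (\text{tail}).
\end{equation*}
To get this I would use the uniform asymptotic $K_\nu(x) = \sqrt{\pi/2x}\, e^{-x}\,(1+O(|\nu|^2/x))$, valid for $x \geq C_\Gamma|s|^2$. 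This is where the choice $\widetilde v \geq C_\Gamma |s|^2$ enters: it makes the $(1+O(\cdot))$ factors harmless. Summing over $\delta$ with $|\delta| \geq m_0$ then gives
\begin{equation*}
  \int_{\C/\Lambda}|E_i^A(\sigma_j(z,v),s)|^2 \ll e^{-m_\Gamma(v-\widetilde v)} \int_{\C/\Lambda}|E_i^A(\sigma_j(z,\widetilde v),s)|^2 ,
\end{equation*}
with $m_\Gamma = 4\pi m_0$ after adjusting constants.

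\textbf{Step 3 (bound at height $\widetilde v$).} At $v = \widetilde v$ I bound the integrand pointwise. By \cref{lemma:Eis_pointwise_bound}, $|E_i| \ll \widetilde v^{\Re(s)}|s|^{1/2}(1+|s|/\widetilde v)$, and since $\widetilde v \geq C_\Gamma|s|^2$ the last factor is $O(1)$; note that $v_{\max}(\sigma_j(z,\widetilde v)) \asymp \widetilde v$ once $\widetilde v$ is large. From \eqref{constterm} and Lemma~\ref{omegalem}, $|\rho_{ij}(0,\widetilde v,s)| \ll \widetilde v^{\Re(s)} + \widetilde v^{2-\Re(s)} \ll \widetilde v^{\Re(s)}$. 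So the square of the truncated series is $\ll \widetilde v^{2\Re(s)}|s|$, and since the area of $\C/\Lambda$ is fixed, this combined with Step 2 gives the lemma.

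\textbf{Main obstacle.} I expect the hardest step to be the uniform Bessel comparison for complex order $s-1$ with $|s|$ large, because the decay of $K_{s-1}$ only becomes clean beyond $x \asymp |s|^2$. If the asymptotic approach is too messy, the fallback is to treat each Fourier mode $f_\delta(v)$ as a solution of the ODE $v^2 f'' - v f' + (s(2-s) - 16\pi^2|\delta|^2 v^2)f = 0$ and prove exponential decay of $|f_\delta|^2$ for $v \geq \widetilde v$ directly by a convexity (maximum-principle) argument for $|f_\delta|^2$. That argument uses only that the potential $16\pi^2|\delta|^2 - |s(2-s)|/v^2$ is $\gg |\delta|^2$ there.
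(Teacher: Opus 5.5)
Your proposal is correct and follows essentially the same route as the paper: Parseval on $\C/\Lambda$ for $v \geq A$, then a termwise comparison of $|K_{s-1}(4\pi|\delta| v)|$ with $|K_{s-1}(4\pi|\delta|\widetilde v)|$ via the uniform Hankel asymptotic (valid once $4\pi|\delta|\widetilde v \geq D(1+|s-1|^2)$, which is what fixes $C_\Gamma$), and finally the pointwise bound of \cref{lemma:Eis_pointwise_bound} together with \cref{omegalem} at height $\widetilde v$. That asymptotic already gives the ratio bound $|K_\nu(y+\Delta)| \leq 2e^{-\Delta/2}|K_\nu(y)|$ directly, so you need neither the ``tail'' term nor the ODE fallback; and since the character is trivial here, every cusp is essential and the non-essential case never arises.
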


\begin{proof}
    By \cite[\S10.40(iii)]{DLMF}, if $\nu \in \C$ and $y \geq 1 + |{\nu}|^2$ then 
    \begin{equation}\label{eq:Bessel_bound}
        K_\nu(y) = \Big(\frac{\pi}{2y}\Big)^{1/2} e^{-y} \Big( 1 + O\Big(\frac{1 + |{\nu}|^2}{y}\Big)\Big),
    \end{equation}
    where the implied constant is absolute. In particular, if $y \geq D(1 + |{\nu}|^2)$ with $D>1$ sufficiently large, then for any $\Delta \geq 0$ we obtain
    \begin{equation}\label{eq:Bessel_comparison}
        |K_\nu(y+\Delta)| \leq 2 e^{-\frac{\Delta}{2}} |K_{\nu}(y)|.
    \end{equation}

    From the Fourier expansion \eqref{eisfourier}, \eqref{rhononzero}, \eqref{rhodeltatriv}, and the definition in \eqref{eq:trunc_eis_series}, we deduce that
    \begin{equation*}
        \frac{1}{\mathcal{V}} \int_{\C / \Lambda} |{E_i^A(\sigma_j (z, v), s)}|^2 \, dx \, dy  = \frac{(2 \pi)^{2\Re(s)} v^2}{|\Gamma(s)|^2}  \sum_{\substack{0 \neq \delta \in \Lambda^* }} N(\delta)^{\Re(s)-1} |{\varphi_{i j}(\delta,s)}|^2  \cdot |{K_{s-1}(4 \pi |{\delta}|v)}|^2,
    \end{equation*}
     since $v \geq A$. Let $\displaystyle c_\Gamma := \min_{0 \neq \delta \in \Lambda^*} 4\pi|{\delta}| > 0$ and $C_{\Gamma}:=D c^{-1}_{\Gamma}$.
    By hypothesis $v \geq \widetilde{v} > C_{\Gamma}(1 + |{s-1}|^2)$. Hence \eqref{eq:Bessel_comparison} implies
    \begin{equation*}
        |{K_{s-1}(4 \pi |{\delta}|v)}|^2 \leq 4 e^{-4\pi |{\delta }|(v - \widetilde{v})} |{K_{s-1}(4 \pi |{\delta}|\widetilde{v})}|^2.
    \end{equation*} 
    Therefore, 
    \begin{equation}\label{eq:exp_gain}
        \int_{\C / \Lambda} |{E_i^A(\sigma_j (z, v), s)}|^2 \, dx \, dy \leq 4 e^{-c_\Gamma (v-\widetilde{v})} \Big( \frac{v}{\widetilde{v}} \Big)^2 \int_{\C / \Lambda} |{E_i^A(\sigma_j (z, \widetilde{v}), s)}|^2 \, dx \, dy.
    \end{equation}

    Finally, set as before $\phi_{ij}(s) := K(0, s) \varphi_{ij}(0, s)$, which are the entries of $\mathbf{\Phi}(s)$. We have $|{\phi_{ij}(s)}| \ll_{\Gamma, \varepsilon} 1$ 
    for $s \in \Omega_{\varepsilon}$ by Lemma \ref{omegalem}, and when combined with \cref{lemma:Eis_pointwise_bound} gives
    \begin{equation} \label{eq:EAbound}
        |{E_i^A(\sigma_j (z, \widetilde{v}), s)}| \leq |{E_i(\sigma_j (z, \widetilde{v}), s)}| + |{\mathbf{1}_{i j} \cdot \widetilde{v}^s+ \phi_{i j}(s) \cdot \widetilde{v}^{2-s}}|\ll_{\Gamma, \varepsilon} \widetilde{v}^{\Re(s)} |{s}|^{1/2},
    \end{equation}
    since $v_{\text{max}}(\sigma_j(z,\widetilde{v})) \asymp_{\Gamma} \widetilde{v}$ uniformly for $z \in \mathbb{C} / \Lambda$.
    The desired result follows after inserting the bound \eqref{eq:EAbound} into \eqref{eq:exp_gain}, and absorbing the $(v/\widetilde{v})^2$ factor into the exponential,
    resulting in a smaller constant $0<m_{\Gamma}<c_{\Gamma}$.
\end{proof}


\section{Rankin--Selberg regularization} \label{zagregsec}

We generalize Zagier's regularization method \cite{Zag} for automorphic functions on $\operatorname{SL}_2(\mathbb{Z}) \backslash \mathbb{H}^2$ not necessarily of rapid decay at the cusps to the setting of $\Gamma \backslash \mathbb{H}^3$. The notation from the previous section will be used throughout. 

\begin{prop}[Zagier regularization] \label{zagreg}
    Let $\Gamma \subseteq \operatorname{SL}_2(\mathcal{O}_{K})$ be a normal, co-finite but not co-compact subgroup such that $\Gamma_\infty = \Gamma'_\infty$. Let $\{\kappa_1 = \infty,\kappa_2,\ldots,\kappa_h\}$ be representatives for all cusps of $\Gamma$. Denote $\sigma_j := \sigma_{\kappa_j}$ and $\Gamma_j := \Gamma_{\kappa_j}$, with $\sigma_1 = I$. Let $F:\mathbb{H}^3 \to \mathbb{C}$ be a continuous $\Gamma$-invariant function with Fourier expansion at each cusp $\kappa_j$ given by 
    \begin{equation} \label{fourierexp}
        F(\sigma_{j}w) = \sum_{\delta \in \Lambda^{*}} a_{j}(\delta,v) \check{e}(\delta z), \qquad w=(z,v) \in \mathbb{H}^3.
    \end{equation}
    Suppose that for all $N \in \mathbb{N}$ we have 
    \begin{equation} \label{cuspasymp}
        F(\sigma_{j} w)=\varphi_{j}(v)+O_{F,N}(v^{-N}) \quad \text{as} \quad v \to \infty,
    \end{equation}
    where there are $c_{jk},\alpha_{jk} \in \mathbb{C}$ and $n_{jk} \in \mathbb{Z}_{\geq 0}$ such that
    \begin{equation} \label{varphiexp}
        \varphi_{j}(v)=\sum_{k=1}^{r} \frac{c_{jk}}{n_{jk}!} \cdot v^{\alpha_{jk}} \log^{n_{jk}}(v).
    \end{equation}
    Define the Rankin--Selberg transform of $F$ at the cusp $\kappa_j$ by
    \begin{align} \label{RStransform1}
    R_{j}(F,s) := \int_{0}^{\infty} \big(a_{j}(0,v)-\varphi_{j}(v) \big) v^{s-3}\, dv, 
    \end{align}
    where the integral converges absolutely for $\Re(s)$ sufficiently large, and let 
    \begin{equation*}
    \boldsymbol{R}(F,s):= \Big ( R_{j}(F,s) \Big)^{\top}_{1 \leq j \leq h}.
    \end{equation*}
    Then the entries of $\boldsymbol{R}(F,s)$ 
    can be meromorphically continued 
    to all $s \in \mathbb{C}$, and satisfy the vector functional equation 
    \begin{equation} \label{Rstarfunc}
        \boldsymbol{R}(F,s)=\boldsymbol{\Phi}(s) \cdot \boldsymbol{R}(F,2-s).
    \end{equation}
    
    The poles of the entries of $\boldsymbol{R}(F,s)$ can only occur at the points
    \begin{equation} \label{possiblepoles}
        s_0 \in  \bigcup_{\substack{1 \leq j  \leq h \\ 1 \leq k \leq r }}  \big\{\alpha_{jk}, 2-\alpha_{jk} \big\} \cup \big\{\emph{poles of }\boldsymbol{\Phi}(s) \big\} .
    \end{equation}
    More precisely,
    \begin{equation} \label{Rstarexp}
        \boldsymbol{R}(F,s)= - \boldsymbol{g}_1(s) - \boldsymbol{\Phi}(s) \cdot \boldsymbol{g}_1(2-s)  + \boldsymbol{V}(s),
    \end{equation}
    where
    \begin{equation} \label{hdef}
        \boldsymbol{g}_1(s) =-\Big({ \sum_{k=1}^{r} \frac{c_{jk}}{(2-\alpha_{jk}-s)^{n_{jk}+1}} }\Big )^{\top}_{1 \leq j \leq h}
    \end{equation}
    and $\boldsymbol{V}(s)$ is a vector of meromorphic functions on $\mathbb{C}$ whose poles can only occur at poles of $\boldsymbol{\Phi}(s)$.
    
    For the remaining claims, we assume that $\alpha_{jk} \not \in \{0,2\}$ for all integers $1 \leq j \leq h$ and $1 \leq k \leq r$. Then
   at $s=2$ each entry of $\boldsymbol{R}(F,s)$ has at most a simple pole. Let
     \begin{equation*} 
        \alpha:=\max_{j,k} \Re(\alpha_{jk}).
    \end{equation*}
    If $\alpha<2$, then for the constant vector $\boldsymbol{1} := (1)^{\top}_{1\leq j\leq h}$ we have
    \begin{equation} \label{Rstares2}
        \Res_{s=2} \boldsymbol{R}(F,s)=\frac{1}{\vol(\Gamma \backslash \mathbb{H}^3)} \cdot \int_{\Gamma \backslash \mathbb{H}^3} F(w)\, d \mu(w) \cdot \boldsymbol{1}.
    \end{equation}
    If $\alpha<1$, then for $\alpha<\Re(s)<2-\alpha$ with $s$ not a pole of $\boldsymbol{\Phi}(\cdot)$, we have that
    \begin{equation} \label{RSidentity}
        \boldsymbol{R}(F,s)= \frac{1}{\mathcal{V}}\int_{\Gamma \backslash \mathbb{H}^3} F(w) \boldsymbol{E}(w,s) \, d\mu(w).
    \end{equation}
\end{prop}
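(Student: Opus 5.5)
Following Zagier, we would integrate $F$ against the truncated Eisenstein series and then decompose the truncated domain by cusps. Fix a large height $T$. Let $\mathcal{F}_T$ be the part of the fundamental domain $\mathcal{F}$ of \eqref{eq:fund_domain_def} where $v(w')\le T$, and let $C_j(T)$ be the cuspidal region at $\kappa_j$, namely $\sigma_j\{(z,v): z\in\C/\Lambda,\ v>T\}$. For $\Re(s)$ large, consider
\begin{equation*}
I_i(T,s):=\int_{\mathcal{F}_T} F(w)E_i(w,s)\,d\mu(w)+\sum_{j}\int_{C_j(T)} F(w)\,E_i^T(w,s)\,d\mu(w)+\sum_j\int_{C_j(T)}\big(F(w)-\varphi_j(v(\sigma_j^{-1}w))\big)\rho_{ij}(0,v,s)\,d\mu.
\end{equation*}
This is the regularized integral of $F\cdot E_i$. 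It converges absolutely: on each $C_j(T)$ the factor $E_i^T$ decays exponentially by \cref{lemma:truncated_Eis_decay}, and $F-\varphi_j$ decays rapidly by \eqref{cuspasymp}.

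The first step is to compute $I_i(T,s)$ by unfolding. For $\Re(s)>2$ we unfold $E_i$ against the region $\{v\le T\}$ after the cuspidal modifications. As in Zagier's argument, the truncation pieces combine so that only the constant-term integral survives, and we get
\begin{equation*}
\mathcal{V}\int_0^T\big(a_i(0,v)-\varphi_i(v)\big)v^{s-3}dv \;+\; \mathcal{V}\int_0^T \varphi_i(v)v^{s-3}dv\text{-type terms}.
\end{equation*}
The second group should be expressed explicitly through $\int^T v^{\alpha_{ik}+s-3}\log^{n}v\,dv$ and its partner $\sum_j\varphi_{ij}$-weighted integrals of $\varphi_j(v)v^{-1-s}$, coming from $\rho_{ij}(0,v,s)$. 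Concretely, we expect
\begin{equation*}
\mathcal{V}^{-1}I_i(T,s)=R_i(F,s)-\sum_j\Big[\mathbf 1_{ij}h_j(T,s)+\phi_{ij}(s)h_j(T,2-s)\Big]+o(1),
\end{equation*}
where $h_j(T,s)=\int_T^\infty\varphi_j(v)v^{s-3}dv$ is evaluated by analytic continuation. Each term $\frac{c_{jk}}{n_{jk}!}v^{\alpha_{jk}}\log^{n_{jk}}v$ integrates to an explicit polynomial in $\log T$ times $T^{\alpha_{jk}+s-2}$, with principal part $-c_{jk}(2-\alpha_{jk}-s)^{-n_{jk}-1}$. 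This is the origin of $\boldsymbol g_1$.

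The key observation is that the three-piece expression defining $I_i(T,s)$ is an entire-in-$s$ combination of $E_i$ and $E_i^T$, so it is meromorphic in $s$ with poles only at poles of $\boldsymbol\Phi$. Its $T$-dependence is explicit, and since $\partial_T I_i$ can be computed directly, one shows that $I_i(T,s)$ plus the explicit $T$-terms is independent of $T$. We then define $\boldsymbol V(s)$ as that $T$-independent quantity (minus the $\boldsymbol g_1$ principal parts), which gives \eqref{Rstarexp}. Continuation of $\boldsymbol V$ to all of $\C$ uses the pointwise bound \cref{lemma:Eis_pointwise_bound} on compact sets. Its functional equation $\boldsymbol V(s)=\boldsymbol\Phi(s)\boldsymbol V(2-s)$ follows from \eqref{eisstarfunc} and \eqref{rhofunc}, since the regularized integral is built linearly from $\boldsymbol E$ and $\boldsymbol\rho$. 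Combining this with $\boldsymbol\Phi(s)\boldsymbol\Phi(2-s)=I$ gives \eqref{Rstarfunc}. The list \eqref{possiblepoles} can then be read off from \eqref{Rstarexp}.

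For the residue at $s=2$ we assume $\alpha_{jk}\notin\{0,2\}$. Then $\boldsymbol g_1(s)$ and $\boldsymbol g_1(2-s)$ are regular at $s=2$, so the only pole comes from $\boldsymbol V$, through the simple poles of $E_i$ and $\boldsymbol\Phi$ at $2$, and it is simple. If $\alpha<2$, the terms $\phi_{ij}(s)h_j(T,2-s)$ vanish as $T\to\infty$ after taking the residue. Using \eqref{constantres} and \eqref{rhores}, we expect the residue to equal $\vol(\Gamma\backslash\mathbb H^3)^{-1}\int F\,d\mu$, which is absolutely convergent since $\alpha<2$. If $\alpha<1$ and $\alpha<\Re(s)<2-\alpha$, then $F E_i$ is integrable because $E_i\ll v^{\max(\Re s,2-\Re s)}$ at the cusps. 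Letting $T\to\infty$, every $T$-term tends to $0$, and this yields \eqref{RSidentity}.

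The main obstacle is the bookkeeping in the unfolding step: tracking the multi-cusp contributions $\phi_{ij}(s)h_j(T,2-s)$ correctly, and justifying the interchange on the non-compact pieces uniformly in $s$. For the latter, we would rely on \cref{lemma:truncated_Eis_decay} together with \cref{lemma:Maass_Selberg}.
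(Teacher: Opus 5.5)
Your architecture matches the paper's. Your $I_i(T,s)$ equals $\mathcal{V}$ times the right side of \eqref{basicstate3}, since on each cusp $FE_i-\varphi_j\rho_{ij}=FE_i^T+(F-\varphi_j)\rho_{ij}$, and in the paper's notation $g_T^{(j)}(s)=-h_j(T,s)$. Your derivations of \eqref{Rstarfunc}, of the residue at $s=2$ via \eqref{constantres} and \eqref{rhores}, and of \eqref{RSidentity} by letting $T\to\infty$ are also the paper's. The gap is the one step that carries the content. The exact identity
\[
\mathcal{V}^{-1} I_i(T,s) = R_i(F,s) + g_T^{(i)}(s) + \sum_{j=1}^h \phi_{ij}(s)\, g_T^{(j)}(2-s), \qquad \Re(s)\ \text{large},\ T>1,
\]
is only asserted (``as in Zagier's argument \dots\ we expect''), and your $\partial_T$ device cannot replace it. Differentiating in $T$ does correctly show that $I_i(T,s)-\mathcal{V}\big(g_T^{(i)}(s)+\sum_j\phi_{ij}(s)g_T^{(j)}(2-s)\big)$ is independent of $T$, because the boundary terms at height $T$ collapse to $\mathcal{V}\sum_j\varphi_j(T)\rho_{ij}(0,T,s)T^{-3}$. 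However, this only shows the regularized quantity is well defined. It says nothing about its relation to the Mellin transform $R_i(F,s)$ of $a_i(0,v)-\varphi_i(v)$, and without that identification none of the conclusions is a statement about $\boldsymbol{R}(F,s)$.

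What you need is the truncated unfolding with horoballs. The paper does this for $i=1$ and reduces general $i$ to it by replacing $F$ with $F(\sigma_i\,\cdot)$, using normality of $\Gamma$ and the cusp permutation $j\mapsto j^{(i)}$. One unfolds $\int_{\mathcal{F}(T)}FE_1\,d\mu$ to $\Gamma_\infty\backslash\mathbb{H}^3_T$. This set is the box $\{z\in\mathcal{D},\ v\le T\}$ minus the disjoint horoballs $B_{a/c}$ of radius $(2|c|^2T)^{-1}$, indexed over all $j$ by $\Gamma_\infty\backslash\Gamma\sigma_j/\Gamma_\infty$ with $c\neq 0$. The box contributes $\mathcal{V}\int_0^T a_1(0,v)v^{s-3}\,dv$. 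Its absolute convergence needs $F(w)\ll v^{-\max(0,C)}$ as $v\to 0$, which comes from $\Gamma$-invariance and \eqref{maxstate}, not from \cref{lemma:truncated_Eis_decay} or \cref{lemma:Maass_Selberg}. Each $B_{a/c}=\gamma_0\cdot\{v>T\}$ with $\gamma_0\in\Gamma\sigma_j$ refolds onto $\mathcal{C}_\infty(T)$. Summing over $a/c$ assembles these into $\int_{\mathcal{C}_\infty(T)}F(\sigma_j w)\big(E_1(\sigma_j w,s)-\mathbf{1}_{1j}v^s\big)\,d\mu$. Subtracting the constant terms $\rho_{1j}(0,v,s)$ and splitting off $\varphi_j$ then gives the displayed identity exactly, with no $o(1)$.

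There are also smaller slips:
\begin{enumerate}
\item \cref{lemma:Eis_pointwise_bound} and \cref{lemma:truncated_Eis_decay} hold only for $s\in\Omega_\varepsilon$. Continuation to all of $\C$ must instead use compactness of $\mathcal{F}(T)$ and the Bessel decay of the non-constant Fourier terms, for every $s$ away from poles of $\boldsymbol{\Phi}$.
\item At $s=2$ the term $\boldsymbol{\Phi}(s)\boldsymbol{g}_1(2-s)$ also has a simple pole, so the pole does not come only from $\boldsymbol{V}$.
\item The principal part of $h_j(T,s)$ at $s=2-\alpha_{jk}$ is $+c_{jk}(2-\alpha_{jk}-s)^{-n_{jk}-1}$, i.e.\ that of $-\boldsymbol{g}_1$, not $\boldsymbol{g}_1$.
\end{enumerate}
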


\begin{remark} \label{absconvrem}
    The proof of \cref{zagreg} will show that the integral expression for $R_{i}(F,s)$ in \eqref{RStransform1} converges absolutely for $\Re(s) > 2 + \max_{j,k} \left|\Re(\alpha_{jk})\right|$
    for each $i=1,\ldots,h$.
\end{remark}

\begin{remark}
The continuity assumption on $F$ will be used only in a soft way. It ensures that $F$ is bounded on compact subsets of $\Gamma\backslash \mathbb H^3$.
Together with $\Gamma$-invariance, it also ensures
that the cusp averages
\begin{equation*}
a_j(0,v)=\frac{1}{\mathcal{V}}\int_{\mathbb C/\Lambda} F(\sigma_j(z,v)) \, dx \, dy
\end{equation*}
are well-defined. These features will be used without further comment.
\end{remark}

For generalizations of the ideas of Zagier \cite{Zag} in other directions, see the works of Dutta Gupta \cite{DG} and Mizuno \cite{Miz}.
 
\begin{proof}[Proof of \cref{zagreg}]

Recall that each cusp $\kappa_j$ has scaling matrix $\sigma_{j} \in \operatorname{SL}_2(\mathcal{O}_{K})$ and stabilizer subgroup $\Gamma_{j}$, for each integer $1 \leq j \leq h$. By convention $\kappa_1=\infty$ and $\sigma_{1}=I$.


\medskip
\noindent\textbf{Step 1: the cusp at $\infty$.} Let $\mathcal{F}'$ denote the usual fundamental domain for $\mathrm{PSL}_2(\mathcal{O}_K)\backslash \mathbb{H}^3$, given in \eqref{eq:PSL2_fund_domain}, and let $\mathcal{F}$ denote the fundamental domain for $\Gamma \backslash \mathbb{H}^3$ given in \eqref{eq:fund_domain_def}. Set
\begin{equation*}
    \mathcal{D} = \bigcup_{\tau \in \mathcal{T}} \tau \cdot \Big\{ z \in \C : 0 \leq |{\Re(z)}|, \, \Im(z) \leq \frac{1}{2} \Big\}, 
\end{equation*}
which is a fundamental domain for $\mathbb{C}/\Lambda \simeq \Gamma_\infty\backslash\C$.  Thus $\mathcal{D} \times (0, \infty)$ is a fundamental domain for $\Gamma_\infty\backslash\mathbb{H}^3$. 

Denote the cuspidal region
\begin{equation} \label{Sinftydef}
    \mathcal{C}_{\infty}(T):= \big\{w \in \mathbb{H}^3: z \in \mathcal{D} \text{ and } v>T \big\},
\end{equation}
and more generally for each $1 \leq j \leq h$ denote
\begin{equation} \label{Skappajdef}
    \mathcal{C}_{\kappa_j}(T):=\sigma_{j} \cdot  \mathcal{C}_{\infty}(T).
\end{equation}
From now on assume that $T > 1$, so that the $\mathcal{C}_{\kappa_j}(T)$ are pairwise disjoint and contained\footnote{Since $\displaystyle\mathcal{C}_{\kappa_j}(T) \subset \bigcup_{\tau\in \mathcal{T}} \sigma_j \tau \cdot \mathcal{F}'$ for $T > 1$.} in $\mathcal{F}$. Consider the truncated domain 
\begin{equation} \label{DTdef}
    \mathcal{F}(T) := \mathcal{F}-\bigsqcup_{j=1}^{h} \mathcal{C}_{\kappa_j}(T).
\end{equation}
Then $\mathcal{F}(T)$ is a fundamental domain for the action of $\Gamma$ on $\mathbb{H}^3_T$, where
\begin{align*}
    \mathbb{H}_{T}^3 := \bigcup_{\gamma \in \Gamma} \gamma \cdot \mathcal{F}(T) = \Big\{w \in \mathbb{H}^3: \max_{\substack{1 \leq j \leq h \\ \gamma \in \Gamma}} v(\sigma_{j}^{-1} \gamma w ) \leq T \Big\}.
\end{align*}
Note that
\begin{equation*}
    \mathbb{H}_T^3 = \big\{w \in \mathbb{H}^3: v \leq T \big\} - \bigsqcup_{j=1}^h \bigsqcup_{\substack{a, c \in \mathcal{O}_{K},\, c \neq 0 \\ \pmatrix {a} {*} {c} {*} \in \Gamma \sigma_{j} / \Gamma_{\infty}  }} B_{a/c}, 
\end{equation*}
where $B_{a/c}$ is an open ball in $\mathbb{H}^3$ tangent to $\frac{a}{c} \in K$ with radius $(2 |c|^2 T)^{-1}$. Thus 
\begin{equation} \label{truncateident}
    \Gamma_{\infty} \backslash \mathbb{H}_T^3 = \big\{w \in \mathbb{H}^3: z \in \mathcal{D} \text{ and } v \leq T \big\}
    - \bigsqcup_{j=1}^h \bigsqcup_{\substack{a,c \in \mathcal{O}_{K}, \, c \neq 0 \\ \pmatrix {a} {*} {c} {*} \in \Gamma_{\infty} \backslash \Gamma \sigma_{j} /\Gamma_{\infty}  }} B_{a/c}. 
\end{equation}

Recall that for any integer $1\leq j\leq h$ and any $C>\alpha:=\max_{j,k}  \Re(\alpha_{jk})$, we have
\begin{equation} \label{inftyasymp}
    F(\sigma_{j} w) = O_F\big(v^{C}\big) \quad \text{as} \quad v \to \infty.
\end{equation} 
Note also that for any $w \in \mathbb{H}^3$,
\begin{equation} \label{maxstate}
    \max_{\delta \in \SL_2(\mathcal{O}_K)} v(\delta w)=\max_{\substack{\pmatrix {*} {*} {c} {d} \in \SL_2(\mathcal{O}_K)}} \frac{v}{|cz+d|^2+|c|^2 v^2} \leq \max \Big (v,\frac{1}{v}  \Big ).
\end{equation}
Indeed, \eqref{maxstate} follows since $c=0$ implies that $|c z+d|^2+|c|^2 v^2 = |d|^2 \geq 1$, while $c \neq 0$ implies that $|c z+d|^2+|c|^2 v^2 \geq |c|^2 v^2 \geq v^2$. Furthermore, using \eqref{eq:fund_domain_def} we observe that for each $w \in \mathbb{H}^3$ there exist $\gamma\in \Gamma$, an integer $1\leq j \leq h$, and $\tau \in \mathcal{T} \subset \SL_2(\mathcal{O}_K)_\infty$ such that $w \in \gamma^{-1} \sigma_j \tau \cdot \mathcal{F}'$, hence $v(\sigma_j^{-1} \gamma w) \gg 1$. Since $F$ is $\Gamma$-invariant, we deduce from \eqref{inftyasymp} and \eqref{maxstate} that 
\begin{equation} \label{zeroasymp}
    F(w) = F(\sigma_j (\sigma_j^{-1}\gamma w)) =  O_F\big(v^{-\max(0, C)}\big) \quad \text{as} \quad v \to 0. 
\end{equation}

It follows from \eqref{zeroasymp} that the integral 
\begin{equation*}
    \int_{0}^{T} \int_{\mathcal{D}} F(w) v^{s} \, d\mu(w)
\end{equation*}
converges absolutely and locally uniformly in the half-plane $\Re(s)>2+\max(0,C)$. Thus unfolding gives
\begin{equation} \label{RSidentT}
    \int_{\Gamma \backslash \mathbb{H}_T^3} F(w) E_{1}(w,s)\, d \mu(w)=\int_{\Gamma_{\infty} \backslash \mathbb{H}_T^3 } F(w) v^s\, d \mu(w)
\end{equation}
for $\Re(s)>2+\max(0,C)$. Then applying \eqref{truncateident} yields the identity
\begin{equation}\label{intid}
\begin{aligned}
    \int_{\mathcal{F}(T) } F(w) E_{1}(w,s) \, d \mu(w) =\ & \int_{0}^{T} \int_{\mathcal{D}} F(w) v^{s} \, d\mu(w) \\
    &- \sum_{j=1}^{h} \sumtwo_{\substack{a,c \in \mathcal{O}_{K}, \, c \neq 0 \\  \pmatrix {a} {*} {c} {*} \in \Gamma_{\infty} \backslash \Gamma \sigma_{j} / \Gamma_{\infty} }} 
    \int_{B_{a/c}} F(w) v^{s} \, d\mu(w)
\end{aligned}
\end{equation}
for $\Re(s)>2+\max(0,C)$. The integral over $B_{a/c}$ in \eqref{intid} can be transformed by fixing an element $\gamma_0= \pmatrix {a} {b} {c} {d} \in \Gamma \sigma_{j}$, so that $B_{a/c} = \gamma_0 \cdot \big\{w \in \mathbb{H}^3: v>T \big\}$. By the $\Gamma$-invariance of $F$, normality of $\Gamma \subseteq \SL_2(\mathcal{O}_K)$, and $\SL_2(K)$-invariance of $d \mu$ we deduce that 
\begin{align} \label{sphereidentity}
    \int_{B_{a/c}} F(w) v^{s} \, d\mu(w) &=\int_{T}^{\infty} \int_{\mathbb{C}} F(\sigma_{j} w) v(\gamma_0 w)^s \, d \mu(w) \nonumber \\
    &=\int_{\mathcal{C}_{\infty}(T)} F(\sigma_{j} w) \sum_{u \in \Gamma_{\infty} } v(\gamma_0 u w)^s \, d \mu(w) \nonumber \\
    &=\int_{\mathcal{C}_{\infty}(T)} F(\sigma_{j} w) \sum_{\gamma=\pmatrix {a} {*} {c} {*} \in \Gamma \sigma_{j} } 
    v(\gamma w)^s \, d \mu(w).
\end{align}
We substitute \eqref{sphereidentity} into \eqref{intid}, and then interchange the order of summation and integration by absolute convergence. Then the double sum over $a$ and $c$ in \eqref{intid} is equal to 
\begin{align} \label{intintermed2}
    &\int_{\mathcal{C}_{\infty}(T)} F(\sigma_{j} w) \sum_{\substack{\gamma \in \Gamma_{\infty} \backslash \Gamma \\ \gamma \kappa_{j} \neq \infty } } v(\gamma \sigma_{j} w)^{s} \, d \mu(w) \nonumber \\
    = \ & \int_{\mathcal{C}_{\infty}(T)} F(\sigma_{j} w) \cdot \big (E_{1}(\sigma_{j}w,s) - \mathbf{1}_{1 j} \cdot v^s \big) \, d\mu(w).
\end{align}
We perform the integral over $x$ and $y$ in the first term on the right side of \eqref{intid}, and then substitute \eqref{intintermed2} into that display for the double sum over $a$ and $c$. We then interchange the sum and integral by absolute convergence to deduce that 
\begin{align*}
    &\int_{\mathcal{F}(T) } F(w) E_{1}(w,s) \, d \mu(w) \\
    = \ & \mathcal{V} \cdot \int_{0}^{T} a_{1}(0,v) v^{s-3} \, dv -\int_{\mathcal{C}_{\infty}(T)} \sum_{j=1}^h F(\sigma_{j} w) \cdot \big(E_{1}(\sigma_{j}w,s) -\mathbf{1}_{1 j} \cdot v^s \big) \,  d \mu(w)
\end{align*}
for $\Re(s)>2+\max(0,C)$. Using the constant term \eqref{constterm} of the Fourier expansion of $E_{1}(\sigma_{j}w,s)$, we obtain 
\begin{align} \label{rearrange2}
    & \mathcal{V} \cdot  \Big(  \int_{0}^T a_{1}(0,v) v^{s-3} \, dv - \sum_{j=1}^{h} \phi_{1 j}(s) \int_{T}^{\infty} a_{j}(0,v) v^{-s-1} \, dv \Big) \\
    = \ & \int_{\mathcal{F}(T)} F(w) E_{1}(w,s) \, d \mu(w)+ \int_{\mathcal{C}_{\infty}(T)} \sum_{j=1}^{h} F(\sigma_{j} w) \cdot \big(E_{1}(\sigma_{j} w,s)-\rho_{1 j}(0,v,s) \big) \, d \mu(w) \nonumber
\end{align}  
for $\Re(s)>2+\max(0,C)$, where $\phi_{ij}(s) := K(0, s) \varphi_{ij}(0, s)$ are entries of $\mathbf{\Phi}(s)$, by \eqref{Phidef}.

We now evaluate the left side of \eqref{rearrange2}. Let $\boldsymbol{g}_T(s) := \big( g^{(j)}_T(s) \big)^{\top}_{1 \leq j \leq h }$, where
\begin{align} \label{gTcompute}
    g^{(j)}_T(s) := \ & \sum_{k=1}^{r} \frac{c_{jk}}{n_{jk}!} \cdot \Big(\frac{\partial}{\partial s}\Big)^{n_{jk}} \Big( \frac{T^{s+\alpha_{jk}-2}}{s+\alpha_{jk}-2} \Big)  \nonumber \\
    = \ & \sum_{k=1}^{r} c_{jk} \sum_{m=0}^{n_{jk}} \frac{(-1)^{n_{jk}-m}}{m!} \cdot \frac{T^{s+\alpha_{jk}-2} \cdot \log^m(T)}{(s+\alpha_{jk}-2)^{n_{jk}-m+1}}.
\end{align}
A direct computation shows that for $\Re(s) > 2 + \max_{j,k} \left|\Re(\alpha_{jk})\right|$ we have that
\begin{align} \label{a1}
    \int_{0}^T a_{1}(0,v) v^{s-3} \, dv &= \int_{0}^{T} \big(a_{1}(0,v)-\varphi_{1}(v) \big) v^{s-3} \, dv +\sum_{k=1}^{r} \frac{c_{1k}}{n_{1k}!} \int_{0}^{T} v^{s+\alpha_{1k}-3} \cdot \log^{n_{1k}}(v) \, dv \nonumber \\
    &=R_{1}(F,s) - \int_{T}^{\infty} \big(a_{1}(0,v) - \varphi_{1}(v) \big) v^{s-3} \, dv + g^{(1)}_{T}(s),
\end{align}
since by \eqref{zeroasymp} each integral is absolutely convergent in this half-plane (cf.\ \cref{absconvrem} in the case $i=1$). Similarly,
\begin{equation} \label{a2}
    \int_{T}^{\infty} a_{j}(0,v) v^{-s-1} \, dv = \int_T^{\infty} \big(a_{j}(0,v)-\varphi_{j}(v) \big) v^{-s-1} \, dv - g^{(j)}_{T}(2-s)
\end{equation}
for $\Re(s)>\max_{j, k} \left|\Re(\alpha_{jk})\right|$.
Substitute \eqref{a1} and \eqref{a2} into \eqref{rearrange2} to deduce that 
\begin{align} 
    &\mathcal{V} \cdot \Big(R_{1}(F,s)+ g^{(1)}_{T}(s)+\sum_{j=1}^{h} \phi_{1 j}(s) g^{(j)}_{T}(2-s) \Big) = \int_{\mathcal{F}(T)} F(w) E_{1}(w,s) \, d \mu(w)  \nonumber \\
    &+ \int_{\mathcal{C}_{\infty}(T)}   \sum_{j=1}^{h} F(\sigma_{j} w) \cdot \big(E_{1}( \sigma_{j}w,s)-\rho_{1 j}(0,v,s) \big) \, d \mu(w) \label{simplify1} \\
    &+ \mathcal{V} \cdot \Big( \int_{T}^{\infty} \big(a_{1}(0,v)-\varphi_{1}(v) \big) v^{s-3} \, dv + \sum_{j=1}^{h} \phi_{1 j}(s) \int_{T}^{\infty} \big(a_{j}(0,v)-\varphi_{j}(v) \big) v^{-s-1}\, dv \Big), \qquad \label{simplify3a}
\end{align}
for $\Re(s)>2 + \max_{j,k} \left|\Re(\alpha_{jk})\right|$. By \eqref{constterm}, the term in \eqref{simplify3a} is equal to 
\begin{equation*}
    \int_{\mathcal{C}_{\infty}(T)} \sum_{j=1}^h \big( a_{j}(0,v)-\varphi_{j}(v) \big) \cdot \rho_{1 j}(0,v,s) \, d \mu(w).
\end{equation*}
For the term in \eqref{simplify1}, note that the function $\rho_{1 j}(0,v,s)$
is independent of $z=x+iy$, hence its integral against $F(\sigma_{j} w)$
is the same as against $a_{j}(0,v)$. Using these observations we obtain 
\begin{equation}\label{simplify3}
\begin{aligned}
    \mathcal{V} \cdot \Big(R_{1}(F,s)+ g^{(1)}_{T}(s)+\sum_{j=1}^{h} \phi_{1 j}(s) g^{(j)}_{T}(2-s) \Big) =\int_{\mathcal{F}(T)} F(w) E_{1}(w,s) \, d \mu(w)  \\
    +\int_{\mathcal{C}_{\infty}(T)}  \sum_{j=1}^{h}
    \Big(F(\sigma_{j} w) E_{1}( \sigma_{j}w,s)-\varphi_{j}(v) \rho_{1 j}(0,v,s) \Big) \, d \mu(w)
\end{aligned}
\end{equation}
for $\Re(s)>2 + \max_{j,k} \left|\Re(\alpha_{jk})\right|$. Before proceeding, we extend this to all cusps.


\medskip
\noindent\textbf{Step 2: arbitrary cusps.} We will prove a version of the identity \eqref{simplify3} with $R_{1}(F,s)$ replaced by $R_{i}(F,s)$ for each integer $1\leq i \leq h$. For $w \in \mathbb{H}^3$ set
\begin{equation} \label{fidef}
    F_{i}(w):=F(\sigma_{i} w).
\end{equation}
Note that $F_i$ is invariant under $\sigma_{i}^{-1} \Gamma \sigma_{i} = \Gamma$ since $F$ is $\Gamma$-invariant, $\sigma_i \in \SL_2(\mathcal{O}_K)$, and $\Gamma \subseteq \SL_2(\mathcal{O}_K)$ is normal. For each pair of integers $(i,j)$ satisfying $1\leq i, j \leq h$, there is a unique integer $1\leq j^{(i)} \leq h$ such that $\sigma_i \sigma_j \infty \in \Gamma \kappa_{j^{(i)}}$, and these are distinct for fixed $i$ and distinct $j$, since $\gamma_1 \sigma_i \cdot \kappa_{j_1} = \gamma_2 \sigma_i \cdot \kappa_{j_2}$ implies $\kappa_{j_1} = \sigma_i^{-1} \gamma_1^{-1} \gamma_2 \sigma_i \cdot \kappa_{j_2} \in \Gamma \kappa_{j_2}$. Thus there exist $\gamma_j^i \in \Gamma$ and $\beta_j^i \in \SL_2(\mathcal{O}_K)_{\infty}$ such that $\sigma_i \sigma_j = \gamma_j^i \sigma_{j^{(i)}} \beta_{j}^{i}$, and we may take $\beta_1^i = I$. Observe that
\begin{equation}\label{vbeta}
    v(\beta_j^i w) = v
\end{equation}
for all $w \in \mathbb{H}^3$, which by \eqref{cuspasymp} implies for any $N \in \mathbb{N}$ that
\begin{equation} \label{varphident}
    F_{i}(\sigma_{j} w)= F(\sigma_i \sigma_j w) = F(\sigma_{j^{(i)}} \beta_j^i w) = \varphi_{j^{(i)}}(v)+O_{F,N}(v^{-N}) \quad \text{as} \quad v \to \infty.
\end{equation} 
Clearly $1^{(i)} = i$ and
\begin{equation} \label{fiobs}
    F_{i}(\sigma_1 w)=F(\sigma_{i} w) =  \sum_{\delta \in \Lambda^{*}} a_{i}(\delta,v) \check{e}(\delta z),
\end{equation}
so it follows from the definition \eqref{RStransform1} that
\begin{equation} \label{Rid}
    R_{1}(F_{i},s) = R_{i}(F,s),
\end{equation}
and by our previous step this converges absolutely for $\Re(s)>2+\max_{j, k} \left|\Re(\alpha_{jk})\right|$.

Moreover, for $w \in \mathbb{H}^3$ and $\Re(s)>2$ we have
\begin{align} \label{eisensteinid}
    E_{1}(\sigma_{j} w,s)& = \sum_{\gamma \in \Gamma_{\infty} \backslash \Gamma } v(\gamma \sigma_i^{-1} \gamma_j^i \sigma_{j^{(i)}} \beta_j^i w )^{s} = \sum_{\gamma \in \Gamma_{\infty} \backslash \Gamma } v(\sigma_{i}^{-1} \sigma_{i} \gamma \sigma_{i}^{-1} \gamma_j^i \sigma_{j^{(i)}} \beta_j^i w )^{s} \nonumber \\
    &=\sum_{\gamma \in \Gamma_{i} \backslash \Gamma } v(\sigma_{i}^{-1} \gamma \sigma_{j^{(i)}} \beta_j^i w )^{s} =E_{i}(\sigma_{j^{(i)}}\beta_j^i w,s),
\end{align}
and this can be extended to all $s \in \mathbb{C}$ by meromorphic continuation.
It follows from the preceding identity and \eqref{vbeta} that, for all $v>0$,
as meromorphic functions of $s \in \mathbb{C}$, the constant terms $\rho_{ij}(0,v,s)$
of $E_i(\sigma_j w,s)$ satisfy
\begin{equation} \label{rhoident}
\rho_{1 j}(0,v,s)= \rho_{i j^{(i)}}(0,v,s),
\end{equation}
so in particular $\phi_{1j}(s) = \phi_{i j^{(i)}}(s)$. Using \eqref{varphident}, \eqref{Rid}, and \eqref{rhoident}, we deduce from an application of \eqref{simplify3} with $F \mapsto F_{i}$ that  we have
\begin{equation} \label{basicstate2}
\begin{aligned}
    \mathcal{V} \cdot \Big(R_{i}(F,s)+ g^{(i)}_{T}(s)+\sum_{j=1}^{h} \phi_{i j}(s) g^{(j)}_{T}(2-s) \Big) = \int_{\mathcal{F}(T)} F(\sigma_i w) E_{i}(\sigma_i w,s) \, d \mu(w)  \\
    +\int_{\mathcal{C}_{\infty}(T)}  \sum_{j=1}^{h}
    \Big(F(\sigma_{j^{(i)}} \beta_j^i w) E_{i}( \sigma_{j^{(i)}} \beta_j^i w, s)- \varphi_{j^{(i)}}(v) \rho_{i j^{(i)}}(0,v,s) \Big) \, d \mu(w)
\end{aligned}
\end{equation}
for $\Re(s)>2 + \max_{j,k} \left|\Re(\alpha_{jk})\right|$.
Observe that $\sigma_i\cdot \mathcal{F}(T)$ is a fundamental domain for the action of $\sigma_i\Gamma\sigma_i^{-1} = \Gamma$ on $\sigma_i \cdot \mathbb{H}^3_T$. Furthermore, $w \in \sigma_i \cdot \mathbb{H}^3_T$ is equivalent to
\begin{equation*}
    T \geq \max_{\substack{1 \leq j \leq h \\ \gamma \in \Gamma}} v(\sigma_{j}^{-1} \gamma \sigma_i^{-1} w ) = \max_{\substack{1 \leq j \leq h \\ \gamma \in \Gamma}} v(\sigma_{j}^{-1}\sigma_i^{-1}  \gamma w ) = \max_{\substack{1 \leq j \leq h \\ \gamma \in \Gamma}} v(\sigma_{j^{(i)}}^{-1}  \gamma w ) = \max_{\substack{1 \leq j \leq h \\ \gamma \in \Gamma}} v(\sigma_{j}^{-1}  \gamma w ),
\end{equation*}
hence $\sigma_i \cdot \mathbb{H}^3_T = \mathbb{H}^3_T$. Therefore the first integral on the right side of \eqref{basicstate2} is equal to
\begin{equation}\label{mainterminti}
    \int_{\sigma_i \cdot \mathcal{F}(T)} F(w) E_{i}(w,s) \, d \mu(w) = \int_{\mathcal{F}(T)} F(w) E_{i}(w,s) \, d \mu(w).
\end{equation}
Similarly, $\beta_{j}^i \cdot \mathcal{C}_\infty(T)$ is a fundamental domain for the action of $\beta_{j}^i\, \Gamma_\infty (\beta_{j}^i)^{-1} = \Gamma_{\infty}$ on $\beta_{j}^i \cdot \{w \in \mathbb{H}^3 : v > T\} = \{w \in \mathbb{H}^3 : v > T\}$. Using the preceding fact, and also that
\begin{equation*}
v((\beta_{j}^i)^{-1} w)=v
\end{equation*}
for all $w \in \mathbb{H}^3$, we deduce that the second term on the right side of \eqref{basicstate2} is equal to
\begin{equation}\label{mainsuminti}
    \int_{\mathcal{C}_{\infty}(T)}  \sum_{j=1}^{h}
    \Big(F(\sigma_{j} w) E_{i}( \sigma_{j}  w, s)- \varphi_{j}(v) \rho_{i j}(0,v,s) \Big) \, d \mu(w).
\end{equation}


\medskip
\noindent\textbf{Step 3: normalized matrix identity.} We now convert our key identity to matrix form. 
Let $\boldsymbol{e}_j \in \mathbb C^h$ denote the $j$-th standard column basis vector.
Starting from \eqref{basicstate2}, and then using \eqref{mainterminti} and \eqref{mainsuminti}, we obtain 
\begin{align} \label{basicstate3}
    &\boldsymbol{R}(F,s) + \boldsymbol{g}_T(s) + \boldsymbol{\Phi}(s) \cdot \boldsymbol{g}_T(2-s) = \frac{1}{\mathcal{V}} \int_{\mathcal{F}(T) } F(w) \cdot \boldsymbol{E}(w,s) \, d \mu(w) \nonumber \\
    &+\frac{1}{\mathcal{V}}\int_{\mathcal{C}_{\infty}(T)} \sum_{j=1}^{h} \Big(F(\sigma_{j} w) \cdot \boldsymbol{E}(\sigma_{j}w,s) -\varphi_{j}(v) \cdot \boldsymbol{\rho}(0,v,s) \cdot \boldsymbol{e}_j  \Big) \, d \mu(w)
\end{align}
for $\Re(s)>2 + \max_{j,k} \left|\Re(\alpha_{jk})\right|$. 


\medskip
\noindent\textbf{Step 4: analytic consequences.} Note that $\mathcal{F}(T)$ is compact and the second integrand of \eqref{basicstate3} is of rapid decay $O_{F, s, N}(v^{-N})$ for $s \in \C$ away from the poles of $\boldsymbol{E}(\sigma_{j}w,s)$ for each integer $1 \leq j \leq h$. The latter follows from the decay of the $K$-Bessel function as in 
\eqref{eq:Bessel_bound}. Therefore both integrals on the right side of \eqref{basicstate3} converge absolutely and locally uniformly for such $s \in \C$.

Recall from \cref{trivial_char_section} that any poles of (the entries of) $\boldsymbol{E}(\sigma_{j}w,s)$ must occur as poles of (the entries of) $\boldsymbol{\Phi}(s)$, and that $\boldsymbol{E}(\sigma_{j}w,s)$ has no poles on $\Re(s)=1$. We deduce that both integrals on the right side of \eqref{basicstate3} converge away from
\begin{equation}\label{convergeaway}
    s_0 \in \big\{\text{poles of } \boldsymbol{\Phi}(s)\big\}.
\end{equation} 
Also note that each entry of $\boldsymbol{g}_{T}(s)$ is meromorphic on $\mathbb{C}$ by  \eqref{gTcompute}. Thus \eqref{basicstate3} gives the meromorphic continuation of $\boldsymbol{R}(F,s)$ to all of $\mathbb{C}$.

Consulting \eqref{eisstarfunc} and \eqref{rhofunc}, we deduce that 
\begin{equation}\label{inv1}
\begin{aligned} 
    & \boldsymbol{g}_T(s)+ \boldsymbol{\Phi}(s) \cdot \boldsymbol{g}_T(2-s)   \\ 
    = \ & \boldsymbol{\Phi}(s) \cdot  \Big( \boldsymbol{g}_T(2-s) + \boldsymbol{\Phi}(2-s) \cdot \boldsymbol{g}_T(s) \Big)
\end{aligned}
\end{equation}
and 
\begin{equation}\label{inv2}
\begin{aligned} 
    & F(\sigma_{j} w) \cdot \boldsymbol{E}(\sigma_{j}w,s)-\varphi_{j}(v) \cdot \boldsymbol{\rho}(0,v,s) \cdot \boldsymbol{e}_j \\
    = \ & \boldsymbol{\Phi}(s) \cdot \Big(F(\sigma_{j} w) \cdot \boldsymbol{E}(\sigma_{j}w,2-s) -\varphi_{j}(v) \cdot \boldsymbol{\rho}(0,v,2-s) \cdot \boldsymbol{e}_j \Big).
\end{aligned}
\end{equation}
Therefore we obtain the functional equation \eqref{Rstarfunc} from \eqref{eisstarfunc}, \eqref{basicstate3}, \eqref{inv1}, and \eqref{inv2}.

We infer from \eqref{gTcompute} that 
\begin{align} \label{g1compute}
    g^{(j)}_1(s)=-\sum_{k=1}^{r} \frac{c_{jk}}{(2-\alpha_{jk}-s)^{n_{jk}+1}},
\end{align}
and that each entry of the vector
\begin{equation} \label{entire}
    \boldsymbol{g}_{T}(s)-\boldsymbol{g}_{1}(s)= \Big( \sum_{k=1}^{r} \frac{c_{jk}}{n_{jk} !} \cdot \Big(\frac{\partial}{\partial s}\Big)^{n_{jk}} \Big( \frac{T^{s+\alpha_{jk}-2}-1}{s+\alpha_{jk}-2} \Big) \Big)^{\top}_{1 \leq j \leq h}
\end{equation}
is an entire function of $s$. Therefore the statement \eqref{possiblepoles} about the potential location of poles of $\boldsymbol{R}(F, s)$ follows from \eqref{basicstate3}, \eqref{convergeaway}, and \eqref{g1compute}.
Since the entries of $\boldsymbol{g}_T(s)-\boldsymbol{g}_1(s)$ are entire, we also obtain \eqref{Rstarexp} from \eqref{basicstate3}. 

Furthermore, we observe that if $\alpha_{jk} \not\in \{0, 2\}$ for all integers $1\leq j\leq h$ and $1\leq k \leq r$, then \eqref{basicstate3} shows that each entry of $\boldsymbol{R}(F, s)$ has at most a simple pole at $s=2$, since the same is true for the entries of $\boldsymbol{\Phi}(s)$, $\boldsymbol{E}(\sigma_jw, s)$, and $\boldsymbol{\rho}(0, v, s)$.

We now compute residues at $s=2$ in \eqref{basicstate3} under the assumption that $\alpha_{jk} \not \in \{0,2\}$ for all integers $1\leq j\leq h$ and $1\leq k \leq r$. 
Using \eqref{constantres} and \eqref{rhores}, we have
\begin{equation*}
    \Res_{s=2} \boldsymbol{E}(\sigma_{j} w,s)=\frac{\mathcal{V}}{\vol(\Gamma \backslash \mathbb{H}^3)} \cdot \boldsymbol{1} = \Res_{s=2} \boldsymbol{\rho}(0,v,s)
    \cdot \boldsymbol{e}_j,
\end{equation*}
and thus 
\begin{align}
    & \Res_{s=2} \boldsymbol{R}(F,s) = -\Res_{s=2} \boldsymbol{g}_T(s) -\Res_{s=2} \big( \boldsymbol{\Phi}(s) \cdot
    \boldsymbol{g}_T(2-s) \big)  \label{res2a} \\
    &+ \frac{1}{\vol(\Gamma \backslash \mathbb{H}^3)} \Big( \int_{\mathcal{F}(T)} F(w) \, d \mu(w)  
    + \int_{\mathcal{C}_{\infty}(T)} \sum_{j=1}^{h} \Big( F(\sigma_{j} w)- \varphi_{j}(v) \Big) \, d \mu(w) \Big) \cdot \boldsymbol{1}. \qquad \label{res2b}
\end{align}
The first term on the right side of \eqref{res2a} is independent of $T$, since $\boldsymbol{g}_T(s)-\boldsymbol{g}_1(s)$ is entire, and also vanishes under our hypothesis that $\alpha_{jk} \neq 0$ for all integers $1\leq j \leq h$ and $1 \leq k \leq r$. By \eqref{gTcompute}, the second term on the right side of \eqref{res2a} is a vector whose entries are finite $\mathbb{C}$-linear combinations of terms of the form $T^{\alpha_{jk}-2} \log^{m}(T)$. If $\alpha<2$ then each of these terms is $o(1)$ as $T \to \infty$, so letting $T \to \infty$ we deduce \eqref{Rstares2}. 
The identity \eqref{RSidentity} for $\alpha < 1$, $\alpha<\Re(s)<2-\alpha$ and $s$ not a pole of $\boldsymbol{\Phi}(\cdot)$,
follows from taking the limit $T \to \infty$ in \eqref{basicstate3} and observing by \eqref{gTcompute} that 
\begin{equation*}
    \lim_{T \to \infty} \boldsymbol{g}_{T}(s)=\lim_{T \to \infty} \boldsymbol{g}_{T}(2-s)=\boldsymbol{0}.
\end{equation*}
\end{proof}


We will need uniform control on the growth of $R_j(F, s)$ for $s$ in fixed vertical strips, which is given in the following result. 

\begin{prop}[Growth estimate for Rankin--Selberg]\label{prop:RS_growth_estimate}
    Use the same notation and hypotheses as in \cref{zagreg}. Let $\varepsilon > 0$, and let $\Omega_{\varepsilon}$ be as in \eqref{Omegadef}. Assume that there is a parameter $M \geq 1$ such that for each integer $1\leq j \leq h$ we have
    \begin{equation}\label{eq:improved_F_asympt}
        \int_{\C/\Lambda} |{F(\sigma_j (z, v)) - \varphi_j(v)}| \, dx \, dy \ll_{\Gamma, \varepsilon} M v^{-100} \qquad \text{ for } \qquad v \geq \varepsilon.
    \end{equation}
Let $A \geq 1$ be such that $|{\Re(\alpha_{jk})}| \leq A$ and $n_{jk}\leq A$ for all $j$ and $k$. Then for every integer $1\leq i \leq h$, we have
\begin{equation}\label{eq:RS_growth_estimate}
R_i(F, s) \ll_{\Gamma, A, \varepsilon,r} (M+\gamma) \cdot |{s}|^{2A + 100}
\end{equation}
uniformly for 
\begin{equation}
s \in \Omega_{\varepsilon} \cap \{s \in \mathbb{C}: \min_{j,k} \big( |s-\alpha_{jk}|,\ |s-(2-\alpha_{jk})| \big )\geq\varepsilon \}=:\mathcal{S}_{\varepsilon} ,
\end{equation}
where $\displaystyle\gamma :=  \max_{j, k} |c_{jk}|$.
\end{prop}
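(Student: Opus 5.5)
Starting point: the identity \eqref{basicstate3} from the proof of \cref{zagreg}, which holds for all $s$ away from the poles of $\boldsymbol{\Phi}(s)$ by meromorphic continuation. We take it with a truncation height $T$ depending on $s$, namely $T := \max(2, C_\Gamma |s|^2)$ with $C_\Gamma$ as in \cref{lemma:truncated_Eis_decay}. Then $R_i(F,s)$ is the sum of three contributions, each to be bounded separately for $s \in \mathcal{S}_\varepsilon$:
\begin{enumerate}[label=(\roman*)]
\item the $\boldsymbol{g}_T$ terms, namely $g^{(i)}_T(s) + \sum_j \phi_{ij}(s) g^{(j)}_T(2-s)$;
\item the compact-part integral $\mathcal{V}^{-1}\int_{\mathcal{F}(T)} F(w) E_i(w,s)\,d\mu(w)$;
\item the cuspidal integrals over $\mathcal{C}_\infty(T)$.
\end{enumerate}

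For (i) we use \eqref{gTcompute}. Since $s$ stays at distance $\ge\varepsilon$ from $2-\alpha_{jk}$ and from $\alpha_{jk}$, the denominators $(s+\alpha_{jk}-2)^{n_{jk}-m+1}$ and $(\alpha_{jk}-s)^{\cdots}$ are $\gg_{\varepsilon,A} 1$. The numerators are at most $|c_{jk}|\, T^{|\Re s| + A}\log^A T$. Because $1+\varepsilon\le \Re(s)\le 10$ and $|\Re \alpha_{jk}|\le A$, this gives
\[
\ll \gamma\, |s|^{2(A+10)+\varepsilon}.
\]
The factor $\phi_{ij}(s)$ is $\ll 1$ by \cref{omegalem}. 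The exponent bookkeeping needs to be arranged so that it fits within $|s|^{2A+100}$. The $T^{\Re s - 2}$ factors are harmless, since $\Re(s)\le 10$ gives at most a $T^{8}$ loss, which is $|s|^{16}$.

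For (ii), on $\mathcal{F}(T)$ we have $v_{\max}(w)\le T$. \cref{lemma:Eis_pointwise_bound} then gives $E_i(w,s)\ll T^{10}|s|^{3/2}$. For $F$ on $\mathcal{F}(T)$ we bound it by splitting into $\varphi_j$ plus the remainder on the cusp pieces, using \eqref{eq:improved_F_asympt}. On a region $\sigma_j\tau\mathcal{F}'$ with $v(w')\in[\varepsilon',T]$, the $L^1$ norm in $x,y$ of $F-\varphi_j$ is $\ll M$, and $\varphi_j(v)\ll \gamma\, v^{A}\log^A v$. Integrating against $d\mu = dx\,dy\,dv/v^3$ over $v\ge \sqrt3/2$ then gives
\[
\ll (M+\gamma T^{A+1})\, T^{10}|s|^{3/2},
\]
which is polynomial in $|s|$. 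A cleaner route is to write $\int_{\mathcal{F}(T)} F E_i = \int (F-\varphi)E_i + \int \varphi E_i$, bounding the first with the sup of $E_i$ and the $L^1$ hypothesis.

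For (iii) we split the integrand as
\[
(F(\sigma_jw)-\varphi_j(v))\,E_i(\sigma_jw,s) + \varphi_j(v)\bigl(E_i(\sigma_jw,s)-\rho_{ij}(0,v,s)\bigr).
\]
\begin{itemize}
\item The first piece is bounded using the pointwise bound $E_i(\sigma_jw,s)\ll v^{10}|s|^{3/2}$, valid since $v\ge T\gg|s|^2$ so that $v_{\max}\asymp v$. Combined with \eqref{eq:improved_F_asympt}, the $v^{-100}$ decay makes $\int_T^\infty M v^{-100+10-3}dv$ converge, giving $\ll M|s|^{3/2}$.
\item In the second piece the integrand is exactly $\varphi_j(v)E_i^{T}(\sigma_jw,s)$. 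By Cauchy--Schwarz in $(x,y)$ and \cref{lemma:truncated_Eis_decay} with $A=T$ (so $\widetilde v=T$), its $(x,y)$-integral is $\ll |\varphi_j(v)|\, e^{-m_\Gamma(v-T)/2}T^{10}|s|^{1/2}$. Integrating in $v$ gives $\ll \gamma\, T^{A+10}\log^A T\,|s|^{1/2}$.
\end{itemize}

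The main obstacle is purely bookkeeping: making all powers of $T\asymp|s|^2$ combine to at most $|s|^{2A+100}$. The decisive saving is that the dependence is only through $T^{A}$ from $\varphi_j$ and $T^{\Re s}\le T^{10}$ from $E_i$, and $2A+20+O(1)\le 2A+100$. If the estimate for $\int_{\mathcal{F}(T)}\varphi_j E_i$ overshoots, we will instead apply Cauchy--Schwarz with \cref{lemma:Maass_Selberg} to $E_i^{T}$ on the compact part, which saves roughly a factor $T^{\Re s}$.
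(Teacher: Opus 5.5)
Your proposal is correct and uses the same ingredients as the paper: the identity \eqref{basicstate3}, \cref{omegalem} for $\boldsymbol{\Phi}(s)$, \cref{lemma:Eis_pointwise_bound}, the splitting $(F-\varphi_j)E_i+\varphi_jE_i^{T}$ with the hypothesis \eqref{eq:improved_F_asympt}, and \cref{lemma:truncated_Eis_decay} via Cauchy--Schwarz in $z$. The only difference is bookkeeping: the paper takes $T=1$, so the $\boldsymbol{g}_1$ terms are just $O(\gamma)$, and splits the cuspidal $v$-integral at $D_\Gamma|s|^2$, using the pointwise bound below that height; you instead take $T\asymp|s|^2$, so that middle range lies in $\mathcal{F}(T)$ and the $\boldsymbol{g}_T$ terms absorb the resulting powers of $T$, and both versions give exponents well below $2A+100$.
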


\begin{proof}
    Denote as before $\alpha:=\max_{j,k} \Re(\alpha_{jk})$. Our starting point is \eqref{basicstate3}, where we choose $T = 1$ and take the $i$-th row. After applying the triangle inequality, the contribution of the terms $|{\boldsymbol{g}_1(s)}| + |{\boldsymbol{\Phi}(s) \cdot \boldsymbol{g}_1(2-s)}|$ to \eqref{eq:RS_growth_estimate} is $\ll_{\Gamma, A, \varepsilon,r} \gamma$. Indeed, since $s \in \mathcal{S}_{\varepsilon}$, note that $s \in \C$ is at least $\varepsilon$ away from the relevant poles. Moreover, $A \geq n_{jk} \geq 0$ and the entries of $\boldsymbol{\Phi}(s)$ are bounded (depending on $\Gamma$ and $\varepsilon$) on the set $\Omega_{\varepsilon}$ by Lemma \ref{omegalem}.
Thus the claimed bound follows from \eqref{g1compute}.
    
  Now we estimate the remaining terms. The first term on the right side of \eqref{basicstate3} is
    \begin{equation*}
        \frac{1}{\mathcal{V}} \int_{\mathcal{F}(1)} F(w) E_i(w, s) \, d\mu(w) \ll_{\Gamma, \varepsilon} |{s}|^{3/2} \int_{\mathcal{F}(1)} |F(w)| \, d\mu(w) \ll_{\Gamma, A, \varepsilon,r} (M+\gamma) |{s}|^{3/2}.
    \end{equation*}
    Indeed, the first inequality above follows from \cref{lemma:Eis_pointwise_bound}. The second inequality above follows from \eqref{eq:improved_F_asympt}, and the estimate $|{\varphi_{j}(v)}| \ll_{A, \varepsilon,r} \gamma v^{\alpha + \varepsilon}$ for $v\geq \varepsilon$, since they imply
    \begin{equation*}
        \int_{\mathcal{F}(1)} |F(w)| \, d\mu(w) \ll \sum_{j=1}^h \int_{\frac{1}{10}}^1 \int_{\C/\Lambda} \big( |{F(\sigma_j w) - \varphi_j(v)}| + |{\varphi_j(v)}|\big) \, dx \, dy \, dv
        \ll_{\Gamma, A, \varepsilon,r} M + \gamma.
    \end{equation*}
    
    The second term on the right side of \eqref{basicstate3} can be written as 
    \begin{equation}\label{eq:RS_bound_interm}
        \frac{1}{\mathcal{V}} \sum_{j=1}^{h} \int_{1}^\infty \int_{\C/\Lambda} \Big( \big(F(\sigma_{j} w) - \varphi_{j}(v)\big) \cdot E_i(\sigma_j w, s) + \varphi_{j}(v) \cdot E_i^1(\sigma_{j}w,s) \Big) \, dx \, dy \, \frac{dv}{v^3} ,
    \end{equation}
    where $E_i^{A}$ denotes the height $A$ truncated Eisenstein series in \eqref{eq:trunc_eis_series}.
    We apply \cref{lemma:Eis_pointwise_bound} and \eqref{eq:improved_F_asympt} to the first integrand of \eqref{eq:RS_bound_interm}, as above. This time the resulting bound is 
    \begin{equation*}
        \ll_{\Gamma, A, \varepsilon} \int_{1}^\infty M v^{-50} |{s}|^{3/2} \, dv \ll M |{s}|^{3/2}.
    \end{equation*}
    Let $C_{\Gamma}> 0$ be the constant depending on $\Gamma$ in the statement of \cref{lemma:truncated_Eis_decay}. 
    We choose any constant $D_{\Gamma} \geq C_{\Gamma}$ such that $D_{\Gamma}|s|^2 \geq \max(1,C_{\Gamma}|s|^2)$. 
   For the second integrand of \eqref{eq:RS_bound_interm}, if $1 \leq v \leq D_{\Gamma} |s|^2$ then apply the same pointwise bound of \cref{lemma:Eis_pointwise_bound}, observing that it still holds for the truncation $E_i^1(\sigma_jw, s)$ since $\rho_{ij}(0, v, s) \ll_{\Gamma, \varepsilon} v^{\Re(s)}$. If $v >D_{\Gamma} |s|^2$ then apply Cauchy--Schwarz in $z$ followed by \cref{lemma:truncated_Eis_decay} instead. We conclude that the second integrand of \eqref{eq:RS_bound_interm} contributes
    \begin{align*}
        &\ll_{\Gamma, A, \varepsilon,r} \int_1^{D_{\Gamma} |s|^2} \gamma v^{\alpha+\varepsilon + \Re(s)} |{s}|^{1/2} \Big(1 + \frac{|{s}|}{v}\Big) \, \frac{dv}{v^3} +
         \int_{D_{\Gamma} |{s}|^2}^\infty \gamma v^{\alpha+\varepsilon} e^{-m_\Gamma \cdot \frac{v - D_{\Gamma}|{s}|^2}{2}}(D_{\Gamma} |{s}|^2)^{\Re(s)} |{s}|^{1/2} \,  \frac{dv}{v^3} \\
        & \ll_{\Gamma, A,r} \gamma \cdot |{s}|^{2|{\alpha}| + 100} .
    \end{align*} 
    Collecting all the estimates leads to \eqref{eq:RS_growth_estimate}, as desired.
\end{proof}


\section{Quartic Eisenstein series and theta functions}

\subsection{Quartic Eisenstein series}

Recall that $K=\mathbb{Q}(i)$. Let $0 \neq L \in \mathcal{O}_K = \Z[i]$ and let $\Gamma(L)$ denote the principal congruence group 
\begin{equation*}
    \Gamma(L) := \big\{ g \in \SL_2(\mathcal{O}_K) : g \equiv I \pmod{L}\big\}.
\end{equation*}
Note that $\Gamma(L)$ is a normal, co-finite but not co-compact subgroup of $\SL_2(\mathcal{O}_K)$, and if $L \nmid 2$ then $\Gamma(L)_\infty = \Gamma(L)'_\infty$, $\Lambda = L \mathcal{O}_K$, $\Lambda^* = (\lambda^2 L )^{-1}\mathcal{O}_K$, and $\mathcal{V} = \vol(\C/\Lambda) =  N(L)$.

Let $\chi_4: \Gamma(\lambda^4) \to S^1$ denote the quartic Kubota \cite{Kub} symbol 
\begin{equation*}
    \chi_4(\gamma)=
    \begin{cases} 
        \big( \frac{c}{a} \big)_4 & \quad \text{if} \quad c \neq 0, \\ 1 & \quad \text{if} \quad c=0,
    \end{cases} 
    \qquad \text{ for }\gamma=\pMatrix {a} {b} {c} {d} \in \Gamma(\lambda^{4}),
\end{equation*}
where $\big(\frac{c}{a}\big)_4$ denotes the standard quartic residue symbol on $K$. Then $\chi_4$ is a character of $\Gamma(\lambda^4)$, see \cite[Lemma 1]{Suz1}.

Let $\{\kappa_1:=\infty, \kappa_2, \dots,\kappa_h\}$ denote the set of all cusps of $\Gamma(\lambda^4)$ and $\{\eta_1:=\infty, \eta_2, \dots, \eta_m \}$ denote the subset of cusps of $\Gamma(\lambda^4)$ that are essential with respect to $\chi_4$. 
 We assume that the cusps are enumerated such that $\kappa_j = \eta_j$ for each integer $1 \leq j \leq m$. One can consult \cite[Proposition 1]{Suz1} for the complete list of $m = 24$ essential cusps, and we choose scaling matrices $\sigma_i = \sigma_{\eta_i}$ according to \cite[Table 1]{Suz1}. 

\begin{remark}[Choice of scaling matrices]
    Observe from \eqref{varphidefgen} that $\varphi_{ij}(\delta, s, \chi_4)$ depends on the choice of scaling matrix $\sigma_j$ (though not on $\sigma_i$), so to use the results of Suzuki \cite{Suz1} we need to choose the same ones. The dependence on $\sigma_j$ is mild, with a different choice of $\sigma_j \in \SL_2(\mathcal{O}_K)$, i.e.\ $\sigma_j \mapsto \sigma_j \beta$ for $\beta \in \SL_2(\mathcal{O}_K)_\infty$, giving $\varphi_{ij}(\delta, s, \chi_4) \mapsto \check{e}(\delta r)\varphi_{ij}(\pm \delta, s, \chi_4)$ for some $r \in \mathcal{O}_K$ and choice of sign $\pm$. Diaconu \cite{Dia} does not make the same choice as Suzuki. Instead, he allows multiplication on the right by $\left(\begin{smallmatrix}
    1 & \mu \\ 0 & 1
    \end{smallmatrix}\right)$ for an arbitrary $\mu \in \mathcal{O}_K$. This only leaves the ambiguity $\varphi_{ij}(\delta, s, \chi_4) \mapsto \check{e}(\delta \mu)\varphi_{ij}(\delta, s, \chi_4)$, so $\varphi_{ij}(0, s, \chi_4)$ is well-defined (and he writes explicit values for them at $\eta_j = 0, \frac{\lambda^4}{1+\lambda^3}$). For $\eta_1 = \infty$, Diaconu does fix $\sigma_1 = I$, so $\varphi_{i1}(\delta, s, \chi_4)$ is well-defined for all $\delta\in \Lambda^*$, and he also writes it explicitly for certain $i$. Thus despite the slightly different setups, we may use results from both sources \cite{Dia, Suz1} without conflict.
\end{remark}

With this choice of scaling matrices, the next result establishes that
the quartic Kubota symbol is invariant under conjugation by each $\sigma_i$ on $\Gamma(\lambda^4)$.

\begin{lemma}\label{chi4_conjugation_lemma}
    For any $g \in \Gamma(\lambda^4)$ and scaling matrix $\sigma_i$ given in \cite[Table 1]{Suz1} for each integer $1 \leq i \leq m$, we have
    \begin{equation} \label{chi4prop}
        \chi_4(\sigma_i g \sigma_i^{-1}) = \chi_4(g) = \chi_4(\sigma_i^{-1} g \sigma_i).
    \end{equation}
\end{lemma}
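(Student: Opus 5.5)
First observe that the two equalities in \eqref{chi4prop} are equivalent. Since $\Gamma(\lambda^4)$ is normal in $\SL_2(\mathcal{O}_K)$, applying the first equality to $g' := \sigma_i^{-1} g \sigma_i \in \Gamma(\lambda^4)$ gives the second. So it suffices to show $\chi_4(\sigma g \sigma^{-1}) = \chi_4(g)$ for each $\sigma = \sigma_i$ in \cite[Table 1]{Suz1}.

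The plan is to reduce to generators of $\SL_2(\mathcal{O}_K)$. The map $g \mapsto \chi_4(\sigma g \sigma^{-1})$ is a character of $\Gamma(\lambda^4)$, because conjugation is an automorphism of $\Gamma(\lambda^4)$ and $\chi_4$ is a character by \cite[Lemma 1]{Suz1}. Hence the set of $\sigma \in \SL_2(\mathcal{O}_K)$ satisfying $\chi_4 \circ \mathrm{Ad}_\sigma = \chi_4$ on $\Gamma(\lambda^4)$ is a subgroup of $\SL_2(\mathcal{O}_K)$. It would therefore suffice to check the identity for matrices generating a subgroup that contains all the $\sigma_i$. Each $\sigma_i$ in Suzuki's table is an explicit product of the elementary matrices $\pmatrix{1}{\mu}{0}{1}$, $\pmatrix{1}{0}{\mu}{1}$ (with $\mu \in \mathcal{O}_K$), $\pmatrix{0}{-1}{1}{0}$, and the diagonal units $\pmatrix{\epsilon}{0}{0}{\epsilon^{-1}}$. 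So I would verify invariance for these generator types, or at least for whichever of them actually occur.

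For each generator type I would compute the conjugate explicitly. Take $g = \pmatrix{a}{b}{c}{d}$.

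\textbf{Translations.} Conjugating by $\pmatrix{1}{\mu}{0}{1}$ gives lower-left entry $c$ and upper-left entry $a+\mu c$. One then needs $\big(\frac{c}{a+\mu c}\big)_4 = \big(\frac{c}{a}\big)_4$. This follows from quartic reciprocity, used to flip to $\big(\frac{a+\mu c}{c}\big)_4 = \big(\frac{a}{c}\big)_4$ with $c$ handled via its primary part. The congruences $a \equiv 1$ and $c \equiv 0 \pmod{\lambda^4}$ control the supplementary factors, including those from $c$ being divisible by $\lambda$ and from units.

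\textbf{Diagonal units.} Conjugation sends $(a,c)$ to $(a,\epsilon^{-2}c)$, where $\epsilon^{-2} = \pm 1$. Here one needs $\big(\frac{-1}{a}\big)_4 = 1$ for $a \equiv 1 \pmod{\lambda^4}$, which holds since $N(a) \equiv 1 \pmod 8$.

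\textbf{The involution $w = \pmatrix{0}{-1}{1}{0}$.} Conjugation sends $g$ to $\pmatrix{d}{-c}{-b}{a}$, so one needs $\big(\frac{-b}{d}\big)_4 = \big(\frac{c}{a}\big)_4$. This is the classical invariance of Kubota's symbol, proved from $ad - bc = 1$ and reciprocity. I would check whether the cases $b = 0$ or $c = 0$ need separate treatment.

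The main obstacle is the careful bookkeeping with quartic reciprocity and its supplementary laws at $\lambda$ for non-primary $c$ divisible by powers of $\lambda$. Kubota's original argument only shows that $\chi_4$ is a character on $\Gamma(\lambda^4)$, which is a normal subgroup, and \emph{not} that it is invariant under all of $\SL_2(\mathcal{O}_K)$. In fact full invariance likely fails, which is why the statement restricts to Suzuki's specific $\sigma_i$. So if some generator fails the check, the fallback is to verify \eqref{chi4prop} directly for each of the $24$ matrices in \cite[Table 1]{Suz1}. That is a finite computation. It should be possible to reduce it further by checking on a finite generating set of $\Gamma(\lambda^4)$, since both sides are characters and therefore agree once they agree on generators.
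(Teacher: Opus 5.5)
\textbf{There is a genuine gap.} Your reduction of the second equality to the first via normality of $\Gamma(\lambda^4)$ is fine and matches the paper. The generator strategy after that fails. The set $\{\sigma : \chi_4\circ \mathrm{Ad}_\sigma=\chi_4\}$ is indeed a subgroup. But the generator types you list (all upper and lower unipotents with entries in $\mathcal{O}_K$, $\pmatrix{0}{-1}{1}{0}$, and the diagonal units) generate all of $\SL_2(\mathcal{O}_K)$, and $\chi_4$ is not invariant under all of $\SL_2(\mathcal{O}_K)$. If it were, then $\chi_4(\Gamma_\kappa)=\chi_4(\sigma_\kappa\Gamma_\infty\sigma_\kappa^{-1})=1$ for every cusp $\kappa$, so every cusp would be essential. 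But $\Gamma(\lambda^4)$ also has non-essential cusps.

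Concretely, your translation step is false. Take $g=\pmatrix{1}{0}{\lambda^5}{1}\in\Gamma(\lambda^4)$ and $T=\pmatrix{1}{i}{0}{1}$. Then $\chi_4(g)=1$, while $TgT^{-1}=\pmatrix{5-4i}{\lambda^5}{\lambda^5}{1-i\lambda^5}$. Since $5-4i$ is a primary prime of norm $41$,
\[
\chi_4(TgT^{-1})=\Big(\frac{\lambda}{5-4i}\Big)_4^{5}, \qquad \Big(\frac{\lambda}{5-4i}\Big)_4\equiv\lambda^{10}=32i\equiv -1 \pmod{5-4i},
\]
so $\chi_4(TgT^{-1})=-1$. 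The supplementary factor at $\lambda$ is not controlled by $a\equiv 1\pmod{\lambda^4}$ alone. The relation $\big(\frac{x}{y}\big)_4=\big(\frac{x}{y+wx}\big)_4$ that you implicitly need is only available for $w\equiv 0\pmod 4$ (Suzuki's (2.5)). Your checks for the involution and the diagonal units are correct.

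You half-anticipate this failure, but then the proposal contains no argument. Verifying \eqref{chi4prop} "directly for each of the $24$ matrices" simply restates the lemma. Checking on a finite generating set of $\Gamma(\lambda^4)$ is correct in principle, but it needs explicit generators of a subgroup of index $3072$ in $\SL_2(\Z[i])$, which you neither supply nor use.

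The missing idea is the property of Suzuki's $\sigma_i$ that forces invariance, namely the congruence conditions on their entries. The paper handles $c=0$ separately: then $g\in\Gamma(\lambda^4)_\infty$ and $\sigma_i g\sigma_i^{-1}\in\Gamma(\lambda^4)_i$, which is killed by $\chi_4$ because $\eta_i$ is essential. For $c\neq 0$ it uses Suzuki's Lemmas 3 and 4. These express $\overline{\chi_4}(\sigma_i g\sigma_i^{-1})$, for $\sigma_i=\pmatrix{\alpha}{\beta}{\gamma}{\delta}$, as $\big(\frac{-\gamma}{\alpha}\big)_4^{-1}\big(\frac{c\delta-d\gamma}{d\alpha-c\beta}\big)_4$ when $\lambda\mid\gamma$, and analogously when $\lambda\nmid\gamma$. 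The paper then combines three ingredients:
\begin{itemize}
\item the Table 1 congruences: $\alpha,\delta\equiv 1\pmod{\lambda^3}$, $\beta\equiv0\pmod 4$, $\gamma\equiv 0\pmod 2$ when $\lambda\mid\gamma$; and $\beta\equiv\gamma\equiv1\pmod 2$, $\delta\equiv 0\pmod 4$ otherwise;
\item quartic reciprocity with Hilbert symbols at $\lambda$;
\item the identity $\big(\frac{c}{d}\big)_4=\big(\frac{c}{a}\big)_4^{-1}$, which follows from $ad-bc=1$ with $b\equiv c\equiv 0\pmod 4$.
\end{itemize}
Together these reduce the expression to $\overline{\chi_4}(g)$. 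To salvage a generator-based argument, you would have to exhibit a subgroup that both preserves $\chi_4$ and contains every $\sigma_i$. That again comes down to exactly this analysis of the entries of the $\sigma_i$.
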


\begin{proof}
    For convenience, let us denote the quartic residue symbol $\big(\frac{\star}{\star}\big)_4$ by $\big(\frac{\star}{\star}\big)$. The second equality in the lemma follows from the first one by changing variables $g \mapsto \sigma_i^{-1} g \sigma_i \in \Gamma(\lambda^4)$, so we only need to show that $\chi_4(\sigma_i g \sigma_i^{-1}) = \chi_4(g)$.
  Let  $\sigma_i = \left(\begin{smallmatrix}
        \alpha & \beta \\ \gamma & \delta
    \end{smallmatrix}\right) \in \SL_2(\mathcal{O}_K)$ and $g = \left(\begin{smallmatrix}
        a& b \\ c & d
    \end{smallmatrix}\right) \in \Gamma(\lambda^4)$.
    
  We first establish \eqref{chi4prop} in the case when $c=0$.
Since $g\in\Gamma(\lambda^4)$, we have $a=d=1$ when $c=0$,
and hence $g\in\Gamma(\lambda^4)_\infty$. Thus $\chi_4(g)=1$. Moreover,
$ \sigma_i g\sigma_i^{-1}\in \sigma_i\Gamma(\lambda^4)_\infty\sigma_i^{-1}=\Gamma(\lambda^4)_i$,
and since the cusp $\eta_i$ is essential with respect to $\chi_4$, we have
$\chi_4(\sigma_i g\sigma_i^{-1})=1$.

Thus we may assume $c \neq 0$, and this case will follow by a direct computation using \cite[Lemmas 3 and 4]{Suz1}, which imply that
    \begin{equation*}
        \overline{\chi_4}(\sigma_i g \sigma_i^{-1}) = \begin{cases}
            \big(\frac{\alpha}{\gamma}\big)^{-1} \big(\frac{d\alpha - c\beta}{c\delta - d\gamma}\big) & \text{if } \lambda \nmid \gamma, \\
            \big(\frac{-\gamma}{\alpha}\big)^{-1} \big(\frac{c\delta - d\gamma}{d\alpha - c\beta}\big) & \text{if } \lambda \mid \gamma.
        \end{cases}
    \end{equation*} 
  
Next we use the congruence restrictions imposed by the choice of scaling matrices to apply quartic reciprocity\footnote{There is a typo in the second equation of \cite[(2.4)]{Suz1}, which should be $\big(\frac{i}{\delta}\big) = i (-1)^{a_4+a_5}$.} \cite[(2.1) and (2.2)]{Suz1} and the relation $\big(\frac{x}{y}\big) = \big(\frac{x}{y + wx}\big)$ for any $x\equiv 0\pmod{2}$ and $w \equiv 0 \pmod{4}$ \cite[(2.5)]{Suz1}. In what follows, let $(\cdot ,\cdot)_{\lambda}$ denote the Hilbert symbol
defined in \cite[pg.~72]{Suz1}.
        
If $\lambda \mid \gamma$ we infer from \cite[Table 1]{Suz1} that $\alpha \equiv 1\pmod{\lambda^3}$, $\beta \equiv 0 \pmod{4}$, $\gamma \equiv 0 \pmod{2}$, and $\delta \equiv 1 \pmod{\lambda^3}$. If $\beta = 0$, then $\alpha = \delta = 1$ and the result also follows. Otherwise $\beta\neq 0$, so denote $\beta' = \frac{\beta}{(\beta, d)} \equiv 0 \pmod{4}$ and $d' = \frac{d}{(\beta, d)} \equiv 1 \pmod{\lambda^3}$, where we choose the unique ideal generator $(\beta, d) \equiv 1 \pmod{\lambda^3}$. Also note that $c \equiv 0 \pmod{\lambda^4}$. We compute that
\begin{align*}
        \Big(\frac{-\gamma}{\alpha}\Big)^{-1} \Big(\frac{c\delta - d\gamma}{d\alpha - c\beta}\Big) &= \Big(\frac{-\gamma}{\alpha}\Big)^{-1} \Big(\frac{c\delta}{(\beta, d)}\Big) \Big(\frac{\beta'(c\delta - d\gamma)+\delta(d'\alpha - c\beta')}{d'\alpha - c\beta'}\Big)\Big(\frac{\beta'}{d'\alpha - c\beta'}\Big)^{-1} \\
        &= \Big(\frac{-\gamma}{\alpha}\Big)^{-1} \Big(\frac{c\delta}{(\beta, d)}\Big) \Big(\frac{d'}{d'\alpha - c\beta'}\Big)\Big(\frac{\beta'}{d'\alpha - c\beta'}\Big)^{-1} \\
        &= \Big(\frac{-\gamma}{\alpha}\Big)^{-1} \Big(\frac{c\delta}{(\beta, d)}\Big) \Big(\frac{-c\beta'}{d'}\Big) (d'\alpha - c\beta', d')_\lambda \Big(\frac{\beta'}{d'\alpha}\Big)^{-1} \\
        &= \Big(\frac{-\gamma}{\alpha}\Big)^{-1} \Big(\frac{c}{d}\Big) \Big(\frac{\delta}{(\beta, d)}\Big) \Big(\frac{-1}{d'}\Big) (d'\alpha, d')_\lambda \Big(\frac{\beta'}{\alpha}\Big)^{-1} \\
        & = \Big(\frac{c}{d}\Big) \Big(\frac{\delta}{(\beta, d)}\Big) \Big(\frac{-\gamma\beta'}{\alpha}\Big)^{-1} (\alpha, d')_\lambda  = \Big(\frac{c}{d}\Big) \Big(\frac{\delta}{(\beta, d)}\Big) \Big(\frac{(\beta, d)}{\alpha}\Big) (\alpha, d')_\lambda.
    \end{align*}

    Furthermore, since $b\equiv c\equiv 0 \pmod{4}$ we have
    \begin{equation*}
        \Big(\frac{c}{d}\Big) = \Big(\frac{c}{ad}\Big) \Big(\frac{c}{a}\Big)^{-1} = \Big(\frac{c}{ad-bc}\Big) \Big(\frac{c}{a}\Big)^{-1} = \Big(\frac{c}{a}\Big)^{-1} = \overline{\chi_4}(g),
    \end{equation*}
    and since $d\equiv 1\pmod{4}$ we have $(\alpha, d')_\lambda = (\alpha, d)_\lambda (\alpha, (\beta, d))^{-1}_\lambda = (\alpha, (\beta, d))_\lambda$, hence
    \begin{equation*}
        \Big(\frac{\delta}{(\beta, d)}\Big) \Big(\frac{(\beta, d)}{\alpha}\Big) (\alpha, d')_\lambda = \Big(\frac{\alpha\delta}{(\beta, d)}\Big) = \Big(\frac{\alpha\delta -\beta\gamma}{(\beta, d)}\Big) = 1.
    \end{equation*}
    This gives the result for $\lambda \mid \gamma$. The case $\lambda \nmid \gamma$ follows from a similar computation using the conditions $\beta \equiv \gamma \equiv 1 \pmod{2}$ and $\delta \equiv 0 \pmod{4}$ imposed by the choice of $\sigma_i$ \cite[Table~1]{Suz1}.
\end{proof}

Consider the Eisenstein series $E_{i}(w,s,\chi_4)$ on $\Gamma(\lambda^4)$ attached to the essential cusp $\eta_i$. Let us show how to write the Fourier expansion of an Eisenstein series at an arbitrary essential cusp in terms of the expansion of another Eisenstein series at $\infty$.

For integers $1\leq i, j\leq m$, we have $\chi_4(\sigma_{j}^{-1}\sigma_i g \sigma_{i}^{-1}\sigma_j) = \chi_4(g) = 1$ for any $g \in \Gamma(\lambda^4)_{\infty}$ by \cref{chi4_conjugation_lemma}. Thus the cusp $\sigma_{j}^{-1}\sigma_i \cdot\infty$ is essential with respect to $\chi_4$, and so there exist $\gamma_{j}^i \in \Gamma(\lambda^4)$ and $1\leq j^{(i)} \leq m$ such that $\gamma_{j}^i \cdot\sigma_{j}^{-1}\sigma_i \cdot \infty = \eta_{j^{(i)}}$.  We deduce that there is $\beta_j^i \in \SL_2(\mathcal{O}_K)_\infty$ such that 
\begin{equation}\label{cusp_shuffling}
    \gamma_{j}^i \cdot\sigma_{j}^{-1} \sigma_i \cdot \beta_{j}^i  =  \sigma_{j^{(i)}}, \qquad \text{hence} \qquad  \sigma_{i}^{-1} \sigma_j   =   \beta_{j}^i \cdot \sigma_{j^{(i)}}^{-1} \cdot \gamma_{j}^i.
\end{equation}

\begin{lemma}\label{Eis_change_cusp_lemma}
    For any integers $1\leq i, j\leq m$, we have
    \begin{equation*}
        E_i(\sigma_jw, s, \chi_4) = \chi_4(\gamma_j^i) \cdot E_{j^{(i)}}(w, s, \chi_4).
    \end{equation*}
\end{lemma}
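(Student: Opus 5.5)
The plan is to unfold the definition \eqref{eisdef} directly in the half-plane $\Re(s)>2$, re-index the sum using \eqref{cusp_shuffling}, and then pass to all $s$ by meromorphic continuation. Write $\Gamma := \Gamma(\lambda^4)$. By definition,
\begin{equation*}
    E_i(\sigma_j w, s, \chi_4) = \sum_{\gamma \in \Gamma_i \backslash \Gamma} \overline{\chi_4}(\gamma)\, v(\sigma_i^{-1}\gamma\sigma_j w)^s .
\end{equation*}
Normality of $\Gamma$ in $\SL_2(\mathcal{O}_K)$ lets us write $\gamma\sigma_j = \sigma_j \gamma'$ with $\gamma' = \sigma_j^{-1}\gamma\sigma_j \in \Gamma$. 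By \cref{chi4_conjugation_lemma}, $\chi_4(\gamma') = \chi_4(\gamma)$. Then
\begin{equation*}
    \sigma_i^{-1}\gamma\sigma_j = \sigma_i^{-1}\sigma_j\gamma' = \beta_j^i\, \sigma_{j^{(i)}}^{-1}\, \gamma_j^i \gamma'
\end{equation*}
by \eqref{cusp_shuffling}. Since $\beta_j^i$ lies in $\SL_2(\mathcal{O}_K)_\infty$, it preserves the height $v$, as in \eqref{vbeta}. Hence the summand becomes $\overline{\chi_4}(\gamma')\, v(\sigma_{j^{(i)}}^{-1} \gamma'' w)^s$ with $\gamma'' := \gamma_j^i\gamma'$. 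Because $\chi_4$ is a character, $\overline{\chi_4}(\gamma') = \chi_4(\gamma_j^i)\,\overline{\chi_4}(\gamma'')$.

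Next we check that $\gamma \mapsto \gamma'' = \gamma_j^i \sigma_j^{-1}\gamma\sigma_j$ induces a bijection $\Gamma_i\backslash\Gamma \to \Gamma_{j^{(i)}}\backslash\Gamma$. It is clearly a bijection of $\Gamma$ onto itself, so we only need to compare the left stabilizers. This amounts to the identity
\begin{equation*}
    \gamma_j^i\sigma_j^{-1}\,\Gamma_i\,\sigma_j(\gamma_j^i)^{-1} = \Gamma_{j^{(i)}}.
\end{equation*}
Since $\Gamma_i = \Gamma\cap\sigma_i B(\C)\sigma_i^{-1}$, normality gives that the left side equals $\Gamma \cap g B(\C) g^{-1}$ with $g = \gamma_j^i\sigma_j^{-1}\sigma_i$. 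By \eqref{cusp_shuffling}, $g = \sigma_{j^{(i)}}(\beta_j^i)^{-1}$, and $\beta_j^i \in B(\C)$, so this is exactly $\Gamma_{j^{(i)}}$. We also need the summand to be well defined on cosets, i.e. invariant under $\Gamma_{j^{(i)}}$ on the left. For $v$ this follows because $\sigma_{j^{(i)}}^{-1}\Gamma_{j^{(i)}}\sigma_{j^{(i)}} = \Gamma_\infty$ is unipotent and fixes heights. For $\chi_4$ it follows because $\eta_{j^{(i)}}$ is essential.

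Summing over the new cosets then gives
\begin{equation*}
    E_i(\sigma_j w,s,\chi_4) = \chi_4(\gamma_j^i)\sum_{\gamma''\in\Gamma_{j^{(i)}}\backslash\Gamma}\overline{\chi_4}(\gamma'')\,v(\sigma_{j^{(i)}}^{-1}\gamma''w)^s = \chi_4(\gamma_j^i)\,E_{j^{(i)}}(w,s,\chi_4)
\end{equation*}
for $\Re(s)>2$, where absolute convergence justifies the rearrangement. The identity extends to all $s\in\C$ by meromorphic continuation.

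The main point requiring care is the character bookkeeping. The conjugation by $\sigma_j$ must not change $\chi_4$; this is exactly what \cref{chi4_conjugation_lemma} provides, and it is why the specific scaling matrices of \cite[Table 1]{Suz1} are used. One must also keep the order of multiplication consistent, so that the prefactor comes out as $\chi_4(\gamma_j^i)$ and not its conjugate.
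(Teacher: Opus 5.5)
Your proof is correct and takes the same approach as the paper. You conjugate by $\sigma_j$ using \cref{chi4_conjugation_lemma}, substitute \eqref{cusp_shuffling}, re-index via $\gamma'' = \gamma_j^i\gamma'$ for $\Re(s)>2$, and extend by meromorphic continuation. Your explicit stabilizer check (using $\beta_j^i \in B(\C)$) spells out what the paper does implicitly when it rewrites $\sigma_j^{-1}\sigma_i\Gamma_\infty\sigma_i^{-1}\sigma_j$ as $(\gamma_j^i)^{-1}\sigma_{j^{(i)}}\Gamma_\infty\sigma_{j^{(i)}}^{-1}\gamma_j^i$.
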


\begin{proof}
By meromorphic continuation, it suffices to establish the result for $\Re(s) > 2$. For simplicity, we denote $\Gamma := \Gamma(\lambda^4)$ and $\Gamma_i = \sigma_i \Gamma(\lambda^4)_\infty \sigma_i^{-1}$. Then
\begin{align*}
    E_i(\sigma_jw, s, \chi_4) = \sum_{\gamma \in \Gamma_i \backslash\Gamma} v(\sigma_i^{-1} \gamma \sigma_j w)^s \cdot \overline{\chi_4}(\gamma) = \sum_{\gamma' \in \sigma_j^{-1}\sigma_i \Gamma_\infty \sigma_i^{-1}\sigma_j\backslash\Gamma} v(\sigma_i^{-1} \sigma_j \gamma' w)^s \cdot \overline{\chi_4}(\sigma_j\gamma'\sigma_j^{-1}),
\end{align*}
where $\gamma' = \sigma_j^{-1}\gamma\sigma_j \in \Gamma$. By \cref{chi4_conjugation_lemma} we have $\overline{\chi_4}(\sigma_j\gamma'\sigma_j^{-1}) = \overline{\chi_4}(\gamma')$. Substituting \eqref{cusp_shuffling} we obtain $\sigma_j^{-1}\sigma_i \cdot \Gamma_\infty \cdot \sigma_i^{-1}\sigma_j = (\gamma_j^i)^{-1}\sigma_{j^{(i)}}\cdot \Gamma_\infty\cdot \sigma_{j^{(i)}}^{-1}\gamma_j^i$ and $v(\sigma_i^{-1} \sigma_j \gamma' w) = v(\sigma_{j^{(i)}}^{-1} \gamma_j^i \gamma' w)$. Changing variables to $\gamma = \gamma_j^i \gamma' \in \Gamma$ we conclude that
\begin{equation*}
    E_i(\sigma_jw, s, \chi_4) = \chi_4(\gamma_j^i) \cdot \sum_{\gamma \in \Gamma_{j^{(i)}} \backslash\Gamma} v(\sigma_{j^{(i)}}^{-1} \gamma w)^s \overline{\chi_4}(\gamma) = \chi_4(\gamma_j^i) \cdot E_{j^{(i)}}(w, s, \chi_4).
\end{equation*}
\end{proof}

It will be useful to define certain normalized Eisenstein series. To this end, let
\begin{equation*} 
    \zeta^{*}_{K}(s):=|d_K|^{\frac{s}{2}} \Gamma_\C(s) \zeta_{K}(s),
\end{equation*}
where $d_K = -4$, $\Gamma_\C(s) := 2(2 \pi)^{-s} \Gamma(s)$ and $\zeta_{K}(s)$ denotes the Dedekind zeta function of $K$. The completed zeta function $\zeta^{*}_{K}(s)$ is holomorphic on $\mathbb{C}$ except for simple poles at $s=0$ and $s=1$, and satisfies the functional equation
\begin{equation} \label{zetafunc}
    \zeta^{*}_{K}(s)=\zeta^{*}_{K}(1-s).
\end{equation}

Let
\begin{equation} \label{fancyE}
    \widetilde{E}_{i}(w,s,\chi_4):= G(s) \zeta_{\lambda} (4s -3) E_{i}(w,s,\chi_4), 
\end{equation}
where we denote
\begin{equation*}
    G(s):= \Gamma_\C \Big(s-\frac{3}{4} \Big )  \Gamma_\C \Big(s-\frac{1}{2} \Big)  \Gamma_\C \Big(s-\frac{1}{4} \Big),
\end{equation*}
and for $\Re(s)>1$,
\begin{equation*}
    \zeta_{\lambda}(s) := \sum_{m \equiv 1 \pmod{\lambda^{3}}} \frac{1}{N(m)^s} = \zeta_K(s) \cdot (1-2^{-s}).
\end{equation*}
Analogously, for $\delta \in \Lambda^{*}$ define 
\begin{equation}\label{rhotilde}
    \widetilde{\varphi}_{ij}(\delta,s,\chi_4) := G(s) \zeta_{\lambda} (4s -3) \varphi_{ij}(\delta,s,\chi_4).
\end{equation}

\begin{lemma}[Constant term computation] \label{scatter}
    For any integers $1 \leq i, j\leq m$, we have
    \begin{equation} \label{varphigenell}
        \varphi_{ij}(0,s,\chi_4) = P_{ij}(s) \cdot \frac{\zeta_{\lambda}(4s-4)}{\zeta_{\lambda}(4s-3)},
    \end{equation} 
    where either $P_{ij}(s) = c_{ij}$ or $P_{ij}(s) = c_{ij}(2^{4s-4}-1)^{-1}$ for some constant $c_{ij} \in \C$ (possibly equal to zero) that depends only on $i$ and $j$. In particular, for $\eta_{i_1} = 0$ and $\eta_{i_2} = \frac{\lambda^4}{1+\lambda^3}$ (which are essential), recalling that $\eta_1 = \infty$ we have 
    \begin{equation} \label{zeroinfty}
        \varphi_{i_1 1}(0,s,\chi_4)=\mathcal{V}^{-1} \cdot \frac{\zeta_{\lambda}(4s-4)}{\zeta_{\lambda}(4s-3)} \quad \text{and} \quad \varphi_{i_2 i_2}(0,s,\chi_4) = \frac{2 \mathcal{V}^{-1}}{(2^{4s-4} - 1)}\cdot  \frac{\zeta_{\lambda}(4s-4)}{\zeta_{\lambda}(4s-3)}.
    \end{equation}
    The functions $\widetilde{\varphi}_{ij}(0, s, \chi_4)$ have meromorphic continuation to $\C$ with at most one possible (simple) pole, at $s = \frac{5}{4}$. Moreover, they are bounded when $|\Im(s)|$ is large in vertical strips of finite width.
\end{lemma}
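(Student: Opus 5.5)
We reduce the constant term to a Dirichlet series over moduli and evaluate it by multiplicativity. By \eqref{cusp_shuffling} and \cref{Eis_change_cusp_lemma}, it suffices to understand the double coset sums in \eqref{varphidefgen} at $\delta=0$.

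\emph{Writing the constant term as a Dirichlet series.} For $\gamma = \pmatrix{a}{b}{c}{d} \in \sigma_i^{-1}\Gamma(\lambda^4)\sigma_j$ with $c\neq 0$, we write $\overline{\chi_4}(\sigma_i\gamma\sigma_j^{-1})$ explicitly in terms of $(c,d)$ using \cite[Lemmas 3 and 4]{Suz1}, exactly as in the proof of \cref{chi4_conjugation_lemma}. This produces a factor of the shape $\big(\frac{d'}{c'}\big)_4^{\pm1}$ times a root of unity depending only on congruence classes modulo a fixed power of $\lambda$. Here $c' , d'$ are affine-linear in $c, d$ with coefficients from $\sigma_i,\sigma_j$. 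Summing over $d$ modulo $c\Lambda$ (the right $\Gamma_\infty$-action), we obtain
\[
\varphi_{ij}(0,s,\chi_4)=\mathcal{V}^{-1}\sum_{c} N(c)^{-s}\sum_{d \bmod c} \epsilon_{ij}(c,d)\Big(\frac{d}{c}\Big)_4 ,
\]
with $c$ restricted to a fixed congruence class determined by $\sigma_i,\sigma_j$.

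\emph{Evaluating the inner sum.} We factor $c = \lambda^k c_0$ with $c_0\equiv 1 \pmod{\lambda^3}$. The odd part of the inner sum is the complete character sum $\sum_{d \bmod c_0}\big(\frac{d}{c_0}\big)_4$. This vanishes unless the character $\big(\frac{\cdot}{c_0}\big)_4$ is principal, i.e.\ unless $c_0 = m^4 n$ with $n$ squarefree-free of the character, so effectively $c_0$ is a fourth power $m^4$ up to the primitive part. In that case the sum equals $\varphi_K(m^4) = N(m)^4\prod_{\pi\mid m}(1-N(\pi)^{-1})$. Summing over $m\equiv 1\pmod{\lambda^3}$ gives
\[
\sum_m \frac{\varphi_K(m^4)}{N(m)^{4s}} = \frac{\zeta_\lambda(4s-4)}{\zeta_\lambda(4s-3)} .
\]
The $\lambda$-part is a finite or geometric sum over $k$, constrained by the congruence class of $c$ and the $2$-adic factors $\epsilon_{ij}$. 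Its contribution is either a single term, giving a constant $c_{ij}$, or a geometric series $\sum_{l\geq 1} 2^{-l(4s-4)}$, giving $c_{ij}(2^{4s-4}-1)^{-1}$. This is where the case distinction in \eqref{varphigenell} comes from. For the two special pairs in \eqref{zeroinfty} we carry the computation through explicitly, using the scaling matrices of \cite[Table 1]{Suz1} and matching Diaconu's explicit values \cite{Dia}.

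\emph{Analytic consequences.} Multiplying by $G(s)\zeta_\lambda(4s-3)$ gives
\[
\widetilde{\varphi}_{ij}(0,s,\chi_4)=P_{ij}(s)\, G(s)\,\zeta_\lambda(4s-4).
\]
The factor $\zeta_\lambda(4s-4)=\zeta_K(4s-4)(1-2^{4-4s})$ has its only pole at $s=5/4$. At $s=1$ the factor $(1-2^{4-4s})$ cancels the pole of $\zeta_K(4s-4)$ at $4s-4=0$, and also cancels the zero of $2^{4s-4}-1$ wherever $(2^{4s-4}-1)^{-1}$ appears. The other zeros of $2^{4s-4}-1$, at $s = 1+\pi i k/(2\log 2)$, are likewise cancelled by the factor $(1-2^{4-4s})$.

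For the poles of $G(s)$, at $s\in \{3/4,1/2,1/4\}-\mathbb{Z}_{\geq 0}$, we use the functional equation \eqref{zetafunc}. Write $\zeta_K(4s-4)$ through $\zeta_K^*(4s-4)$, which introduces a factor $\Gamma_\C(4s-4)^{-1}$. By the duplication/multiplication formula, $\Gamma_\C(4s-4)$ is a constant times $4^{4s}\,\Gamma_\C(s-1)\,G(s)$. So $G(s)\,\zeta_K(4s-4)$ is, up to exponential factors, equal to $\zeta_K^*(4s-4)/\Gamma_\C(s-1)$. This quotient is holomorphic except at $s=5/4$: the pole of $\zeta_K^*$ at $4s-4=0$ is killed by the zero of $1-2^{4-4s}$.

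For boundedness in vertical strips, we use $\zeta_K^*(4s-4)/\Gamma_\C(s-1)$ together with Stirling. By the functional equation, $\zeta_K^*(4s-4)$ decays exponentially like $e^{-\pi|4t|/2}$, while $1/\Gamma_\C(s-1)$ grows like $e^{\pi|t|/2}$. The net effect is exponential decay, with polynomial growth of $\zeta_K$ handled by convexity. The $2$-power factors are bounded away from their cancelled zeros.

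\emph{Main obstacle.} The main obstacle is the first step: tracking the $2$-adic factors $\epsilon_{ij}$ uniformly across all $24\times 24$ pairs from \cite[Table 1]{Suz1}. I would follow \cite{Suz1}'s case split ($\lambda\mid\gamma$ or not) and only verify that the $\lambda$-part yields one of the two stated shapes.
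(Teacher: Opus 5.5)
Your route is essentially the paper's. The formula \eqref{varphigenell} and the special cases \eqref{zeroinfty} rest, as in the paper, on Suzuki's explicit computations \cite[Proposition 4 and (5.1)--(5.10)]{Suz1}. Your character-sum and Euler-product sketch describes the mechanism: the odd part survives only for $c_0=m^4$, which produces $\zeta_\lambda(4s-4)/\zeta_\lambda(4s-3)$. Like the paper, you defer the $2$-adic bookkeeping to \cite{Suz1}. The continuation also matches. Your relation $\Gamma_\C(4s-4)=2^{8s-12}\,\Gamma_\C(s-1)\,G(s)$ is correct and equivalent to \eqref{eq:G_Gamma_identity}. Boundedness via Stirling plus polynomial bounds for $\zeta_K(4s-4)$ is the paper's argument.

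Your bookkeeping at $s=1$ is wrong as written. $\zeta_K(4s-4)$ has no pole at $s=1$, since $\zeta_K$ is regular at $0$ with $\zeta_K(0)=-\frac14$. The pole at $s=1$ belongs to $\zeta_K^*(4s-4)$ and comes from $\Gamma_\C(4s-4)$. In your quotient $\zeta_K^*(4s-4)/\Gamma_\C(s-1)$ this pole is cancelled by the pole of $\Gamma(s-1)$ in the denominator, which is exactly the paper's cancellation. It is not cancelled by $1-2^{4-4s}$.

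You cannot spend the simple zero of $1-2^{4-4s}$ at $s=1$ on that pole. In the case $P_{ij}(s)=c_{ij}(2^{4s-4}-1)^{-1}$ the zero is already used up, since $P_{ij}(s)(1-2^{4-4s})=c_{ij}2^{4-4s}$. Your two sentences set one simple zero against two simple poles. If the $\zeta$-pole really needed that zero, the second case would keep a genuine pole at $s=1$, contradicting the lemma. The correct statement has two parts:
\begin{itemize}
\item $P_{ij}(s)(1-2^{4-4s})$ is entire in both cases, equal to $c_{ij}(1-2^{4-4s})$ or $c_{ij}2^{4-4s}$;
\item $\zeta_K^*(4s-4)/\Gamma(s-1)$ is holomorphic away from the simple pole at $s=\frac54$.
\end{itemize}

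Separately, replace ``$c_0=m^4n$ with $n$ squarefree-free of the character'' with the precise statement. The complete sum vanishes unless $\big(\frac{\cdot}{c_0}\big)_4$ is principal, that is, unless $c_0=m^4$ with $m\equiv 1 \pmod{\lambda^3}$.
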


\begin{remark}
    The cusp pairs $(0, \infty)$ and $(\frac{\lambda^4}{1+\lambda^3}, \frac{\lambda^4}{1+\lambda^3})$ are important because they will be used in our application. The corresponding constant terms having a pole at $s = \frac{5}{4}$ will imply that the main term of the bias in the quartic large sieve is not identically zero.
\end{remark}

\begin{proof}
    The first claim \eqref{varphigenell} and the properties of $P_{ij}(s)$ follow from the explicit computations in \cite[Section 5]{Suz1}, more precisely from equations (5.2) to (5.10) there\footnote{Note that there is a typo in \cite[(5.10)]{Suz1}, where the factor $(2^{4s-1}-1)^{-1}$ should be $(2^{4s-4}-1)^{-1}$.}. The claim \eqref{zeroinfty} is a particular case which is made explicit in \cite[Proposition 4 and (5.1)]{Suz1}. For the meromorphic continuation of $\widetilde{\varphi}_{ij}(0, s, \chi_4)$, observe from the Gamma quadruplication formula
    \begin{equation*}
        \Gamma(4z) = 2^{8z-5/2} \pi^{-3/2} \Gamma(z)\Gamma\Big(z+\frac{1}{4}\Big) \Gamma\Big(z+\frac{1}{2}\Big)\Gamma\Big(z+\frac{3}{4}\Big)
    \end{equation*}
    that
    \begin{equation}\label{eq:G_Gamma_identity}
        G(s) = \frac{|d_K|^{\frac{4s-4}{2}} \Gamma_\C(4s-4)}{2^{11s-14} \pi^{1-s} \Gamma(s-1)},
    \end{equation}
    hence
    \begin{equation*}
        \widetilde{\varphi}_{ij}(0, s, \chi_4) = P_{ij}(s) \cdot (1-2^{4-4s}) \cdot \zeta_K(4s-4) \cdot G(s) = \frac{P_{ij}(s) \cdot (2^{4s-4}-1) \cdot \zeta^*_K(4s-4)}{2^{15s-18} \pi^{1-s} \Gamma(s-1)}.
    \end{equation*}
    Since $\zeta_K^*(4s-4)$ is meromorphic with (simple) poles only at $s=\frac{5}{4}$ and $s=1$, with the latter canceled by the pole of $\Gamma(s-1)$, we obtain the desired pole structure from the description of $P_{ij}(s)$ given above. Boundedness (away from $s=\frac{5}{4}$) in fixed vertical strips follows from the rapid decay of $G(s)$ and polynomial bounds for $\zeta_K(4s-4)$.
\end{proof}

Applying the functional equation \eqref{Efunceq} for the Eisenstein series with $w \mapsto \sigma_k w$, taking the Fourier coefficient at $0 \neq \delta \in \Lambda^*$ in both sides, and combining it with \eqref{eq:G_Gamma_identity}, \eqref{Kintdef}, and the functional equation \eqref{zetafunc} for $\zeta_K^*(s)$, we obtain
\begin{equation}\label{eq:Eisenstein_functional_equation}
    \widetilde{\varphi}_{ik}(\delta, s, \chi_4) = N(\delta)^{1-s} \frac{2^{30 - 28s}}{(1 - 2^{4s-5})} \sum_{j=1}^m \widetilde{\varphi}_{jk}(\delta, 2-s, \chi_4) \cdot P_{ij}(s) \cdot (2^{4s-4}-1)
\end{equation}
for all integers $1\leq i, k \leq m$.

\begin{prop}[Bounds for non-zero Fourier coefficients] \label{merovarphi}  
    Let $0 \neq \delta \in \Lambda^* = \lambda^{-6}\mathcal{O}_K$ and $1\leq i, j\leq m$ be integers. Then functions $\widetilde{\varphi}_{i j}(\delta,s,\chi_4)$ can be meromorphically continued to $\mathbb{C}$, with at most two possible (simple) poles, at $s=\frac{3}{4}$ and $s=\frac{5}{4}$. Moreover, they are bounded when $|{\Im(s)}|$ is large in vertical strips of finite width. For any $0<\varepsilon<\frac{1}{100}$, we have
    \begin{equation} \label{convexbd}
        \varphi_{i j}(\delta,s,\chi_4)  \ll_{\varepsilon, \ord_{\lambda}(\delta)} \big(N(\delta)^{\frac{1}{2}} (|s|^3+1)\big)^{\frac{3}{2}-\Re(s)+\varepsilon}
    \end{equation}
    uniformly for $1+\varepsilon \leq \Re(s) \leq \frac{3}{2}+\varepsilon$ and $|s- \frac{5}{4} | \geq \varepsilon$, while $\varphi_{i j}(\delta,s,\chi_4) \ll_{\varepsilon, \ord_{\lambda}(\delta)} 1$ for $\Re(s) \geq \frac{3}{2}+\varepsilon$. 
\end{prop}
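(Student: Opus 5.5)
The plan is to combine the meromorphic continuation of $E_i(w,s,\chi_4)$ with the functional equation \eqref{eq:Eisenstein_functional_equation}, and then interpolate with Phragm\'en--Lindel\"of.

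\textbf{Continuation and poles.} By the discussion after \eqref{Phidef}, each $\varphi_{ij}(\delta,s,\chi_4)$ continues meromorphically, and its poles lie among those of the scattering matrix entries $K(0,s)\varphi_{ij}(0,s,\chi_4)$. After multiplying by $G(s)\zeta_\lambda(4s-3)$, \cref{scatter} shows that the constant-term entries have at most a simple pole at $s=\frac54$ in $\Re(s)\ge1$. Eisenstein series are holomorphic on $\Re(s)=1$, and $\widetilde\varphi_{ij}(\delta,s,\chi_4)$ is holomorphic for $\Re(s)>2$ by absolute convergence. So in $\Re(s)\ge1$ the only possible pole is at $\frac54$. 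For $\Re(s)\le1$ we use \eqref{eq:Eisenstein_functional_equation}. The factors $P_{ij}(s)(2^{4s-4}-1)$ are entire, and $(1-2^{4s-5})^{-1}$ has poles at $s=\frac54+\frac{\pi i k}{2\log 2}$. The candidate $s=\frac34$ is the reflection of $\frac54$, and we must check that the spurious poles with $k\ne0$ cancel. I expect this to follow from the explicit form of $P_{ij}$ in \cite{Suz1}: for instance, the combination $\sum_j P_{ij}(s)(2^{4s-4}-1)\widetilde\varphi_{jk}$ should vanish at those points, or the $2$-power factor should be part of the correct local factor at $\lambda$. This bookkeeping is the step I expect to be most delicate. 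If it fails, I would instead normalize by the local Euler factor at $\lambda$ to absorb the $2^{4s}$ factors.

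\textbf{Boundedness in vertical strips.} For $\Re(s)\ge 2+\varepsilon$ the series \eqref{varphidefgen} converges absolutely, so $\varphi_{ij}\ll1$ and $\widetilde\varphi_{ij}$ decays exponentially thanks to $G(s)$. By the functional equation the same holds for $\Re(s)\le -\varepsilon$, with a factor $N(\delta)^{1-\Re(s)}$. In the middle strip, I would bound $E_i$ polynomially using (a twisted analogue of) \cref{lemma:Eis_pointwise_bound}. Taking the Fourier coefficient then gives $\varphi_{ij}(\delta,s)\ll e^{O(|s|)}$ via \eqref{rhodeltatriv} and \eqref{Kintdef}, by choosing $v\asymp |s|/|\delta|$ so that $K_{s-1}$ is not too small. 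This gives finite order, and then $G(s)$ forces $\widetilde\varphi_{ij}$ to be bounded for large $|\Im(s)|$. The same Phragm\'en--Lindel\"of argument, applied to $(s-\frac34)(s-\frac54)\widetilde\varphi_{ij}$, gives boundedness throughout the strip.

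\textbf{Convexity bound \eqref{convexbd}.} For $\Re(s)\ge\frac32+\varepsilon$ we argue as follows. The quotient $\varphi_{ij}/\zeta_\lambda(4s-3)$ is essentially a Dirichlet series of Gauss sums, $\sum g_4(\delta,c)N(c)^{-s}$ up to finitely many $\lambda$-adic modifications depending on $\ord_\lambda(\delta)$. Since $|g_4(\delta,c)|\le N(c)^{1/2}\cdot N((\delta,c))^{1/2}$, this series converges absolutely for $\Re(s)>\frac32$, and together with $|\zeta_\lambda(4s-3)|\ll1$ this gives $\varphi_{ij}\ll_{\varepsilon,\ord_\lambda(\delta)}1$.

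On $\Re(s)=\frac12-\varepsilon$ we apply \eqref{eq:Eisenstein_functional_equation}. Dividing out $G$ and $\zeta_\lambda$, the ratios $G(2-s)/G(s)$ and the zeta ratios contribute roughly $|s|^{3(2\Re(s)'-2)}$ by Stirling, where $\Re(s)'=\frac32+\varepsilon$ is the reflected point. Together with $N(\delta)^{1-s}$ this gives a bound of $(N(\delta)^{1/2}|s|^3)^{1+2\varepsilon}$. Phragm\'en--Lindel\"of on $(s-\frac54)\varphi_{ij}(\delta,s)\zeta_\lambda(4s-3)$ between these lines then interpolates linearly in the exponent, yielding exponent $\frac32-\Re(s)+\varepsilon$ on $1+\varepsilon\le\Re(s)\le\frac32+\varepsilon$. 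We can divide by $\zeta_\lambda(4s-3)$ since it is $\gg_\varepsilon1$ for $\Re(s)\ge1+\varepsilon$, because $4s-3$ then has real part at least $1+4\varepsilon$. Finally, the requirement $|s-\frac54|\ge\varepsilon$ removes the pole factor.
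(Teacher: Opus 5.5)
\textbf{Main gap: the possible pole at $s=1$.} You deduce from the holomorphy of $E_i(w,s,\chi_4)$ on $\Re(s)=1$, together with \cref{scatter}, that $\widetilde\varphi_{ij}(\delta,s,\chi_4)$ has no pole in $\Re(s)\ge1$ other than $\frac54$. But $\widetilde\varphi_{ij}(\delta,s,\chi_4)=G(s)\zeta_\lambda(4s-3)\varphi_{ij}(\delta,s,\chi_4)$, where $G(1)\neq0$ and $\zeta_\lambda(4s-3)$ has a simple pole at $s=1$. Holomorphy of $E_i$ only makes $\varphi_{ij}(\delta,s,\chi_4)$ regular there, so you also need $\varphi_{ij}(\delta,1,\chi_4)=0$. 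Regularity of $\widetilde\varphi_{ij}(0,s,\chi_4)$ at $s=1$ does not help, since the scattering entries carry the extra factor $K(0,s)=\pi/(s-1)$.

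Neither general theory nor \eqref{eq:Eisenstein_functional_equation} supplies this vanishing. Write $M_{ij}(s):=P_{ij}(s)(2^{4s-4}-1)$. A residue vector $R$ at $s=1$ must satisfy $R=-8M(1)R$. On the other hand, \eqref{varphigenell} together with $\zeta_K(0)=-\frac14$ and $\Res_{s=1}\zeta_K(s)=\frac{\pi}{4}$ gives $\boldsymbol\Phi(1,\chi_4)=-8M(1)$. So the constraint is $(I-\boldsymbol\Phi(1,\chi_4))R=0$. This forces $R=0$ exactly when the involution $\boldsymbol\Phi(1,\chi_4)$ equals $-I$, equivalently when $\boldsymbol E(w,1,\chi_4)\equiv0$.

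Spectral theory does not fix the signs of the eigenvalues of $\boldsymbol\Phi(1,\chi_4)$. This is arithmetic input that must come from Suzuki's explicit constants $c_{ij}$ or from the literature. For instance, \eqref{zeroinfty} gives the $(i_2,i_2)$ entry as $-1$, but the other entries are not covered. The paper does not prove this directly. It cites \cite[Theorem~2.1]{Dia} for the case $j=1$ and \cite[Proposition~4.3]{DDHL} for \eqref{convexbd}. It then reduces every pair $(i,j)$ to the column $j=1$ via $\varphi_{ij}(\delta,s,\chi_4)=\chi_4(\gamma_j^i)\,\varphi_{j^{(i)}1}(\delta,s,\chi_4)$, which is read off from \cref{Eis_change_cusp_lemma}.

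\textbf{Smaller points.} The step you flag as most delicate is vacuous. The poles of $(1-2^{4s-5})^{-1}$ lie on $\Re(s)=\frac54$, where you never use the functional equation, and for $\Re(s)<1$ that factor has no poles.

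Your convexity argument is sound in outline but needs four repairs.
\begin{enumerate}
\item The Phragm\'en--Lindel\"of function $(s-\frac54)\zeta_\lambda(4s-3)\varphi_{ij}(\delta,s,\chi_4)$ may have a pole at $s=1$. Multiplying by $(s-1)/(s-3)$ fixes this harmlessly, since $\varphi_{ij}$ itself is regular there.
\item \Cref{lemma:Eis_pointwise_bound} only covers $\Re(s)\ge1+\varepsilon$. Across $\Re(s)=1$ you need an a priori finite-order input, such as \cite[Theorem~3.7.1(4)]{FriedJS}.
\item It is $\varphi_{ij}$ itself, not $\varphi_{ij}/\zeta_\lambda(4s-3)$, that is the Gauss-sum Dirichlet series; compare \eqref{connect}. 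Identifying it as such for arbitrary $(i,j)$ is again easiest through the reduction to $j=1$.
\item Summing the crude bound $|g_4(\delta,c)|\le N(c)^{1/2}N((\delta,c))^{1/2}$ only gives $\ll_\varepsilon N(\delta)^{\varepsilon}$ on $\Re(s)\ge\frac32+\varepsilon$. The statement requires a constant depending only on $\varepsilon$ and $\ord_\lambda(\delta)$. For that you need the exact local factors at primes $\pi\mid\delta$; for example, $g_4(\delta,\pi)=0$ when $\pi\mid\delta$.
\end{enumerate}
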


\begin{proof}
    The first part of the statement for $j=1$, describing the analytic properties of $\widetilde{\varphi}_{i 1}(\delta,s,\chi_4)$, is given in \cite[Theorem 2.1]{Dia}. The proof is analogous to the cubic case worked out in detail in \cite[Theorem~6.1]{Pat1}, utilizing the constant terms described in \cref{scatter} and the functional equation \eqref{eq:Eisenstein_functional_equation}. The convexity bound \eqref{convexbd} for $j=1$ was then computed in \cite[Proposition 4.3]{DDHL}. 
    
    The same proofs work for the case of a general integer $1\leq j\leq m$. Since we could not locate a detailed reference, we use a trick to deduce it from the case $j=1$. Indeed, taking Fourier expansions in \cref{Eis_change_cusp_lemma}, for any $0 \neq \delta \in \Lambda^* = \lambda^{-6}\mathcal{O}_K$ we have
    \begin{equation*}
        \varphi_{ij}(\delta, s, \chi_4) = \chi_4(\gamma_j^i)\cdot \varphi_{j^{(i)} 1}(\delta, s, \chi_4)
    \end{equation*}
    for some $\gamma_j^i \in \Gamma(\lambda^4)$ and some integer $1\leq j^{(i)}\leq m$. Thus all of the desired properties follow from their counterparts in the case $j=1$.
\end{proof}

\subsection{Quartic theta functions}

Kubota proved \cite[Chapter 6]{Kub} that the only possible singularity of $E_{i}(w,s,\chi_4)$ in the half-plane $\Re(s)>1$ is a simple pole at $s=\frac{5}{4}$ (see also \cref{scatter}). The residue of this pole is known as the quartic metaplectic theta function (attached to the cusp $\eta_i$), given by
\begin{equation} \label{thetadef}
    \vartheta_{i}(w):=\Res_{s=\frac{5}{4}} E_{i}(w,s,\chi_4)
\end{equation}
 for each $w \in \mathbb{H}^3$.
Using \eqref{eisfourier}, \eqref{Kintdef}, \eqref{rhozero}, and \eqref{rhononzero} we obtain the Fourier expansion 
\begin{align} \label{thetafourier}
    \vartheta_{i}(\sigma_{j} w) = 4\pi \Psi_{ij}(0)  v^{3/4} +\frac{(2 \pi)^{5/4}}{\Gamma(5/4)} v \sum_{0 \neq \delta \in \Lambda^{*}} N(\delta)^{1/8} \Psi_{ij}(\delta) K_{1/4}(4 \pi |{\delta}|v) \check{e}(\delta z), \qquad
\end{align}
where for $\delta \in \Lambda^* = \lambda^{-6}\mathcal{O}_K$ we denote 
\begin{equation} \label{psidef}
    \Psi_{ij}(\delta):=\Res_{s=\frac{5}{4}} \varphi_{ij}(\delta,s,\chi_4).
\end{equation} 
The constant term in \eqref{thetafourier} is square-integrable on $\Gamma(\lambda^4) \backslash \mathbb{H}^3$ with respect to the measure $d \mu(w)$, thus $\vartheta_{i} \in L^2(\Gamma(\lambda^4) \backslash \mathbb{H}^3,\chi_4)$. The function $\vartheta_{i}$ is both a Laplace and Hecke eigenfunction, since so is the Eisenstein series that originated it.

\subsection{Dirichlet series for quartic Gauss sums}

For any $0 \neq \delta \in \mathcal{O}_K^* = \lambda^{-2} \mathcal{O}_K$ and $u \equiv 1 \pmod{\lambda^3}$, denote 
\begin{equation*}
    \psi(\delta,s;u) := \sum_{\substack{c \in \mathcal{O}_K \\ c \equiv u \pmod{4}}}  \frac{g_4(\delta, c)}{N(c)^s},
\end{equation*}
which converges absolutely if $\Re(s)>\frac{3}{2}$. By \cite[(2.37)]{Dia} or \cite[Proposition 2]{Suz1}, for the essential cusps $\eta_{i_1} := 0$, $\eta_{i_2} := \frac{\lambda^4}{1+\lambda^3}$, $\eta_1=\infty$ we have
\begin{equation} \label{connect}
    \psi(\delta, s; u)= \mathcal{V} \cdot
    \begin{cases}
        \varphi_{i_1 1}(\lambda^{-4} \delta,s,\chi_4) & \quad \text{if} \quad u \equiv 1 \pmod{4}, \\
        - \varphi_{i_2 1}(\lambda^{-4} \delta,s,\chi_4) & \quad  \text{if} \quad u \equiv 1+\lambda^3 \pmod{4}. 
    \end{cases}
\end{equation}
The meromorphic continuation of $\psi(\delta,s;u)$ to $\mathbb{C}$ follows from \eqref{rhotilde}, \cref{merovarphi}, and \eqref{connect}. Since $G(s) \zeta_{\lambda}(4s-3)$ is both holomorphic and zero-free for $\Re(s)>1$, the function $\psi(\delta, s;u)$ has at most one (simple) pole in the half-plane $\Re(s)>1$, at $s=\frac{5}{4}$. For $0\neq \delta \in \lambda^{-2}\mathcal{O}_K$ we deduce from \eqref{psidef} and \eqref{connect} that 
\begin{equation} \label{resD}
    \tau_4(\delta;u):=\Res_{s=\frac{5}{4}} \psi(\delta,s;u)= \mathcal{V} \cdot
    \begin{cases}
        \Psi_{i_1 1}(\lambda^{-4} \delta) & \text{if} \quad u \equiv 1 \pmod{4}, \\
        -\Psi_{i_2 1}(\lambda^{-4} \delta) & \text{if} \quad u \equiv 1+\lambda^3 \pmod{4}.
    \end{cases}
\end{equation}

No general closed form formula is known for the $\tau_4(\delta;u)$. Patterson \cite{Pat2}, with refinements by Eckhardt--Patterson \cite[Conjecture~2.11]{EckPat}, conjectured essentially that $N(\pi)^{1/4} \tau_4(\pi;u)^2$ is proportional to $\overline{\widetilde{g}_4(\pi)}$ for $\pi \equiv 1 \pmod{4}$ prime. 
Patterson's conjecture was based on heuristics for certain Rankin--Selberg convolutions of the quartic metaplectic theta function.
We refer the reader to \cite{BH} for a detailed description. 

Suzuki \cite{Suz1} obtained partial information about these residues. In what follows, let $0 \neq \delta \in \lambda^{-2}\mathcal{O}_K$ and $m \in \mathcal{O}_K$ satisfy $m \equiv 1 \pmod{\lambda^3}$. Suzuki \cite[Theorem~3]{Suz1} established the periodicity relation
\begin{equation} \label{quartrel}
    \tau_4(m^4 \delta;u)=\tau_4(\delta;u).
\end{equation}
For $(m, \delta) = 1$ and $\mu^2(m)=1$, he also showed \cite[Theorems 4, 5, 6]{Suz1} that
\begin{equation} \label{cuberel}
    \tau_4(m^3 \delta;u)=0 \quad \text{for} \quad m \neq 1,
\end{equation} 
\begin{equation} \label{squarerel}
    \tau_4(m^2 \delta;u)= \frac{\overline{g_4(\delta,m)}}{N(m)^{3/4}} \cdot \tau_4(\delta;mu) \cdot
    \begin{cases}
        -1 & \text{if} \quad m \equiv u \equiv 1+\lambda^3 \pmod{4}, \\
        1 &
        \text{otherwise},
    \end{cases}
\end{equation}
and $\tau_4(m \delta;u) =0$ unless
\begin{equation} \label{linrel2} 
    g_2 \Big(\frac{m \delta}{m_1},m_1 \Big)=N(m_1)^{1/2} \qquad \text{ for all } m_1 \equiv 1 \pmod{\lambda^3} \text{ such that } m_1 \mid m.
\end{equation}


\section{Applications of Rankin--Selberg regularization}

Recall that $\{\kappa_1:=\infty, \kappa_2, \dots,\kappa_h\}$ is the set of all cusps of $\Gamma(\lambda^4)$, and $\{\eta_1:=\infty, \eta_2, \dots, \eta_m \}$ is the subset of all $m=24$ cusps of $\Gamma(\lambda^4)$ that are essential with respect to $\chi_4$. We assume that the cusps are enumerated such that $\kappa_j = \eta_j$ for each integer $1 \leq j \leq m$.

\subsection{Products of metaplectic Eisenstein series}\label{sec:products_Eis}

For $s \in \mathbb{C}$ with $\Re(s)>1$ and $s \neq \frac{5}{4}$, an integer $1 \leq i \leq m$, and $w \in \mathbb{H}^3$, let 
\begin{equation} \label{Fidef}
F_{i}(w,s):= |E_{i}(w,s,\chi_4)|^2.
\end{equation} 
These are continuous $\Gamma(\lambda^4)$-invariant functions on $\mathbb{H}^3$.

By \cref{rmk:non_essential_cusps}, if $\Re(s)>2$, then the functions $E_i(\sigma_\kappa w, s, \chi_4)$ have exponential decay in $v$ at cusps $\kappa$ of $\Gamma(\lambda^4)$ that are not essential with respect to $\chi_4$. This decay also holds for every $s\in \C$ away from the poles of $E_i(\sigma_\kappa w, s, \chi_4)$. Indeed, this follows from \eqref{Efunceq} (see also \eqref{eq:Eisenstein_functional_equation}), estimates for the scattering matrix that follow from \cref{scatter},
the fact that $E_i(w,s,\chi_4)$ is a finite-order meromorphic function \cite[Theorem~3.7.1(4)]{FriedJS},
 and the Phragm{\'e}n--Lindel{\"o}f principle. Therefore for each $\kappa \in \{\kappa_1,\ldots,\kappa_h\} \setminus \{\eta_1,\ldots,\eta_m\}$ with scaling matrix $\sigma_\kappa$ and all $N \in \N$ we have 
\begin{equation}
    F_i(\sigma_\kappa w, s) = O_{N, s}(v^{-N}) \quad \text{as} \quad v \to \infty.
\end{equation}
In fact, if $10 \geq \Re(s) \geq 1+\varepsilon$ and $|s - \frac{5}{4}| \geq \varepsilon>0$, then the procedure above implies\footnote{Alternatively, this also follows from an argument analogous to \cref{lemma:truncated_Eis_decay} for Eisenstein series with non-trivial character.} 

\begin{equation}\label{eq:non_essential_RS_bound}
    \int_{\C/\Lambda} |{F_i(\sigma_\kappa (z, v), s)}| \, dx \, dy \ll_{\varepsilon} |s|^{1000} v^{-100} \quad \text{ for }\quad v \geq \varepsilon.
\end{equation}

In preparation to apply \cref{zagreg}, for each integer $1 \leq j \leq m$ we denote the Fourier expansion
\begin{equation}\label{eq:F_i_Fourier}
    F_i(\sigma_{j}w, s) = \sum_{\delta \in \Lambda^{*}} a^{(i)}_{j}(\delta,v, s) \check{e}(\delta z)
\end{equation}
at the essential (with respect to $\chi_4$) cusp $\eta_j$. Using \eqref{eisfourier} we obtain
\begin{equation} \label{constantFexact}
    a_{j}^{(i)}(\theta,v,s) = \sum_{\delta \in \Lambda^{*} } \rho_{i j}(\theta + \delta,v,s,\chi_4) \cdot \overline{\rho_{i j}(\delta,v,s,\chi_4)}
\end{equation}
for $\theta \in \Lambda^*$. If $0 \neq \delta \in \Lambda^*$, note from \eqref{Kintdef} and \eqref{rhononzero} that 
\begin{equation} \label{rhononzerounpack}
    \rho_{i j}(\delta,v,s,\chi_4) = \frac{(2\pi)^s}{\Gamma(s)} v |{\delta}|^{s-1}  K_{s-1}(4 \pi|{\delta}| v) \varphi_{i j}(\delta,s,\chi_4).
\end{equation}

By \cite[Proposition 9]{HM06}, for any $\varepsilon, \sigma > 0$ and $|{\Re(\nu)}| \leq \sigma$ we have the uniform decay
\begin{equation}\label{eq:Bessel_general_bound}
    K_{\nu}(x) \ll_{\sigma, \varepsilon} 
    \begin{cases}
        \big( \frac{1 + |{\Im(\nu)}|}{x} \big)^{\sigma + \varepsilon} e^{-\frac{\pi |{\Im(\nu)}|}{2}} & \quad \text{ if } 0 < x \leq 1 + \frac{\pi |{\Im(\nu)}|}{2}, \\
        x^{-1/2} e^{-x} & \quad \text{ if } 1 + \frac{\pi |{\Im(\nu)}|}{2} < x. 
    \end{cases}
\end{equation}
From \eqref{varphigenell} and \eqref{convexbd}, the estimate above for the Bessel function, and the expansions \eqref{eq:F_i_Fourier} and \eqref{constantFexact}, we deduce for any $N \in \mathbb{N}$ that
\begin{equation} \label{decay}
    F_i(\sigma_{j}w, s) = |\rho_{i j}(0,v,s,\chi_4)|^2 + O_{N,s}(v^{-N}) \quad \text{as} \quad v \to \infty.
\end{equation}
For future reference, note that if $10 \geq \Re(s) \geq 1+\varepsilon$ and $|s - \frac{5}{4}| \geq \varepsilon>0$, then being more precise in \eqref{decay} yields the uniform bound
\begin{equation}\label{eq:essential_RS_bound}
    F_i(\sigma_{j}w, s) = |\rho_{i j}(0,v,s,\chi_4)|^2 + O_{\varepsilon}\big(|{s}|^{1000} v^{-100} \big) \quad \text{for} \quad v \geq \varepsilon.
\end{equation}

Recall that \eqref{rhozero} gives
\begin{equation} \label{rhotildezero2}
    \rho_{i j}(0,v,s,\chi_4)=\mathbf{1}_{i j} v^{s}  + \frac{\pi}{s-1} v^{2-s} \varphi_{i j}(0,s,\chi_4).
\end{equation}
By \eqref{decay} and \eqref{rhotildezero2}, for any $N \in \mathbb{N}$ we get
\begin{equation} \label{constantFasymp}
    F_i(\sigma_{j}w, s) = \varphi_{j}^{(i)}(v,s) + O_{N,s}(v^{-N}) \quad \text{as} \quad v \to \infty,
\end{equation}
where
\begin{equation} \label{varphiFdefine}
    \varphi_{j}^{(i)} (v,s) = |{\rho_{i j}(0,v,s,\chi_4)}|^2=: \sum_{k=1}^{4} c^{(i)}_{jk}(s) \cdot v^{\alpha^{(i)}_{jk}(s)}
\end{equation}
for
\begin{equation}
    \begin{aligned} \label{c23}
        c^{(i)}_{j1}(s)&=\mathbf{1}_{i j},  \\
        c^{(i)}_{j2}(s)&= \overline{c^{(i)}_{j3}(s)}= \mathbf{1}_{i j} \cdot \frac{\pi}{\overline{s}-1} \cdot 
        \overline{\varphi_{i j}(0,s,\chi_4)},  \\
        c^{(i)}_{j4}(s)&= \frac{\pi^2}{|s-1|^2} \cdot |\varphi_{i j}(0,s,\chi_4)|^2, 
    \end{aligned}
\end{equation}
and
\begin{align}\label{alphas}
    \alpha^{(i)}_{j1}(s)=s+\overline{s},\qquad
    \alpha^{(i)}_{j2}(s)=\overline{\alpha^{(i)}_{j3}(s)}=2-\overline{s}+s,  \qquad
    \alpha^{(i)}_{j4}(s)=4-s-\overline{s}.  
\end{align}

Therefore the hypotheses of \cref{zagreg} are satisfied. For integers $1 \leq i,j \leq m$, we now compute Dirichlet series expressions for the Rankin--Selberg transforms $R_j^{(i)}(s, u) := R_{j}(F_{i}(\cdot,s),u)$ defined in \eqref{RStransform1}. Note that we could also consider $R_{\kappa}^{(i)}(s, u)$ for cusps $\kappa \in \{\kappa_1,\ldots \kappa_h\} \setminus \{\eta_1,\ldots,\eta_m \}$, but this is not of direct interest to us. If $\Re(u)$ is sufficiently large in terms of $\Re(s)$, then inserting \eqref{constantFexact}, \eqref{rhononzerounpack}, \eqref{constantFasymp}, and \eqref{varphiFdefine} into \eqref{RStransform1} gives
\begin{equation} \label{Rdevelop}
    R_{j}^{(i)}(s, u) = \sum_{0 \neq \delta \in \Lambda^{*}} |\varphi_{i j}(\delta,s,\chi_4)|^2 \cdot \int_{0}^{\infty} v^{1-s-\overline{s} +u} \cdot |K(\delta v,s)|^2 \, dv.
\end{equation}
Recall that $\Re(s)>1$ and $s \neq \frac{5}{4}$. By \eqref{varphiFdefine}, \eqref{alphas}, and \cref{absconvrem}, it follows that \eqref{Rdevelop} is valid for $\Re(u) > 2+2 \Re(s)$. Substitute \eqref{Kintdef} into \eqref{Rdevelop} to obtain
\begin{align}\label{eq:R_j_i_def}
    R_{j}^{(i)}(s, u) &= \beta(s, u) \cdot \sum_{0 \neq \delta \in \Lambda^{*} } \frac{|\varphi_{i j}(\delta,s,\chi_4)|^2}{N(\delta)^{u/2+1-\Re(s)}}
\end{align}
for $\Lambda^{*} = \lambda^{-6}\mathcal{O}_K$ and
\begin{equation} \label{betadef}
    \beta(s, u) := \frac{1}{(4 \pi)^{u}}\frac{(2 \pi)^{s+\overline{s}}}{ |\Gamma(s)|^2} \cdot \int_{0}^{\infty} v^{u-1}  |K_{s-1}(v)|^2 \, dv.
\end{equation}
The Bessel integral can be computed using \cite[(6.576.4)]{GR}, which shows that 
\begin{equation} \label{betaeval}
    \beta(s, u)= \frac{(2\pi)^{2 \Re(s)-u}}{8 \cdot |\Gamma(s)|^{2} \cdot \Gamma(u)} \cdot \Gamma \Big ( \frac{u+s+\overline{s}-2}{2} \Big) \Gamma \Big( \frac{u-s+\overline{s}}{2} \Big) \Gamma \Big( \frac{u+s-\overline{s}}{2} \Big) \Gamma \Big(\frac{u-s-\overline{s}+2}{2} \Big)
\end{equation}
for $\Re(u) > 2\left|\Re(s) - 1\right|$. 

We are now ready to analyze the poles and growth of $R_{j}^{(i)}(s,u)$.

\begin{lemma}[Rankin--Selberg transform of $|{E_i}|^2$]\label{Filem}
    Let $\varepsilon>0$, and $s \in \mathbb{C}$ satisfy $10 \geq \Re(s)\geq 1 + \varepsilon$ and $\big|s - \frac{5}{4}\big| \geq \varepsilon > 0$. For any integers $1\leq i, j\leq m$, the function $R_{j}^{(i)}(s, u)$ is holomorphic for $\Re(u)> 2 \Re(s)$. It satisfies the uniform bound
    \begin{equation}\label{eq:R_j_uniform_bound}
        R_{j}^{(i)}(s, u) \ll_\varepsilon |{s}|^{1000} \cdot |{u}|^{200} \quad \text{when} \quad 10 \geq \Re(u) \geq 2\Re(s) + \varepsilon.
    \end{equation}
\end{lemma}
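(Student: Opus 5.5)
We apply \cref{zagreg} and \cref{prop:RS_growth_estimate} to $F=F_i(\cdot,s)$, with $u$ as the Rankin--Selberg variable. The hypotheses of \cref{zagreg} hold by \eqref{constantFasymp}--\eqref{alphas}, together with the rapid decay at the non-essential cusps. For a non-essential cusp $\kappa$ the approximating function is $\varphi_\kappa\equiv 0$, by \eqref{eq:non_essential_RS_bound}.

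\textbf{Holomorphy.} The exponents are
\[
\alpha^{(i)}_{jk}(s)\in\{2\Re(s),\ 2+2i\Im(s),\ 2-2i\Im(s),\ 4-2\Re(s)\}.
\]
Since $\Re(s)\ge 1+\varepsilon$, their real parts are at most $2\Re(s)$, and the corresponding reflected points $2-\alpha^{(i)}_{jk}$ have real parts at most $2\Re(s)-2$. By \eqref{possiblepoles}, the only other possible poles of $R^{(i)}_j(s,u)$ in $u$ are poles of $\boldsymbol{\Phi}(u)$. For the trivial character these lie in $(1,2]$, and $2<2\Re(s)$.

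Hence $R^{(i)}_j(s,\cdot)$ is holomorphic for $\Re(u)>2\Re(s)$. If preferred, one can avoid tracking the continuation here: \cref{absconvrem} gives convergence of \eqref{Rdevelop} for $\Re(u)>2+2\Re(s)$, and \eqref{Rstarexp} supplies the continuation. Alternatively, the absolute convergence of the Dirichlet series \eqref{eq:R_j_i_def}, via \eqref{convexbd} and \eqref{betaeval}, already covers the region near $\Re(u)\ge 2\Re(s)+\varepsilon$ except close to $\Re(s)=1$, so \cref{zagreg} is the cleaner route.

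\textbf{Uniform bound.} We apply \cref{prop:RS_growth_estimate} with $u$ in place of $s$. For $10\ge\Re(u)\ge 2\Re(s)+\varepsilon$ we have $\Re(u)\ge 2+2\varepsilon$, so $u\in\Omega_{\varepsilon}$. Moreover $u$ is at distance at least $\varepsilon$ from every $\alpha_{jk}$ and every $2-\alpha_{jk}$, so $u\in\mathcal{S}_\varepsilon$. We take $A=20\ge\max|\Re(\alpha_{jk})|$, with all $n_{jk}=0$ and $r=4$.

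The parameter $M$ comes from \eqref{eq:essential_RS_bound} and \eqref{eq:non_essential_RS_bound}, giving $M\ll_\varepsilon|s|^{1000}$. For $\gamma=\max|c_{jk}|$, \eqref{c23} together with \cref{scatter} bounds $\varphi_{ij}(0,s,\chi_4)$ uniformly for $\Re(s)\ge1+\varepsilon$ and $|s-\tfrac54|\ge\varepsilon$, since $\zeta_\lambda(4s-4)/\zeta_\lambda(4s-3)$ is bounded there. This gives $\gamma\ll_\varepsilon 1$.

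The proposition then yields $R\ll(M+\gamma)|u|^{2A+100}$. With $A=20$ this is $\ll_\varepsilon|s|^{1000}|u|^{140}$, which is at most $|s|^{1000}|u|^{200}$.

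\textbf{Main obstacle.} The delicate step is justifying \eqref{eq:improved_F_asympt} in the form $\int|F-\varphi_j|\ll M v^{-100}$ for all $v\ge\varepsilon$, uniformly in $s$ with polynomial dependence, including at the non-essential cusps. This is exactly the content of \eqref{eq:essential_RS_bound} and \eqref{eq:non_essential_RS_bound}, which I take as given. A secondary point is that the implied constants in \cref{prop:RS_growth_estimate} depend on $A$ and $r$ but not on the $\alpha_{jk}$ themselves. Uniformity in $s$ therefore requires only that $\Re(\alpha_{jk})$ be bounded, which holds since $\Re(s)\le10$.
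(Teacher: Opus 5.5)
Your argument is the same as the paper's. Holomorphy for $\Re(u)>2\Re(s)$ comes from \eqref{possiblepoles}: the exponents have real parts in $\{2\Re(s),2,4-2\Re(s)\}$, and $\boldsymbol{\Phi}(u)$ has no poles with $\Re(u)>2$. The bound comes from \cref{prop:RS_growth_estimate} with $M\ll_\varepsilon|s|^{1000}$, taken from \eqref{eq:essential_RS_bound} and \eqref{eq:non_essential_RS_bound}. Your checks that $u\in\Omega_\varepsilon\cap\mathcal{S}_\varepsilon$ and that $A=20$ works are fine.

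One justification is wrong, though it does no harm to the conclusion. You assert that $\zeta_\lambda(4s-4)/\zeta_\lambda(4s-3)$ is bounded on $\{1+\varepsilon\le\Re(s)\le10,\ |s-\frac54|\ge\varepsilon\}$, and deduce $\gamma\ll_\varepsilon1$. This is false:
\begin{itemize}
\item For $\Re(s)<\frac54$ the numerator $\zeta_K(4s-4)$ lies in the critical strip and grows polynomially in $|\Im(s)|$.
\item $\zeta_K$ is unbounded even along $\Re(4s-4)=1$.
\end{itemize}
The fix is to insert the polynomial (convexity) bound for $\zeta_K(4s-4)$ into \eqref{varphigenell}. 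Since $P_{ij}(s)\ll_\varepsilon1$ and $1/\zeta_\lambda(4s-3)\ll_\varepsilon1$ there, this gives $\gamma\ll_\varepsilon|s|^{O(1)}$; the paper records $\gamma\ll_\varepsilon|s|^{1000}$. That is all you need, because $M$ already contributes $|s|^{1000}$.

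Your optional aside is also inaccurate, though you correctly do not rely on it. It claims that absolute convergence of \eqref{eq:R_j_i_def} covers $\Re(u)\ge2\Re(s)+\varepsilon$ except near $\Re(s)=1$. In fact, for $\Re(s)\le\frac32$, \eqref{convexbd} only gives convergence for $\Re(u)>3+2\varepsilon$, so the gap persists throughout that range.
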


\begin{remark}\label{rmk:improved_bound}
    It should be possible to improve the bound above. Observe from Stirling's formula that $\beta(s, u)$ decays exponentially in $u$ (restricted to a fixed vertical strip), hence so does $R_{j}^{(i)}(s, u)$ in the region of absolute convergence. Using \eqref{Rstarfunc}, \eqref{basicstate3}, \cite[Theorem~3.7.1(4)]{FriedJS},  and the Phragm{\'e}n--Lindel{\"o}f principle for $R_{j}^{(i)}(s, u) \cdot \beta(s, u)^{-1}$ should then be profitable. We refrain from this technical work since it will not be needed and requires further information on the scattering matrix $\boldsymbol{\Phi}(u)$ for trivial character.
\end{remark}

\begin{proof}
    We apply \eqref{possiblepoles} in \cref{zagreg} to determine the poles of $R_{j}^{(i)}(s, u)$. Recalling \eqref{alphas}, note that $\Re(\alpha_{jk}^{(i)}) \in \big\{2\Re(s), 2, 4-2\Re(s)\big\}$. Furthermore, $\boldsymbol{\Phi}(u)$ is holomorphic for $\Re(u) > 2$, since \eqref{varphidef} implies that $\varphi_{ij}(0, u) \ll \zeta_K(\Re(u)-1)$. Therefore \eqref{possiblepoles} implies that $R_{j}^{(i)}(s, u)$ is holomorphic for $\Re(u) > 2\Re(s)$. 

    The bound \eqref{eq:R_j_uniform_bound} follows from \cref{prop:RS_growth_estimate}, using \eqref{eq:non_essential_RS_bound} and \eqref{eq:essential_RS_bound} as inputs. Indeed, these allow us to take $M =|{s}|^{1000}$, while \eqref{c23} and \cref{scatter} give $\gamma \ll_\varepsilon |{s}|^{1000}$. Therefore \eqref{eq:RS_growth_estimate} implies
    \begin{equation*}
        |{R_{j}^{(i)}(s, u)}| = |{R_{j}(F_{i}(\cdot,s),u)}| \ll_\varepsilon |{s}|^{1000} \cdot |{u}|^{200}
    \end{equation*}
    for $10 \geq \Re(u) \geq 2\Re(s) + \varepsilon$, since in that range we have $u \in \Omega_\varepsilon$, as $\Phi(\cdot)$ has no poles with real part strictly 
    greater than $2$.
\end{proof}

\subsection{Products of metaplectic theta functions}

For $w \in \mathbb{H}^3$ and any integer $1\leq i\leq m$, let 
\begin{equation} \label{Hidef}
    H_{i}(w):=|\vartheta_{i}(w)|^2.
\end{equation}
This is a family of continuous $\Gamma(\lambda^4)$-invariant functions on $\mathbb{H}^3$.

After expressing the residue of $E_{i}(w,s,\chi_4)$ at $s = \frac{5}{4}$ via a contour integral, the same arguments as in \cref{sec:products_Eis} show that $H_i(\sigma_\kappa w)$ decays exponentially in $v$ at a cusp $\kappa$ of $\Gamma(\lambda^4)$ that is non-essential with respect to $\chi_4$. Therefore, if $\kappa \in \{\kappa_1,\ldots,\kappa_h\} \setminus \{\eta_1,\ldots,\eta_m\}$, then for every $N \in \N$ and $\varepsilon > 0$ we have
\begin{equation}\label{eq:non_ess_theta_RS_decay}
    H_i(\sigma_\kappa w) = O_{N, \varepsilon}(v^{-N}) \quad \text{for} \quad v \geq \varepsilon.
\end{equation}

For each integer $1\leq j\leq m$, the Fourier coefficients of $H_i(\sigma_j w)$ are obtained from \eqref{thetafourier}. The bound for $\varphi_{ij}(\delta, s, \chi_4)$ given in \cref{merovarphi} implies $\Psi_{ij}(\delta) \ll_\varepsilon N(\delta)^{1/8+\varepsilon}$ via contour integration. Thus once again, for any $N \in \N$ and $\varepsilon>0$, the decay of the Bessel function gives 
\begin{equation} \label{constantGasymp}
    H_i(\sigma_j w) = (4\pi)^2 v^{3/2} |\Psi_{i j}(0)|^2 + O_{N, \varepsilon}(v^{-N}) \quad \text{for} \quad v \geq \varepsilon,
\end{equation}
where $\Psi_{i j}(0)$ is given in \eqref{psidef}. 

Thus, for integers $1 \leq i,j \leq m$, we can apply \cref{zagreg} to obtain the meromorphic continuation to $u \in \C$ of the Rankin--Selberg transforms $R_j^{(i)}(u) := R_{j}(H_{i},u)$ defined in \eqref{RStransform1}. Note that we could also consider $R_{\kappa}^{(i)}(u)$ for cusps $\kappa \in \{\kappa_1,\ldots \kappa_h\} \setminus \{\eta_1,\ldots,\eta_m \}$, but this is not of direct interest to us. Using \eqref{thetafourier} and \eqref{constantGasymp} in \eqref{RStransform1} leads to 
\begin{equation} \label{RGdevelop}
    R_j^{(i)}(u) = \gamma_4(u) \cdot \sum_{0 \neq \delta \in \Lambda^{*}} \frac{N(\delta)^{1/4} |\Psi_{i j}(\delta)|^2}{N(\delta)^{u/2}},
\end{equation}
where $\Lambda^{*} = \lambda^{-6}\mathcal{O}_K$, and
\begin{equation} \label{gammadef}
    \gamma_4(u) :=\frac{1}{(4 \pi)^u} \frac{(2 \pi)^{5/2}}{\Gamma(5/4)^2} \cdot \int_0^{\infty} v^{u-1} K_{1/4}(v)^2 \, dv.
\end{equation}
By \cref{absconvrem} and \eqref{constantGasymp} it follows that \eqref{RGdevelop} is valid for $\Re(u)> \frac{7}{2}$.
We compute the Bessel integral via \cite[(6.576.4)]{GR} exactly as before, obtaining
\begin{align} \label{gammaeval}
    \gamma_4(u)= \frac{(2\pi)^{5/2-u} }{8\cdot \Gamma(5/4)^{2} \cdot \Gamma(u)} \cdot \Gamma \Big( \frac{2u+1}{4} \Big) \Gamma \Big( \frac{u}{2} \Big)^2 \Gamma \Big( \frac{2u-1}{4} \Big)
\end{align}
for $\Re(u)> \frac{1}{2}$. 

\begin{lemma}[Rankin--Selberg transform of $|{\vartheta_i}|^2$] \label{Gilem}
    There exists $0<\nu \leq 1/2$ such that the following holds. For any integers $1\leq i, j\leq m$, the function $R_{j}^{(i)}(u)$ given in \eqref{RGdevelop} is holomorphic for $\Re(u)>2 - \nu$ and $u \neq 2$. It has at most a simple pole at $u = 2$, with residue
    \begin{equation}\label{eq:theta_RS_residue}
        \Res_{u=2} R_{j}^{(i)}(u) = \frac{1}{\vol(\Gamma(\lambda^4) \backslash \mathbb{H}^3)} \cdot \int_{\Gamma(\lambda^4) \backslash \mathbb{H}^3} |{\vartheta_i(z)}|^2 \, d \mu(z) =: \norm{\vartheta_i}^2.
    \end{equation}
    Furthermore, for each $\varepsilon > 0$, it satisfies the bound
    \begin{equation}\label{eq:theta_RS_bound}
        R_{j}^{(i)}(u) \ll_\varepsilon |u|^{200} \qquad \text{when} \quad 10 \geq \Re(u) \geq 2-\nu + \varepsilon \quad \text{and} \quad |u-2| \geq \varepsilon.
    \end{equation}
\end{lemma}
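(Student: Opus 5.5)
We apply \cref{zagreg} to $F = H_i$ on $\Gamma = \Gamma(\lambda^4)$, with $h$ the total number of cusps. First we check the hypotheses. By \eqref{constantGasymp}, at each essential cusp $\eta_j$ we have $\varphi_j(v) = (4\pi)^2|\Psi_{ij}(0)|^2 v^{3/2}$. This is a single term with $\alpha_{j1} = \frac{3}{2}$, $n_{j1}=0$ and $c_{j1} = (4\pi)^2|\Psi_{ij}(0)|^2$. By \eqref{eq:non_ess_theta_RS_decay}, at the non-essential cusps we have $\varphi_\kappa \equiv 0$. Continuity and $\Gamma$-invariance of $H_i$ are clear, so \eqref{cuspasymp} holds.

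\cref{zagreg} then gives the meromorphic continuation. By \eqref{possiblepoles}, the only possible poles are at $u \in \{\frac{3}{2}, \frac{1}{2}\}$ and at poles of $\boldsymbol{\Phi}(u)$ for the trivial character on $\Gamma(\lambda^4)$. The hypothesis $\alpha_{jk}\notin\{0,2\}$ holds, so the pole at $u=2$ is at most simple. Since $\alpha = \frac{3}{2} < 2$, formula \eqref{Rstares2} gives the residue $\vol(\Gamma(\lambda^4)\backslash\mathbb{H}^3)^{-1}\int |\vartheta_i|^2\,d\mu$, which is \eqref{eq:theta_RS_residue}. The $j$-th row of $\boldsymbol{R}$ is $R_j(H_i,u) = R^{(i)}_j(u)$ for essential $j$.

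Next we choose $\nu$. By \cref{standard}, the poles of $\boldsymbol{\Phi}(u)$ with $\Re(u) > 1$ are finitely many, all real, simple, and contained in $(1,2]$. We take $\nu \in (0,\frac12]$ so small that the only pole of $\boldsymbol{\Phi}$ in $[2-\nu, 2]$ is $u=2$; this is possible by finiteness. The point $\frac{3}{2}$ then lies outside $\Re(u) > 2-\nu$. Holomorphy in $\Re(u) > 2-\nu$, $u\neq 2$ follows. Since $\nu$ depends only on $\Gamma(\lambda^4)$, it is uniform in $i,j$.

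For the bound \eqref{eq:theta_RS_bound}, we invoke \cref{prop:RS_growth_estimate} with $A = 2$ (which covers $|\Re\alpha|\le 2$ and $n_{jk}=0$) and $r=1$. We need \eqref{eq:improved_F_asympt} with $M \ll 1$. At non-essential cusps this is \eqref{eq:non_ess_theta_RS_decay}. At essential cusps it is \eqref{constantGasymp} with $N = 100$, after integrating over the compact set $\C/\Lambda$. Also $\gamma = \max_j (4\pi)^2|\Psi_{ij}(0)|^2 \ll 1$. The proposition then gives $R_j^{(i)}(u) \ll_\varepsilon |u|^{104}\le |u|^{200}$ on $\mathcal{S}_\varepsilon$.

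It remains to see that our region lies in $\mathcal{S}_\varepsilon$, possibly after shrinking $\varepsilon$. Take $2-\nu+\varepsilon\le \Re(u)\le 10$ with $|u-2|\ge\varepsilon$. This region avoids the poles of $\boldsymbol{\Phi}$ by the choice of $\nu$, and stays $\ge \varepsilon$ from $\frac12$ and $\frac32$ since $\nu\le\frac12$. It also has $\Re(u)\ge 1+\varepsilon$.

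The main point needing care is that \cref{prop:RS_growth_estimate} and \cref{omegalem} define $\Omega_\varepsilon$ by distance $\varepsilon$ from every pole of $\boldsymbol{\Phi}$. In our region only the pole at $2$ is nearby, and that is excluded explicitly. The other poles of $\boldsymbol{\Phi}$ in $(1,2-\nu)$ are at distance $\ge \varepsilon$ from the region automatically, because the region requires $\Re(u)\ge 2-\nu+\varepsilon$.
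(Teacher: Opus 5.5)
Your proposal is correct and follows essentially the same route as the paper. You apply \cref{zagreg} to $H_i$ with the single exponent $\alpha_{jk}=\tfrac32$, so the only possible poles are at $\tfrac12$, $\tfrac32$ and the poles of $\boldsymbol{\Phi}$. You choose $\nu$ using the finiteness of the real poles of $\boldsymbol{\Phi}$ in $(1,2]$, read off the residue at $u=2$ from the case $\alpha<2$ of \cref{zagreg}, and get the bound from \cref{prop:RS_growth_estimate} with $M\ll_\varepsilon 1$ and $\gamma\ll 1$. Your explicit check that the region lies in $\mathcal{S}_\varepsilon$ is a detail the paper leaves implicit, and no shrinking of $\varepsilon$ is actually needed there.
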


\begin{remark}
    It should be possible to improve  \eqref{eq:theta_RS_bound}, as in \cref{rmk:improved_bound}.
\end{remark}

\begin{proof} 
    We deduce from \eqref{possiblepoles} in \cref{zagreg} that $R_j^{(i)}(u) := R_{j}(H_{i},u)$ is meromorphic in $\C$, and by \eqref{constantGasymp} its only possible poles are at $\big\{\frac{1}{2}, \frac{3}{2}\big\}$ or at poles of $\boldsymbol{\Phi}(u)$. By \cref{trivial_char_section}, the poles of $\boldsymbol{\Phi}(u)$ with $\Re(u) > 1$ are finite in number and satisfy $u \in (1, 2]$. Thus if $\nu > 0$ is sufficiently small (in particular $\nu \leq \frac{1}{2}$), the only possible pole of $R_j^{(i)}(u)$ with $\Re(u) > 2 - \nu$ is at $u=2$. By \eqref{Rstares2} (where in our case $\alpha = \frac{3}{2}$), this pole is at most simple with residue given in \eqref{eq:theta_RS_residue}. 
    
    Finally, the bound \eqref{eq:theta_RS_bound} follows directly from \cref{prop:RS_growth_estimate}. Indeed, using \eqref{eq:non_ess_theta_RS_decay} and \eqref{constantGasymp} as inputs, we have $M \ll_\varepsilon 1$ and $\gamma \ll |\Psi_{i j}(0)|^2 \ll 1$.
\end{proof}


\section{\texorpdfstring{$L^2$}{}-bounds for Fourier coefficients}

In this section we apply the previous results on Rankin--Selberg regularization to Fourier coefficients of certain quartic Eisenstein series and theta functions. This will provide the necessary average bounds for our analysis of Dirichlet series with quartic Gauss sums.

To avoid convergence issues, it will be convenient to consider $V, W : \R_{>0} \to \R$ given by
\begin{equation}\label{eq:V_W_defs}
    V(u) :=\frac{2}{\sqrt{\pi}}\int_{-\infty}^{\frac12\log(1/u)}e^{-r^2}\,dr \qquad \text{and} \qquad W(u) := - (\log{u}) \cdot e^{-(\log{u}+2)^2/4}.
\end{equation}
These are bounded smooth functions, with $V(u) \geq 1$ for $0<u\leq1$ and $V(u)>0$ for all $u>0$,
while $W(u) > 0$ for $0 < u < 1$ and $W(u) < 0$ for $u > 1$. The main point is that they have Mellin transforms
\begin{equation} \label{Vtilde}
    \widetilde{V}(z) := \int_0^\infty
     V(u) u^{z - 1} \, du = \frac{2e^{z^2}}{z} \quad \text{for }\Re(z)>0,
\end{equation}     
and 
\begin{equation} \label{Wtilde}     
\widetilde{W}(z) := \int_0^\infty W(u) u^{z - 1} \, du =2 \sqrt{\pi} (2-2z) e^{z^2-2z} \quad \text{for } z \in \mathbb{C},
\end{equation}
which both decay very rapidly in fixed vertical strips (away from the pole at $z=0$ for $\widetilde{V}(z)$).
The decay in both cases is faster than any linear exponential.

\begin{prop}[Second moment of $\psi$] \label{prop:psi_second_moment}
    Let $\varepsilon>0$ and $s\in \C$ be such that $2 + \varepsilon \geq \Re(s) \geq 1+\varepsilon$ and $\big|s - \frac{5}{4}\big| \geq \varepsilon > 0$. Then for any $A \geq 1$ and $u \equiv 1 \pmod{\lambda^3}$, we have
    \begin{equation*}
        \sum_{\substack{0 \neq a \in \mathcal{O}^*_K \\ N(a) \leq A}} |\psi(a, s; u)|^2 \ll_{\varepsilon} |{s}|^{2000}  A^{1+\varepsilon}.
    \end{equation*}
\end{prop}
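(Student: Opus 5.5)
By \eqref{connect}, $\psi(a,s;u)$ equals $\pm\mathcal{V}\,\varphi_{i1}(\lambda^{-4}a,s,\chi_4)$ with $i\in\{i_1,i_2\}$, and $a\mapsto\lambda^{-4}a$ maps $\mathcal{O}_K^*=\lambda^{-2}\mathcal{O}_K$ into $\Lambda^*=\lambda^{-6}\mathcal{O}_K$. So it suffices to prove
\[
\sum_{\substack{0\neq\delta\in\Lambda^*\\ N(\delta)\leq A'}}|\varphi_{ij}(\delta,s,\chi_4)|^2\ll_\varepsilon |s|^{2000}A'^{1+\varepsilon}
\]
for fixed $i$ and $j=1$, with $A'\asymp A$. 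The plan is to obtain this from the Rankin--Selberg transform $R_j^{(i)}(s,u)$ of $F_i=|E_i(\cdot,s,\chi_4)|^2$, using \eqref{eq:R_j_i_def} and \cref{Filem}, together with a smooth weight and contour shifting.

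Since the weight $V$ from \eqref{eq:V_W_defs} is positive and satisfies $V\geq1$ on $(0,1]$, the sum is bounded by
\[
\sum_{0\neq\delta\in\Lambda^*}|\varphi_{ij}(\delta,s,\chi_4)|^2\,V\big(N(\delta)/A\big).
\]
Write $\sigma=\Re(s)$ and set $D(s,w):=\sum_\delta|\varphi_{ij}(\delta,s,\chi_4)|^2N(\delta)^{-w}$. By \eqref{eq:R_j_i_def}, $D(s,w)=R_j^{(i)}(s,2w+2\sigma-2)/\beta(s,2w+2\sigma-2)$. Mellin inversion gives
\[
\sum_{\delta}|\varphi_{ij}(\delta,s,\chi_4)|^2\,V\big(N(\delta)/A\big)=\frac{1}{2\pi i}\int_{(c)}\widetilde{V}(z)\,A^z\,D(s,z)\,dz,
\]
first for $c$ large, where everything converges absolutely. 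By \cref{Filem}, $R_j^{(i)}(s,u)$ is holomorphic for $\Re(u)>2\sigma$, which corresponds to $\Re(z)>1$. I would shift the contour to $\Re(z)=1+\varepsilon/2$, i.e. $\Re(u)=2\sigma+\varepsilon$. The only pole of $\widetilde{V}$ is at $z=0$, which is not crossed.

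On the new line, the integral is bounded by
\[
A^{1+\varepsilon}\int|\widetilde{V}(z)|\,|R_j^{(i)}(s,u)|\,|\beta(s,u)|^{-1}\,|dz|.
\]
\cref{Filem} gives $|R|\ll|s|^{1000}|u|^{200}$; this needs $\Re(u)=2\sigma+\varepsilon\leq 10$, which holds since $\sigma\leq 2+\varepsilon$. To bound $\beta(s,u)^{-1}$ I would use \eqref{betaeval} and Stirling. The factor $|\Gamma(s)|^2\Gamma(u)$ in the numerator of $\beta^{-1}$ grows at most like $e^{\pi|\Im s|}e^{\pi|\Im u|/2}$ times polynomial factors. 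The four Gamma factors in the denominator have arguments with real parts $(\Re u+2\sigma-2)/2$, $\Re u/2$, $\Re u/2$ and $(\Re u-2\sigma+2)/2$. The last of these equals $(2+\varepsilon)/2$ on our line, so all four are bounded away from poles. In the worst case they decay like $e^{-\pi(|\Im u|+|2\Im s|)/2}$, so $\beta^{-1}$ grows at most exponentially in $|\Im s|$ and $|\Im u|$. The exponential growth in $\Im u$ is beaten by $\widetilde{V}(z)=2e^{z^2}/z$, which decays like $e^{-(\Im z)^2}$.

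I expect this Gamma bookkeeping to be the main obstacle: the factor $e^{\pi|\Im s|}$ is not polynomial in $|s|$. To handle it I would balance the Stirling exponents exactly. The exponent in the numerator of $\beta^{-1}$ is $\pi|\Im s|+\tfrac{\pi}{2}|\Im u|$. The exponent in the denominator is $\tfrac{\pi}{4}\big(2|\Im u|+|\Im u+2\Im s|+|\Im u-2\Im s|\big)\geq\tfrac{\pi}{4}\big(2|\Im u|+4|\Im s|\big)$. So the exponentials cancel exactly, and only polynomial factors in $|s|$ and $|u|$ remain. These polynomial losses are absorbed into $|s|^{2000}$ (total exponent at most $1000$ plus a few) and into the decay of $\widetilde{V}$ in $\Im z$, which gives the bound $|s|^{2000}A^{1+\varepsilon}$. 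If the polynomial powers turn out to be larger, I would rescale $\varepsilon$ or simply note that $1000+O(1)\leq 2000$.
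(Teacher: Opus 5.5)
Your proposal follows the paper's proof step by step: majorize by the $V$-smoothed sum, Mellin-invert, identify the Dirichlet series with $R_1^{(i_u)}(s,z')\,\beta(s,z')^{-1}$ via \eqref{eq:R_j_i_def}, shift to just right of $\Re(z)=1$, and conclude from \cref{Filem}, a Stirling lower bound for $\beta$, and the decay of $\widetilde{V}$. The overall conclusion is right, but the Stirling step, which you identify as the main obstacle, is argued with the signs reversed and needs correcting.

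The factor $|\Gamma(s)|^2|\Gamma(u)|$ decays like $e^{-N}$, with $N=\pi|\Im s|+\frac{\pi}{2}|\Im u|$; it does not grow. The four Gamma factors decay like $e^{-D}$, with $D$ as you wrote. Hence $|\beta(s,u)|^{-1}\approx e^{D-N}$ up to polynomial factors. Your inequality $D\geq N$ only shows that $|\beta|^{-1}$ is not exponentially small, which is the wrong direction.

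What you need is the upper bound $|\Im u+2\Im s|+|\Im u-2\Im s|\leq 2|\Im u|+4|\Im s|$, which gives $D-N\leq\frac{\pi}{2}|\Im u|=\pi|\Im z|$. More precisely, $D-N=\frac{\pi}{2}\max\big(0,|\Im u|-2|\Im s|\big)$. So the exponential in $\Im s$ does cancel, but not "exactly": a factor of size at most $e^{\pi|\Im z|}$ survives. That factor is harmless against $|\widetilde{V}(z)|\ll e^{-(\Im z)^2}$, and this is precisely the paper's coarse bound $|\beta(s,z')|\gg_\varepsilon |s|^{-1000}e^{-1000|z|}$.

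The remaining polynomial factors also behave as you expect. On your line all four Gamma arguments have real parts in a compact subset of $(0,\infty)$ with $\Re(\cdot)-\frac12>0$, so the only polynomial loss is $(1+|\Im s|)^{2\Re(s)-1}(1+|\Im u|)^{\Re(u)-1/2}$. Combined with $|s|^{1000}|u|^{200}$ from \cref{Filem}, this fits within $|s|^{2000}$. With this correction your proof is complete.
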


\begin{proof}
    Recall that $\mathcal{O}_K^* = \lambda^{-2} \mathcal{O}_K$. For the function $V$ defined in \eqref{eq:V_W_defs}, it suffices to show that
    \begin{equation} \label{eq:psi_lindelof_replacement}
        \sum_{0 \neq a \in \mathcal{O}^*_K} |\psi(a, s; u)|^2 \cdot V \Big(\frac{N(a)}{A}\Big) \ll_{\varepsilon} |{s}|^{2000} A^{1+\varepsilon}.
    \end{equation}
    By \eqref{connect} we have $|\psi(a, s; u)|^2 = \mathcal{V}^{2}\cdot |\varphi_{i_u 1}(\lambda^{-4} a, s, \chi_4)|^2$ for the integer $1 \leq i_u \leq m$ corresponding to the cusp
    \begin{equation}\label{eq:i_u_cusp_def}
        \eta_{i_u} = \begin{cases}
            0 & \text{if } u \equiv 1 \pmod{4},\\
            \frac{\lambda^4}{1+\lambda^3} & \text{if } u \equiv 1 + \lambda^3 \pmod{4}.
        \end{cases}
    \end{equation}
    Using Mellin inversion and denoting $\delta := \lambda^{-4}a$, we have
    \begin{equation*}
        \sum_{0 \neq a \in \mathcal{O}^*_K} |\psi(a, s; u)|^2 \cdot V\Big(\frac{N(a)}{A}\Big) = \frac{\mathcal{V}^{2}}{2\pi i} \int\limits_{(2 + \varepsilon)} \widetilde{V}(z)  \Big(\frac{A}{N(\lambda^4)}\Big)^{z} \sum_{0 \neq \delta \in \lambda^{-6} \mathcal{O}_K} \frac{|\varphi_{i_u 1}(\delta, s, \chi_4)|^2}{N(\delta)^z} \, dz.
    \end{equation*}
    By \eqref{eq:R_j_i_def}, the sum on the right is equal to $R_{1}^{(i_u)}(s, z') \cdot \beta(s, z')^{-1}$ for $z' := 2z + 2\Re(s) - 2$, where we exchanged the order of summation and integration by the rapid decay of $\widetilde{V}$ and absolute convergence, since $\Re(z') > 2 + 2\Re(s)$.

    Now shift the line of integration to $\Re(z) = 1+\varepsilon$. Convergence is guaranteed by the polynomial bound \eqref{eq:R_j_uniform_bound}, which implies $R_1^{(i_u)}(s, z') \ll_{\varepsilon} |{s}|^{1000} \cdot |{z}|^{200}$ for $2 + \varepsilon \geq \Re(z) \geq 1+\varepsilon$. Indeed, it suffices to use a very coarse bound $|{\beta(s, z')}| \gg_\varepsilon |{s}|^{-1000} e^{-1000 |{z}|}$, which follows from Stirling's formula applied to \eqref{betaeval}. The very rapid decay of the Mellin transform $\widetilde{V}(z)$ in \eqref{Vtilde} then ensures absolute convergence of the integral and leads to the desired bound \eqref{eq:psi_lindelof_replacement}. 
\end{proof}

\begin{prop}[Second moment of $\tau_4$] \label{prop:tau_4_second_moment}
    For any $u \equiv 1 \pmod{\lambda^3}$ and sufficiently large $A\geq 1$ we have
    \begin{equation*}
        \sum_{\substack{0 \neq a \in \mathcal{O}^*_K \\ N(a) \leq A}} |\tau_4(a ; u)|^2 \asymp A^{3/4}.
    \end{equation*}
\end{prop}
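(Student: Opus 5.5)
The plan is to use \cref{Gilem} with $i = i_u$ (the cusp from \eqref{eq:i_u_cusp_def}) and $j = 1$. By \eqref{resD} we have $\tau_4(a;u) = \pm \mathcal{V}\, \Psi_{i_u 1}(\lambda^{-4} a)$. Substituting $\delta = \lambda^{-4} a \in \lambda^{-6}\mathcal{O}_K = \Lambda^*$ identifies $\sum_{a} |\tau_4(a;u)|^2 N(a)^{-w}$, up to the factor $\mathcal{V}^2 N(\lambda^4)^{-w}$, with the Dirichlet series in \eqref{RGdevelop} once we set $u' = 2w + \tfrac12$. Concretely, for $\Re(u') > \tfrac72$,
\begin{equation*}
    \sum_{0\neq \delta\in\Lambda^*} \frac{|\Psi_{i_u 1}(\delta)|^2}{N(\delta)^{(u'-1/2)/2}} = \frac{R^{(i_u)}_1(u')}{\gamma_4(u')}.
\end{equation*}
The sum runs only over $\delta \in \lambda^{-6}\mathcal{O}_K$, and $\Psi_{i_u1}(\delta)$ may be nonzero for $\delta \notin \lambda^{-4}\lambda^{-2}\mathcal{O}_K$. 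This is harmless, since $\lambda^{-4}\mathcal{O}_K^* = \lambda^{-6}\mathcal{O}_K$ exactly.

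\textbf{Asymptotic.} Take a smooth weight, namely the function $V$ from \eqref{eq:V_W_defs}, and write
\begin{equation*}
    S_V(A) := \sum_{a} |\tau_4(a;u)|^2 V\big(N(a)/A\big)
\end{equation*}
as a Mellin integral of $\widetilde V(w)\, A^{w}$ against the above series, on $\Re(w) = 2$. The pole $u'=2$ corresponds to $w = \tfrac34$. Shift the contour to $\Re(u') = 2 - \nu + \varepsilon$, i.e.\ $\Re(w) = \tfrac34 - \tfrac{\nu}{2} + \tfrac{\varepsilon}{2}$. This is justified by the polynomial bound \eqref{eq:theta_RS_bound}, the Stirling lower bound for $\gamma_4$ from \eqref{gammaeval}, which is nonvanishing with at most exponential decay in this strip, and the superexponential decay of $\widetilde V$. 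We also pass the pole of $\widetilde V$ at $w = 0$ only if we shift further, which we do not. The outcome is
\begin{equation*}
    S_V(A) = c_u A^{3/4} + O\big(A^{3/4-\nu/2+\varepsilon}\big),
\end{equation*}
where
\begin{equation*}
    c_u = \mathcal{V}^2\, N(\lambda^4)^{-3/4}\, \widetilde V(\tfrac34)\, \frac{\|\vartheta_{i_u}\|^2}{2\,\gamma_4(2)}.
\end{equation*}
Here $\gamma_4(2) > 0$ and $\widetilde V(\tfrac34) > 0$.

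\textbf{Main obstacle: $c_u \neq 0$.} This requires $\vartheta_{i_u} \not\equiv 0$, which follows because $\Psi_{i_u 1}(0) \neq 0$: by \eqref{zeroinfty}, $\varphi_{i_1 1}(0,s,\chi_4)$ has a genuine pole at $s = \tfrac54$ coming from $\zeta_\lambda(4s-4)$, with nonzero residue since $\zeta_\lambda(4s-3)$ is finite and nonzero there. For $i_2$ the analogous fact is stated for $\varphi_{i_2 i_2}$ rather than $\varphi_{i_2 1}$. But $\vartheta_{i_2}(\sigma_{i_2} w)$ then has a nonzero constant term, which still gives $\vartheta_{i_2} \neq 0$. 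Hence $\|\vartheta_{i_u}\|^2 > 0$.

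\textbf{From smooth to sharp.} For the upper bound, use $\mathbf 1_{[0,1]} \le V$, which gives $\sum_{N(a)\le A} |\tau_4(a;u)|^2 \le S_V(A) \ll A^{3/4}$. For the lower bound, $V$ is positive but not compactly supported, so we cannot compare directly with the sharp sum. Instead, bound the tail dyadically using the upper bound just obtained:
\begin{equation*}
    \sum_{N(a) > A} |\tau_4|^2\, V\big(N(a)/A\big) \le \sum_{k\ge 0} (2^{k+1}A)^{3/4} \sup_{t \ge 2^k} V(t).
\end{equation*}
Since $V(t) \le e^{-(\log t)^2/4}$ for $t \ge 1$, the right-hand side is at most $\eta A^{3/4}$ if we rescale. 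Take $A' = A/K$ for a large constant $K$. Then
\begin{equation*}
    \sum_{N(a)\le A} |\tau_4|^2 \ge \frac{S_V(A') - \mathrm{tail}}{\sup V},
\end{equation*}
where the tail now involves $V(N(a)/A')$ over $N(a) > A$, i.e.\ $V$ evaluated at arguments $\ge K$, which is tiny. This yields $\gg c_u (A/K)^{3/4}$ for $A$ sufficiently large.
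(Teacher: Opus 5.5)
Your proposal is correct and follows essentially the same route as the paper: Mellin inversion against the Dirichlet series in \eqref{RGdevelop} with $u'=2w+\tfrac12$, a contour shift past the simple pole at $u'=2$ using \cref{Gilem}, nonvanishing of $\norm{\vartheta_{i_u}}^2$ from the constant terms in \eqref{zeroinfty}, and the upper bound from the majorant $V$.

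The only difference is in the lower bound. There the paper does not rescale or estimate a dyadic tail. Instead it uses the sign-changing minorant $\frac{1}{1000}W$ from \eqref{eq:V_W_defs}, which is positive on $(0,1)$ and negative on $(1,\infty)$, so a single contour shift gives the lower bound directly without first proving the upper bound. Your version also works: the tail is $\ll e^{-(\log K)^2/4}A^{3/4}$, and this is beaten by the main term $c_u K^{-3/4}A^{3/4}$ once $K$ is large.
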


\begin{proof}
    By \eqref{resD} we have $|{\tau_4(a; u)}|^2 = \mathcal{V}^{2}\cdot |\Psi_{i_u 1}(\lambda^{-4} a)|^2$ for the integer $1 \leq i_u \leq m$ given in \eqref{eq:i_u_cusp_def}. 
    
    Let us first prove the lower bound for the sum over $a$. Recalling that $\mathcal{O}_K^* = \lambda^{-2} \mathcal{O}_K$ and $\Lambda^* = \lambda^{-6} \mathcal{O}_K$, then for $W$ given in \eqref{eq:V_W_defs} and $\delta := \lambda^{-4} a$ we obtain
    \begin{align}
        & \sum_{\substack{0 \neq a \in \mathcal{O}^*_K \\ N(a) \leq A}} |\tau_4(a ; u)|^2 \geq \frac{1}{1000} \sum_{0 \neq a \in \mathcal{O}^*_K } |\tau_4(a ; u)|^2 \cdot W \Big( \frac{N(a)}{A} \Big) \nonumber \\
        = & \frac{1}{1000} \frac{\mathcal{V}^{2}}{2\pi i} \int\limits_{(2 + \varepsilon)} \widetilde{W}(z)  \Big(\frac{A}{N(\lambda^4)}\Big)^{z} \sum_{0 \neq \delta \in \lambda^{-6} \mathcal{O}_K} \frac{|\Psi_{i_u 1}(\delta)|^2}{N(\delta)^z} \, dz. \label{eq:lower_bd_theta_RS_interm}
    \end{align}
By \eqref{RGdevelop}, the remaining sum is $R_1^{(i_u)}(z') \cdot \gamma_4(z')^{-1}$ for $z' := 2z+\frac{1}{2}$, where exchanging the order of summation and integration is justified since we are in the region of absolute convergence $\Re(z') > \frac{7}{2}$. Shift the line of integration to $\Re(z') = 2-\frac{\nu}{2}$, where $0< \nu \leq \frac{1}{2}$ is given in \cref{Gilem}, hence $\Re(z) = \frac{3 -\nu}{4}$. By \cref{Gilem} we pick up a simple pole at $z'=2$, corresponding to $z=\frac{3}{4}$, whose contribution to \eqref{eq:lower_bd_theta_RS_interm} is
    \begin{equation}\label{eq:main_term_theta}
    \frac{1}{1000} \frac{\mathcal{V}^2 \widetilde{W}(3/4)}{N(\lambda^3)} A^{3/4} \cdot \frac{1}{2} \Res_{z'=2} R_1^{(i_u)}(z') \cdot \gamma_4(z')^{-1} = \frac{1}{1000} \frac{16 \widetilde{W}(3/4)}{\gamma_4(2)} \norm{\vartheta_{i_u}}^2 A^{3/4} \geq c A^{3/4}
    \end{equation}
    for some absolute constant $c>0$. Note that in the above display we used the fact that $\vartheta_{i_u}$ is not identically zero, since $E_{i_u}$ must have a pole at $s = \frac{5}{4}$, due to its constant terms computed in \eqref{zeroinfty}. We also used that $\widetilde{W}(3/4)>0$ and $\gamma_4(2)>0$.

Along the vertical strip relevant for the shifting procedure, we have the polynomial bound $R_1^{(i_u)}(z')  \ll_\varepsilon |{z}|^{200}$ by \eqref{eq:theta_RS_bound}, and the coarse bound $\gamma_4(z') \gg_\varepsilon e^{-1000|{z}|}$. The very rapid decay of the Mellin transform $\widetilde{W}(z)$ in \eqref{Wtilde} implies absolute convergence of the integral. We conclude that the remaining contour at $\Re(z) = \frac{3-\nu}{4}$ contributes $\ll A^{3/4 - \nu/4}$. Thus if $A$ is sufficiently large, this contribution is subsumed by the main term \eqref{eq:main_term_theta}, and the lower bound follows.

    The proof of the upper bound for the sum over $a$ is analogous. One then replaces
    the minorant $\frac{1}{1000} W$ with the majorant $V$.
\end{proof}

\begin{remark}
    With more work one can establish an asymptotic formula
    \begin{equation*}
        \sum_{\substack{0 \neq a \in \mathcal{O}^*_K \\ N(a) \leq A}} |\tau_4(a ; u)|^2  \sim \frac{64 \norm{\vartheta_{i_u}}^2}{3 \cdot \gamma_4(2)} \cdot A^{3/4} \qquad \text{as } A \to \infty.
    \end{equation*}
    This can be done either by choosing a family of majorants and minorants with the required decay of Mellin transforms, or by improving the bound in \cref{Gilem} to capture the decay of $\gamma_4(u)$ as previously alluded to (allowing the use of any Schwartz test function).
\end{remark}

We will need a slight refinement of the lower bound given in the previous lemma, where the sum is restricted to squarefree values. To avoid sieving (which would require dealing with metaplectic forms of varying level), we use a trick based on Suzuki's partial evaluation of $\tau_4$ \cite{Suz1}.

\begin{lemma}[Lower bound for $\tau_4$ along squarefree values]\label{lemma:squarefree_tau_4}
    For any sufficiently large $A\geq 1$ we have
    \begin{equation*}
        \max_{\substack{u \equiv 1 \pmod{\lambda^3} \\ \eta^4 =1 \\ 0 \leq k \leq 3}} \sum_{\substack{a \in \Z[i] \\ a \equiv 1 \pmod{\lambda^3} \\ N(a) \leq A}} \mu^2(a) \cdot |\tau_4(\eta \lambda^k a ; u)|^2 \gg A^{3/4}.
    \end{equation*}
\end{lemma}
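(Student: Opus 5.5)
The plan is to deduce this from the lower bound in \cref{prop:tau_4_second_moment}, using Suzuki's relations \eqref{quartrel}--\eqref{linrel2} to reduce every nonzero $a \in \mathcal{O}_K^*$ to a squarefree core. Fix $u$. Each $0 \neq a \in \lambda^{-2}\mathcal{O}_K$ can be written uniquely as $a = \eta \lambda^{j} m$ with $\eta^4 = 1$, $j \geq -2$ and $m \equiv 1 \pmod{\lambda^3}$. Then factor $m = m_1 m_2^2 m_3^3 m_4^4$ with $m_1, m_2, m_3$ squarefree, pairwise coprime, and all $m_i \equiv 1 \pmod{\lambda^3}$.

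The steps for the odd part are as follows.
\begin{enumerate}
\item By \eqref{quartrel}, $\tau_4(a;u) = \tau_4(\eta\lambda^j m_1 m_2^2 m_3^3; u)$.
\item By \eqref{cuberel}, applied with $m = m_3$ and $\delta = \eta\lambda^j m_1 m_2^2$, which is coprime to $m_3$, this vanishes unless $m_3 = 1$.
\item By \eqref{squarerel}, applied with $m = m_2$ and $\delta = \eta\lambda^j m_1$, and using $|g_4(\delta, m_2)| \leq N(m_2)^{1/2}$, we get
\begin{equation*}
|\tau_4(a;u)|^2 \leq N(m_2)^{-1/2}\, |\tau_4(\eta \lambda^j m_1; u')|^2 ,
\end{equation*}
where $u' \equiv m_2 u \pmod 4$ takes one of only two values modulo $4$.
\end{enumerate}
Summing over $a$ with $N(a) \leq A$ and grouping by $(\eta, j, m_2, m_4)$, the total is then bounded by
\begin{equation*}
\sum_{\eta, j}\ \sum_{m_2, m_4} N(m_2)^{-1/2} \sum_{\substack{m_1 \equiv 1 \ (\lambda^3) \\ N(m_1) \leq A/(2^j N(m_2)^2 N(m_4)^4)}} \mu^2(m_1)\, |\tau_4(\eta\lambda^j m_1; u')|^2 .
\end{equation*}
Write $S(B) := \max_{u', \eta, k} \sum_{N(m_1) \leq B} \mu^2(m_1) |\tau_4(\eta\lambda^k m_1;u')|^2$, which is the quantity in the lemma. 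Suppose $S(B) \leq \epsilon B^{3/4}$ for all large $B$ and some small $\epsilon$. The inner sum is then at most $\epsilon (A/(N(m_2)^2 N(m_4)^4))^{3/4}$, up to the $\lambda$-issue treated below. The outer sum $\sum N(m_2)^{-1/2-3/2} N(m_4)^{-3}$ converges, so we would obtain $\sum_{N(a) \leq A} |\tau_4(a;u)|^2 \ll \epsilon A^{3/4}$, contradicting the lower bound of \cref{prop:tau_4_second_moment}. Small values of $B$ only contribute $O(1)$ per pair $(m_2, m_4)$. These are harmless after restricting to $N(m_2)^2 N(m_4)^4 \leq A^{1-\theta}$, bounding the remaining large-$(m_2,m_4)$ terms with the upper bound $\ll B^{3/4}$ from \cref{prop:tau_4_second_moment}, and using the same damping.

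The main obstacle is the $\lambda$-power: the maximum in the lemma only allows $0 \leq k \leq 3$, while $j$ ranges over all integers $\geq -2$. The first thing I would try is a $\lambda$-analogue of \eqref{quartrel}, namely $\tau_4(\lambda^4\delta;u) = \tau_4(-4\delta;u)$ equal to $\tau_4(\delta;u)$ up to a bounded factor (or with decay). I would look for this in Suzuki's treatment of the $2$-part, via the Hecke operator at $\lambda$ or the explicit $\lambda$-adic computations in \cite{Suz1}, and it would reduce $j$ modulo $4$ to $k \in \{0,1,2,3\}$ after adjusting $\eta$. The reduction maps $j \mapsto k$ with $k \equiv j \pmod 4$, and the offset $j = -2$ is absorbed by $\delta = \lambda^{-4}a \in \Lambda^*$. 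This gives a factor $2^{-3(j-k)/4}$ from the shrinking range, which is summable in $j$.

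If no such relation is available, the fallback is to restrict the whole argument to $a$ with $\operatorname{ord}_\lambda(a) \leq 1$. This would require a lower bound for $\sum |\tau_4(a;u)|^2$ over that subset, obtained by subtracting the contribution of $\lambda^2 \mid a$. Bounding that contribution would need a Rankin--Selberg upper bound on the sublattice $\lambda^2\Lambda^*$, and showing it is strictly smaller than the full main term is exactly the delicate point.
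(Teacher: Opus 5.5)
Your strategy matches the paper's: Suzuki's relations \eqref{quartrel}, \eqref{cuberel}, \eqref{squarerel} reduce to squarefree cores with weight $N(m_2)^{-1/2}$, and then both bounds of \cref{prop:tau_4_second_moment} are combined with a damped sum. You group by $(j,m_2,m_4)$ and sum over $m_1$ inside, whereas the paper sums over $b,d$ first for each squarefree $a$; this is only bookkeeping.

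\textbf{The gap: the $\lambda$-reduction.} As written, the argument has a hole exactly where you flag the ``main obstacle''. The reduction of $\lambda^j$ to $0\le k\le 3$ is left to be looked up, with a speculative fallback. It is needed for two reasons. First, it lands you in the lemma's range of $k$. Second, it makes the tail summable in $j$: without it, \cref{prop:tau_4_second_moment} only bounds the fixed-$j$ sum over $N(m_1)\le B$ by $(2^jB)^{3/4}$, which loses the factor $2^{-3j/4}$.

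The missing fact is elementary and needs nothing from Suzuki. Every $c$ in $\psi(\delta,s;u)$ is coprime to $\lambda$, and $\lambda^4=-4$. Hence $\big(\frac{\lambda^4}{c}\big)_4=\big(\frac{\lambda}{c}\big)_4^4=1$. Substituting $d\mapsto \lambda^4 d$ in the Gauss sum then gives $g_4(\lambda^4\delta,c)=g_4(\delta,c)$ termwise. So $\psi(\lambda^4\delta,s;u)=\psi(\delta,s;u)$ identically, and $\tau_4(\eta\lambda^j a;u)=\tau_4(\eta\lambda^{j'}a;u)$ whenever $j\equiv j'\pmod 4$. This holds with the same $\eta$ and with no loss. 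It is exactly the identity the paper invokes, so your fallback via a Rankin--Selberg bound on the sublattice $\lambda^2\Lambda^*$ can be dropped.

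\textbf{A second, smaller issue: the quantifiers in your contradiction.} Refuting ``$S(B)\le \epsilon B^{3/4}$ for all large $B$'' only shows $S(B)\gg B^{3/4}$ for infinitely many $B$. The lemma needs it for every large $A$, and so does its use in Case~2 of the proof of \cref{thm:quartic_main}, where $A$ is arbitrary. Argue directly at a fixed $A$ instead:
\begin{enumerate}
\item Bound the $O_C(1)$ terms with $2^jN(m_2)^2N(m_4)^4\le C$ by $S(4A)$, using that $S$ is nondecreasing and $j\ge -2$.
\item Bound the remaining terms by the upper bound of \cref{prop:tau_4_second_moment}, which is legitimate after the $\lambda$-reduction, times the convergent weight $2^{-3j/4}N(m_2)^{-2}N(m_4)^{-3}$. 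The tail of this sum beyond $C$ tends to $0$ as $C\to\infty$.
\item Take $C$ to be a large absolute constant.
\end{enumerate}
This gives $S(4A)\gg A^{3/4}$ for every large $A$. It is the same device as the paper's split at $N(\lambda^k a)=A/C$.
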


\begin{proof}
    For any $0 \neq n \in \mathcal{O}_K^* =  \lambda^{-2}\mathcal{O}_K$, we can uniquely decompose $n = \eta \lambda^k ab^2c^3d^4$ for $\eta \in \boldsymbol{\mu}_K =\{\pm 1, \pm i \}$, $k \in \Z$ with $k \geq -2$, and $a, b, c, d\equiv 1 \pmod{\lambda^3}$ with $\mu^2(abc)=1$. For any $u \equiv 1 \pmod{\lambda^3}$, Suzuki's properties for $\tau_4$ then give
    \begin{align*}
        \tau_4(n; u) = \tau_4(\eta \lambda^k a b^2 c^3 d^4; u) \overset{\eqref{quartrel}}{=} \tau_4(\eta \lambda^k a b^2 c^3; u) \overset{\eqref{cuberel}}{=} \mathbf{1}_{c=1} \cdot \tau_4(\eta \lambda^k a b^2; u),
    \end{align*}
    and finally
    \begin{equation*}
        |\tau_4(n;u)| \overset{\eqref{squarerel}}{=} \mathbf{1}_{c=1} \cdot \Big|\frac{\overline{g_4(\eta \lambda^k a,b)}}{N(b)^{3/4}}\Big| \cdot|\tau_4(\eta \lambda^k a; bu)| = \frac{\mathbf{1}_{c=1}}{N(b)^{1/4}} \cdot |\tau_4(\eta \lambda^k a; bu)|.
    \end{equation*}
    Therefore 
    \begin{equation*}
        |\tau_4(n;1)|^2 + |\tau_4(n;1+\lambda^3)|^2 \leq \frac{\mathbf{1}_{c=1}}{\sqrt{N(b)}} \cdot \big(|\tau_4(\eta \lambda^k a; 1)|^2 + |\tau_4(\eta \lambda^k a; 1+\lambda^3)|^2\big).
    \end{equation*}
    From the equation above and \cref{prop:tau_4_second_moment}, for every sufficiently large $A\geq 1$ we obtain
    \begin{align} \label{sqintermed}
        A^{3/4} &\ll \sum_{\substack{0 \neq n \in \mathcal{O}^*_K \\ N(n) \leq A}} |\tau_4(n;1)|^2 + |\tau_4(n;1+\lambda^3)|^2 \nonumber \\
        &= \sum_{\substack{\eta \in \boldsymbol{\mu}_K \\ k \in \Z_{\geq -2}}} \sum_{\substack{a \equiv 1 \pmod{\lambda^3} \\ N(\lambda^k a) \leq A}} \mu^2(a) \big(|\tau_4(\eta \lambda^k a; 1)|^2 + |\tau_4(\eta \lambda^k a; 1+\lambda^3)|^2\big) \sum_{\substack{b, d \equiv 1 \pmod{\lambda^3} \\ N(b^2d^4) \leq \frac{A}{N(\lambda^ka)} \\ (a, b)=1}} \frac{\mu^2(b)}{\sqrt{N(b)}} \nonumber \\
        & \ll A^{1/4} \sum_{\substack{\eta \in \boldsymbol{\mu}_K \\ k \in \Z_{\geq -2}}} \sum_{\substack{a \equiv 1 \pmod{\lambda^3} \\ N(\lambda^k a) \leq A}} \mu^2(a) \cdot \frac{|\tau_4(\eta \lambda^k a; 1)|^2 + |\tau_4(\eta \lambda^k a; 1+\lambda^3)|^2}{N(\lambda^k a)^{1/4}} \cdot \log\Big(2 + \frac{A}{N(\lambda^k a)}\Big). 
    \end{align}
    
    For any parameter $C \geq 2$, the contribution of the pairs $(a, k)$ with $N(\lambda^k a) < \frac{A}{C}$ to the display above is (writing $r = \eta \lambda^k a$)
    \begin{align*}
        &\ll A^{1/4} \sum_{\substack{0 \neq r \in \mathcal{O}^*_K \\ N(r) < \frac{A}{C}}} \frac{|\tau_4(r; 1)|^2 + |\tau_4(r; 1+\lambda^3)|^2}{N(r)^{1/4}} \cdot \log\Big(2 + \frac{A}{N(r)}\Big) \\
        &\ll A^{1/4} \sum_{\substack{\ell \in \Z \\ 1 \ll R := 2^\ell < \frac{A}{C}}} \frac{1}{R^{1/4}} \log{\Big(\frac{A}{R}\Big)} \sum_{\substack{0 \neq r \in \mathcal{O}^*_K \\ R < N(r) \leq 2R}} |\tau_4(r;1)|^2 + |\tau_4(r;1+\lambda^3)|^2.
    \end{align*}
    Applying the upper bound of \cref{prop:tau_4_second_moment}, this contribution is 
    \begin{equation}\label{eq:small_a_bound}
        \ll A^{1/4} \sum_{\substack{\ell \in \Z \\ 1 \ll R := 2^\ell < \frac{A}{C}}} R^{1/2} \log{\Big(\frac{A}{R}\Big)} \ll \frac{A^{3/4}}{C^{1/2}} \log{C}.
    \end{equation}
 
   Note that for integers $k \equiv k' \pmod{4}$ we have $\tau_4(\eta \lambda^k a; u) = \tau_4(\eta \lambda^{k'} a; u)$. Therefore inserting \eqref{eq:small_a_bound} into the right side of 
    \eqref{sqintermed} leads to
    \begin{align*}
        A^{3/4} &\ll \frac{A^{3/4}}{C^{1/2}} \log{C} + C^{1/4} \log{C} \sum_{\substack{\eta \in \boldsymbol{\mu}_K \\ k \in \Z_{\geq -2}}} \sum_{\substack{a \equiv 1 \pmod{\lambda^3} \\ \frac{A}{C} \leq N(\lambda^k a) \leq A}} \mu^2(a) \cdot \big(|\tau_4(\eta \lambda^k a; 1)|^2 + |\tau_4(\eta \lambda^k a; 1+\lambda^3)|^2\big)  \\
        & \ll \frac{A^{3/4}}{C^{1/2}} \log{C} + C^{1/4} (\log{C})^2 \sum_{\substack{\eta^4 =1 \\ 0 \leq k \leq 3}} \sum_{\substack{a \equiv 1 \pmod{\lambda^3} \\ N(a) \leq 4A}} \mu^2(a) \cdot \big(|\tau_4(\eta \lambda^k a; 1)|^2 + |\tau_4(\eta \lambda^k a; 1+\lambda^3)|^2\big) .
    \end{align*}
    Choosing $C\geq 2$ to be a sufficiently large absolute constant, we can make the first term on the right side of the display above smaller than half the left side. Hence the second term on the right is $\gg A^{3/4}$, and this finishes the proof of \cref{lemma:squarefree_tau_4}.
\end{proof}


\section{Proof of \cref{thm:quartic_main}}

\begin{proof}[Proof of \cref{thm:quartic_main}]
    The duality principle for the large sieve \cite[(7.9) to (7.11)]{IK} and reciprocity for the quartic symbol imply \cite[Lemma~3.1]{BGL} that
    \begin{equation} \label{eq:duality}
        \Xi_4(A, B) \asymp \Xi_4(B, A).
    \end{equation}

We split the proof into different ranges for $A$ and $B$. Since the desired bound \eqref{eq:quartic_lower_bound} is symmetric in $A$ and $B$, by \eqref{eq:duality} it suffices to prove it for $A \geq B \geq 1$. We assume this from now on, so that \eqref{eq:quartic_lower_bound} becomes
    \begin{equation}\label{eq:desired_lower_bd}
        \Xi_4(A, B) \gg_\varepsilon A^{-\varepsilon} \big(A + A^{3/4} B^{1/2} \big).
    \end{equation}

    \subsection{Case 1: \texorpdfstring{$A \geq B^2$}{}} 

    Take $\boldsymbol{\beta}$ given by $\beta_{b} = \boldsymbol{1}_{b = 1}$ in \eqref{eq:Xi_def}. This shows that 
    \begin{equation*}
        \Xi_4(A, B) \gg A, 
    \end{equation*}
    which for $A \geq B^2$ implies the desired bound \eqref{eq:desired_lower_bd}.

 \subsection{Case 2: $B \leq A < B^2$}

    We may assume that $B$ (hence also $A$) is larger than any given constant, otherwise $1 \leq A, B \ll 1$ and \eqref{eq:desired_lower_bd} is trivial. Take $\boldsymbol{\beta}$ given by
    \begin{equation*}
        \beta_b = \overline{\widetilde{g}_4(\eta \lambda^k, b)} \cdot \mathbf{1}_{b \equiv u \pmod{4}} \cdot H\Big(\frac{N(b)}{B} \Big),
    \end{equation*}
    where $H:(0,\infty) \to \mathbb{R}_{\geq 0}$ is a smooth function with compact support in $(\frac{1}{2},1)$, which is fixed and not identically zero. Here $\eta \in \{\pm1, \pm i\}$, $0 \leq k\leq 3$, and $u \equiv 1 \pmod{\lambda^3}$ are chosen (depending on $A$) so that
    \begin{equation}\label{eq:tau_lower_assumption}
        \sum_{\substack{a \in \Z[i] \\ a \equiv 1 \pmod{\lambda^3} \\ N(a) \leq A}} \mu^2(a) \cdot |\tau_4(\eta \lambda^k a ; u)|^2 \gg A^{3/4},
    \end{equation}
    which is possible by \cref{lemma:squarefree_tau_4}, since we may assume that $A$ is sufficiently large. 
    
    It follows from $|{\widetilde{g}_4(\eta \lambda^k, b)}| = |{\widetilde{g}_4(b)}| = \mu^2(b)$ for $b \equiv 1 \pmod{\lambda^3}$ that
    \begin{equation}\label{eq:beta_L2_norm}
        \norm{\boldsymbol{\beta}}^2_2 \asymp B
    \end{equation}
    for sufficiently large $B$ (depending on $H$). Furthermore we have
    \begin{equation*}
        \Big(\frac{a}{b}\Big)_4 \cdot \overline{\widetilde{g}_4(\eta \lambda^k, b)} = \boldsymbol{1}_{(a, b) = 1} \cdot \overline{\widetilde{g}_4(\eta \lambda^k a, b)}.
    \end{equation*}
Thus the support condition on $H$ gives
    \begin{align} 
         \mathcal{X}(A, B) := & \sum_{\substack{ a \in \mathbb{Z}[i] \\ a \equiv 1 \pmod{\lambda^3} \\ N(a) \leq A  }} \mu^2(a) \cdot \Bigg | \sum_{\substack{ b \in \mathbb{Z}[i] \\ b \equiv 1 \pmod{\lambda^3} \\ N(b) \leq B }} \mu^2(b) \beta_b \Big(\frac{a}{b}\Big)_4  \Bigg|^2 \nonumber \\
         = & \sum_{\substack{ a \in \mathbb{Z}[i] \\ a \equiv 1 \pmod{\lambda^3} \\ N(a) \leq A  }} \mu^2(a) \cdot  \Bigg | \sum_{\substack{ b \in \mathbb{Z}[i] \\ b \equiv u \pmod{4} \\ (a, b) = 1}} \widetilde{g}_4(\eta \lambda^k a, b) H\Big(\frac{N(b)}{B}\Big) \Bigg |^2. \label{counter1}
    \end{align}

In preparation to evaluate the sum over $b$, let us introduce some notation from \cite[Section 6]{CDD26}. For any $\alpha \in \Z[i]$ with $\alpha\equiv 1 \pmod{\lambda^3}$ and $\mu^2(\alpha)=1$, $r \in \lambda^{-2} \mathbb{Z}[i]$, and $v \equiv 1 \pmod{\lambda^3}$, denote
    \begin{equation*}
        \psi_\alpha(r,s; v) := \sum_{\substack{c \in \Z[i] \\ c \equiv v \pmod{4} \\ (c, \alpha)=1}}  \frac{g_4(r, c)}{N(c)^s},
    \end{equation*}
    which converges absolutely for $\Re(s)>\frac{3}{2}$. Also denote
    \begin{equation} \label{deltastardef}
        \Delta_{\alpha}^{*}(r, s) := \prod_{\substack{\pi \text{ prime} \\ \pi \equiv 1 \pmod{\lambda^3} \\ \pi \mid \alpha}}  \Big(1+ g_2\Big(\frac{r}{\pi},\pi\Big) N(\pi)^{1-2s} \Big).
    \end{equation}
    By convention we set $g_2(\frac{r}{\pi}, \pi) = 0$ if $\pi \nmid r$.

    After Mellin inversion of the Schwartz function $H$, the sum over $b$ in \eqref{counter1} is equal to
    \begin{equation} \label{counter2}
        \frac{1}{2 \pi i} \int_{(2)} \widetilde{H}(s) B^s \psi_a\big(\eta \lambda^k a, s + \tfrac{1}{2}; u\big) \, ds.
    \end{equation}
    Now we use the crucial fact that \cite[Lemma 6.1 (iii)]{CDD26} implies, for $a \equiv 1 \pmod{\lambda^3}$ with $\mu^2(a)=1$, that
    \begin{equation*}
        \psi_{a}(\eta \lambda^k a, s; u) \cdot \Delta^{*}_{a}(\eta \lambda^k a, s) = \psi(\eta \lambda^k a, s; u).
    \end{equation*}
    In particular, by \eqref{connect} and \cref{merovarphi} we obtain the meromorphic continuation of $\psi_{a}(\eta \lambda^k a, s; u)$ to $s \in \C$, and polynomial bounds in vertical strips as in \eqref{convexbd}.
    
    Shifting the contour in \eqref{counter2} to $\Re(s)=\frac{1}{2}+\varepsilon$, we encounter (at most) a simple pole at $s=\frac{3}{4}$. Again by \eqref{resD} we deduce that \eqref{counter2} is equal to a term
    \begin{align}  \label{bias}
        \mathcal{P}(\eta \lambda^k a; u) := \widetilde{H}(\tfrac{3}{4}) \cdot B^{3/4} \cdot \Delta^{*}_{a}(\eta \lambda^k a, \tfrac{5}{4})^{-1} \cdot \tau_4(\eta \lambda^k a; u),
    \end{align}
    coming from the pole at $s = \frac{3}{4}$, plus the remaining integral term
    \begin{equation}\label{eq:integral_term}
        \mathcal{I}(\eta \lambda^k a; u) := \frac{1}{2 \pi i} \int_{(\frac{1}{2}+\varepsilon)} \widetilde{H}(s) \cdot B^s \cdot \Delta^{*}_{a}(\eta \lambda^k a, s + \tfrac{1}{2})^{-1} \cdot \psi(\eta \lambda^k a, s+\tfrac{1}{2}; u) \, ds.
    \end{equation}
    Since $|{g_2(\frac{\eta \lambda^k a}{\pi}, \pi)}| \leq N(\pi)^{1/2}$, for $\Re(s) \geq \frac{1}{2}$ we have from \eqref{deltastardef} that
    \begin{equation*}
        N(a)^{-\varepsilon} \ll_\varepsilon |{\Delta^{*}_{a}(\eta \lambda^k a, s + \tfrac{1}{2})}| \ll_\varepsilon N(a)^{\varepsilon}.
    \end{equation*}
    Thus 
    \begin{equation}\label{eq:P_lower_bd}
        |{\mathcal{P}(\eta \lambda^k a; u)}|^2 \gg_{\varepsilon} A^{-2\varepsilon} B^{3/2} \cdot |{\tau_4(\eta \lambda^k a; u)}|^2,
    \end{equation}
    and by Cauchy--Schwarz also
    \begin{equation}\label{eq:I_upper_bd}
        |{\mathcal{I}(\eta \lambda^k a; u)}|^2 \ll_{\varepsilon} A^{2\varepsilon} B^{1+2\varepsilon} \int_{(\frac{1}{2}+\varepsilon)} |{\widetilde{H}(s)}| \cdot |{\psi(\eta \lambda^k a, s+\tfrac{1}{2}; u)}|^2 \, |{ds}|.
    \end{equation}

It follows from the AM-GM inequality that $|{x+y}|^2 \geq \frac{1}{2}|{x}|^2 - |{y}|^2$, and so we conclude from \eqref{counter1} that
    \begin{equation} \label{Xexpression}
        \mathcal{X}(A, B) = \sum_{\substack{ a \in \mathbb{Z}[i] \\ a \equiv 1 \pmod{\lambda^3} \\ N(a) \leq A}} \mu^2(a) \cdot \big| \mathcal{P}(\eta \lambda^k a; u) + \mathcal{I}(\eta \lambda^k a; u) \big|^2 \geq \frac{\mathcal{M}(A, B)}{2} - \mathcal{E}(A, B),
    \end{equation}
    where
    \begin{align} \label{Mdef}
        \mathcal{M}(A, B):= \sum_{\substack{ a \in \mathbb{Z}[i] \\ a \equiv 1 \pmod{\lambda^3} \\ N(a) \leq A  }} \mu^2(a) \cdot |{\mathcal{P}(\eta \lambda^k a; u)}|^2
    \end{align}
    and
    \begin{equation} \label{Edef}
        \mathcal{E}(A, B) := \sum_{\substack{ a \in \mathbb{Z}[i] \\ a \equiv 1 \pmod{\lambda^3} \\ N(a) \leq A  }} \mu^2(a) \cdot |{\mathcal{I}(\eta \lambda^k a; u)}|^2.
    \end{equation}
   
 By \eqref{eq:P_lower_bd} and \eqref{eq:tau_lower_assumption}, the main term satisfies
    \begin{equation}\label{eq:M_lower_bd}
        \mathcal{M}(A, B) \gg_\varepsilon A^{-\varepsilon} B^{3/2} \sum_{\substack{ a \in \mathbb{Z}[i] \\ a \equiv 1 \pmod{\lambda^3} \\ N(a) \leq A  }} \mu^2(a) \cdot |{\tau_4(\eta \lambda^k a; u)}|^2 \gg (AB)^{-\varepsilon} A^{3/4} B^{3/2}.
    \end{equation}

    Finally, using \eqref{eq:I_upper_bd} and \cref{prop:psi_second_moment} we see that the error term satisfies
    \begin{align}
        \mathcal{E}(A, B) &\ll_{\varepsilon} (AB)^{\varepsilon} B \int_{(\frac{1}{2}+\varepsilon)} |{\widetilde{H}(s)}| \cdot \sum_{\substack{0 \neq a' \in \mathcal{O}^*_K \\ N(a') \leq 8A}} |{\psi(a', s+\tfrac{1}{2}; u)}|^2 \, |{ds}| \nonumber \\
        & \ll_{\varepsilon} (AB)^{1 + \varepsilon} \int_{(\frac{1}{2}+\varepsilon)} |{\widetilde{H}(s)}| \cdot |{s}|^{2000} \, |{ds}| \ll (AB)^{1 + \varepsilon}. \label{eq:E_upper_bd}
    \end{align}
    
We now split the proof into two further cases. First, assume that $\mathcal{M}(A, B) \geq 4\cdot  \mathcal{E}(A, B)$. Then \eqref{Xexpression} and \eqref{eq:M_lower_bd} give
that
    \begin{equation*}
        \mathcal{X}(A, B) \geq \frac{\mathcal{M}(A, B)}{2} - \mathcal{E}(A, B) \geq \frac{\mathcal{M}(A, B)}{4} \gg_\varepsilon (AB)^{-\varepsilon} A^{3/4} B^{3/2}.
    \end{equation*}
    Using \eqref{eq:beta_L2_norm} and $A < B^2$, this implies \eqref{eq:desired_lower_bd} and finishes the proof.
    
Otherwise we have $\mathcal{M}(A, B) < 4 \cdot \mathcal{E}(A, B)$, which by \eqref{eq:M_lower_bd} and \eqref{eq:E_upper_bd} give
that
    \begin{equation}\label{eq:AB_constraint}
        (AB)^{-\varepsilon} A^{3/4} B^{3/2} \ll_\varepsilon \mathcal{M}(A, B) \ll \mathcal{E}(A, B) \ll_\varepsilon (AB)^{1+\varepsilon}.
    \end{equation}
    Thus
    \begin{equation*}
         B^{1/2 - \varepsilon} \ll_\varepsilon A^{1/4+\varepsilon}.
    \end{equation*}
In that case the trivial lower bound $\Xi_4(A, B) \gg A$, (arguing as in Case 1), implies the desired estimate \eqref{eq:desired_lower_bd}. This finishes the proof of \cref{thm:quartic_main}.
\end{proof}


\bibliographystyle{amsalpha}
\bibliography{RS} 

@article {Zag,
    AUTHOR = {Zagier, D.~},
     TITLE = {The {R}ankin-{S}elberg method for automorphic functions which
              are not of rapid decay},
   JOURNAL = {J. Fac. Sci. Univ. Tokyo Sect. IA Math.},
  FJOURNAL = {Journal of the Faculty of Science. University of Tokyo.
              Section IA. Mathematics},
    VOLUME = {28},
      YEAR = {1981},
    NUMBER = {3},
     PAGES = {415--437},
      ISSN = {0040-8980},
   MRCLASS = {10D20 (10D24)},
  MRNUMBER = {656029},
MRREVIEWER = {R. A. Rankin},
}

@article {KP,
    AUTHOR = {Kazhdan, D. A. and Patterson, S. J.},
     TITLE = {Metaplectic forms},
   JOURNAL = {Inst. Hautes \'{E}tudes Sci. Publ. Math.},
  FJOURNAL = {Institut des Hautes \'{E}tudes Scientifiques. Publications
              Math\'{e}matiques},
    NUMBER = {59},
      YEAR = {1984},
     PAGES = {35--142},
      ISSN = {0073-8301},
   MRCLASS = {22E55 (11F72 11R39 22E50)},
  MRNUMBER = {743816},
MRREVIEWER = {Jean-Loup Waldspurger},
       URL = {http://www.numdam.org/item?id=PMIHES_1984__59__35_0},
}

@book {FriedJS,
    AUTHOR = {Friedman, J.~S.~},
     TITLE = {The {S}elberg trace formula and {S}elberg zeta-function for
              cofinite {K}leinian groups with finite dimensional unitary
              representations},
      NOTE = {Thesis (Ph.D.)--State University of New York at Stony Brook},
 PUBLISHER = {ProQuest LLC, Ann Arbor, MI},
      YEAR = {2005},
     PAGES = {75},
      ISBN = {978-0542-20066-3},
   MRCLASS = {99-05},
  MRNUMBER = {2707460},
       URL =
              {http://gateway.proquest.com/openurl?url_ver=Z39.88-2004&rft_val_fmt=info:ofi/fmt:kev:mtx:dissertation&res_dat=xri:pqdiss&rft_dat=xri:pqdiss:3179592},
}

@incollection {Deli,
    AUTHOR = {Deligne, P.},
     TITLE = {Sommes de {G}auss cubiques et rev\^etements de {${\rm SL}(2)$}\ [d'apr\`es {S}. {J}. {P}atterson]},
 BOOKTITLE = {S\'eminaire {B}ourbaki (1978/79)},
    SERIES = {Lecture Notes in Math.},
    VOLUME = {770, Exp. No. 539},
     PAGES = {244--277},
 PUBLISHER = {Springer, Berlin},
      YEAR = {1980}
}

@incollection {BH,
    AUTHOR = {Bump, D.~ and Hoffstein, J.~},
     TITLE = {Some conjectured relationships between theta functions and
              {E}isenstein series on the metaplectic group},
 BOOKTITLE = {Number theory ({N}ew {Y}ork, 1985/1988)},
    SERIES = {Lecture Notes in Math.},
    VOLUME = {1383},
     PAGES = {1--11},
 PUBLISHER = {Springer, Berlin},
      YEAR = {1989},
   MRCLASS = {11F66 (11F30 11F55 11F70)},
  MRNUMBER = {1023915},
MRREVIEWER = {Minking Eie},
       DOI = {10.1007/BFb0083566},
       URL = {https://doi.org/10.1007/BFb0083566},
}

@article {Liu,
    AUTHOR = {Liu, Z.~},
     TITLE = {Explicit quadratic large sieve inequality},
   JOURNAL = {Acta Arith.},
  FJOURNAL = {Acta Arithmetica},
    VOLUME = {223},
      YEAR = {2026},
    NUMBER = {3},
     PAGES = {227--252},
      ISSN = {0065-1036,1730-6264},
   MRCLASS = {11L40 (11L26 11N35 11N36)},
  MRNUMBER = {5077440},
       DOI = {10.4064/aa250722-27-1},
       URL = {https://doi.org/10.4064/aa250722-27-1},
}

@incollection {Pat2,
    AUTHOR = {Patterson, S. J.},
     TITLE = {Whittaker models of generalized theta series},
 BOOKTITLE = {S{\'e}min. {Th{\'e}or}. {Nombres}, {Paris} 1982--83, {Prog}. {Math}. 51},
    SERIES = {Progr. Math.},
    VOLUME = {51},
     PAGES = {199--232},
 PUBLISHER = {Birkh\"{a}user Boston, Boston, MA},
      YEAR = {1984},
   MRCLASS = {11F70 (11F27 22E55)},
  MRNUMBER = {791596},
}

@article {EckPat,
    AUTHOR = {Eckhardt, C. and Patterson, S. J.},
     TITLE = {On the {F}ourier coefficients of biquadratic theta series},
   JOURNAL = {Proc. London Math. Soc. (3)},
  FJOURNAL = {Proceedings of the London Mathematical Society. Third Series},
    VOLUME = {64},
      YEAR = {1992},
    NUMBER = {2},
     PAGES = {225--264},
      ISSN = {0024-6115},
   MRCLASS = {11F30 (11F27)},
  MRNUMBER = {1143226},
MRREVIEWER = {Solomon Friedberg},
       DOI = {10.1112/plms/s3-64.2.225},
       URL = {https://doi.org/10.1112/plms/s3-64.2.225},
}

@article {DR,
    AUTHOR = {Dunn, A.~ and Radziwi\l\l, M.~},
     TITLE = {Bias in cubic {G}auss sums: {P}atterson's conjecture},
   JOURNAL = {Ann. of Math. (2)},
  FJOURNAL = {Annals of Mathematics. Second Series},
    VOLUME = {200},
      YEAR = {2024},
    NUMBER = {3},
     PAGES = {967--1057},
      ISSN = {0003-486X,1939-8980},
   MRCLASS = {11L20 (11F27 11F30 11L05 11L15 11N36)},
  MRNUMBER = {4816436},
MRREVIEWER = {Olivier\ Bordell\`es},
       DOI = {10.4007/annals.2024.200.3.3},
       URL = {https://doi.org/10.4007/annals.2024.200.3.3},
}

@article{DDHL,
      title={Quartic {G}auss sums over primes and metaplectic theta functions}, 
      author={David, C.~ and Dunn, A.~ and  Hamieh, H.~ and Lin, H.~},
        JOURNAL = {to appear in Algebra \& Number Theory},
      year={2025},
      note = {\href{https://arxiv.org/abs/2306.11875}{\texttt{arXiv:2306.11875}}}
}

@article {Suz1,
    AUTHOR = {Suzuki, T.~},
     TITLE = {Some results on the coefficients of the biquadratic theta
              series},
   JOURNAL = {J. Reine Angew. Math.},
  FJOURNAL = {Journal f\"{u}r die Reine und Angewandte Mathematik. [Crelle's
              Journal]},
    VOLUME = {340},
      YEAR = {1983},
     PAGES = {70--117},
      ISSN = {0075-4102},
   MRCLASS = {10D24 (10A15 10D12)},
  MRNUMBER = {691962},
MRREVIEWER = {A. I. Vinogradov},
       DOI = {10.1515/crll.1983.340.70},
       URL = {https://doi.org/10.1515/crll.1983.340.70},
}

@article {FHL,
    AUTHOR = {Friedberg, S.~ and Hoffstein, J.~ and Lieman, D.~},
     TITLE = {Double {D}irichlet series and the {$n$}-th order twists of
              {H}ecke {$L$}-series},
   JOURNAL = {Math. Ann.},
  FJOURNAL = {Mathematische Annalen},
    VOLUME = {327},
      YEAR = {2003},
    NUMBER = {2},
     PAGES = {315--338},
      ISSN = {0025-5831,1432-1807},
   MRCLASS = {11R42 (11F66 11M41 11R47)},
  MRNUMBER = {2015073},
MRREVIEWER = {M.\ Ram\ Murty},
       DOI = {10.1007/s00208-003-0455-4},
       URL = {https://doi.org/10.1007/s00208-003-0455-4},
}

@book {IK,
    AUTHOR = {Iwaniec, H.~ and Kowalski, E.~},
     TITLE = {Analytic number theory},
    SERIES = {American Mathematical Society Colloquium Publications},
    VOLUME = {53},
 PUBLISHER = {American Mathematical Society, Providence, RI},
      YEAR = {2004},
     PAGES = {xii+615},
      ISBN = {0-8218-3633-1},
   MRCLASS = {11-02 (11Fxx 11Lxx 11Mxx 11Nxx)},
  MRNUMBER = {2061214},
MRREVIEWER = {K.\ Soundararajan},
       DOI = {10.1090/coll/053},
       URL = {https://doi.org/10.1090/coll/053},
}

@book {GR,
    AUTHOR = {Gradshteyn, I. S. and Ryzhik, I. M.},
     TITLE = {Table of integrals, series, and products},
   EDITION = {8th edition},
      NOTE = {Translated from the Russian.
              Translation edited and with a preface by V. Moll and D. Zwillinger},
 PUBLISHER = {Elsevier/Academic Press, Amsterdam},
      YEAR = {2015},
     PAGES = {xlvi+1133},
      ISBN = {978-0-12-384933-5},
   MRCLASS = {00A22 (33-00)},
  MRNUMBER = {3307944},
}

@misc{SarCoh,
  author       = {Cohen, P.~ and Sarnak, P.~},
  title        = {Notes on the {T}race {F}ormula, {C}hapter 6: Eisenstein series for hyperbolic manifolds},
  year         = {1980},
  note         = {Available at: \url{https://publications.ias.edu/sarnak}},
}

@misc{SarCoh_chapt7,
  author       = {Cohen, P.~ and Sarnak, P.~},
  title        = {Notes on the {T}race {F}ormula, {C}hapter 7: continuous spectrum and the trace formula in the co-finite case},
  year         = {1980},
  note         = {Available at: \url{https://publications.ias.edu/sarnak}},
}

@article {Dia,
    AUTHOR = {Diaconu, A.~},
     TITLE = {Mean square values of {H}ecke {$L$}-series formed with
              {$r$}-th order characters},
   JOURNAL = {Invent. Math.},
  FJOURNAL = {Inventiones Mathematicae},
    VOLUME = {157},
      YEAR = {2004},
    NUMBER = {3},
     PAGES = {635--684},
      ISSN = {0020-9910,1432-1297},
   MRCLASS = {11F66 (11F67 11R42)},
  MRNUMBER = {2092772},
MRREVIEWER = {Emmanuel\ P.\ Royer},
       DOI = {10.1007/s00222-004-0363-6},
       URL = {https://doi.org/10.1007/s00222-004-0363-6},
}

@book {Kub,
    AUTHOR = {Kubota, T.~},
     TITLE = {On automorphic functions and the reciprocity law in a number
              field},
    SERIES = {Lectures in Mathematics, Department of Mathematics, Kyoto
              University},
    VOLUME = {2},
 PUBLISHER = {Kinokuniya Book Store, Tokyo},
      YEAR = {1969},
     PAGES = {iii+65},
   MRCLASS = {10.20 (20.00)},
  MRNUMBER = {255490},
MRREVIEWER = {F.\ van der Blij},
}

@inproceedings {Sel,
    AUTHOR = {Selberg, A.~},
     TITLE = {Discontinuous groups and harmonic analysis},
 BOOKTITLE = {Proc. {I}nternat. {C}ongr. {M}athematicians ({S}tockholm,
              1962)},
     PAGES = {177--189},
 PUBLISHER = {Inst. Mittag-Leffler, Djursholm},
      YEAR = {1963},
   MRCLASS = {32.65 (20.65)},
  MRNUMBER = {0176097},
}

@article {BGL,
    AUTHOR = {Blomer, V.~ and Goldmakher, L.~ and Louvel, B.~},
     TITLE = {{$L$}-functions with {$n$}-th-order twists},
   JOURNAL = {Int. Math. Res. Not. IMRN},
  FJOURNAL = {International Mathematics Research Notices. IMRN},
      YEAR = {2014},
    NUMBER = {7},
     PAGES = {1925--1955},
      ISSN = {1073-7928,1687-0247},
   MRCLASS = {11M41 (11F66 11R42)},
  MRNUMBER = {3190355},
MRREVIEWER = {Timothy\ Lee\ Gillespie},
       DOI = {10.1093/imrn/rns257},
       URL = {https://doi.org/10.1093/imrn/rns257},
}

@article {Pat3,
    AUTHOR = {Patterson, S. J.},
     TITLE = {On the distribution of {K}ummer sums},
   JOURNAL = {J. Reine Angew. Math.},
  FJOURNAL = {Journal f\"ur die Reine und Angewandte Mathematik. [Crelle's
              Journal]},
    VOLUME = {303/304},
      YEAR = {1978},
     PAGES = {126--143},
      ISSN = {0075-4102,1435-5345},
   MRCLASS = {10G10},
  MRNUMBER = {514676},
MRREVIEWER = {Bruce\ C.\ Berndt},
       DOI = {10.1515/crll.1978.303-304.126},
       URL = {https://doi.org/10.1515/crll.1978.303-304.126},
}

@article {Miz,
    AUTHOR = {Mizuno, Y.},
     TITLE = {The {R}ankin-{S}elberg convolution for {C}ohen's {E}isenstein
              series of half integral weight},
   JOURNAL = {Abh. Math. Sem. Univ. Hamburg},
  FJOURNAL = {Abhandlungen aus dem Mathematischen Seminar der Universit\"{a}t
              Hamburg},
    VOLUME = {75},
      YEAR = {2005},
     PAGES = {1--20},
      ISSN = {0025-5858},
   MRCLASS = {11F37 (11F12 11F66)},
  MRNUMBER = {2187576},
MRREVIEWER = {M. Manickam},
       DOI = {10.1007/BF02942033},
       URL = {https://doi.org/10.1007/BF02942033},
}

@article {DG,
    AUTHOR = {Dutta Gupta, S.~},
     TITLE = {On the {R}ankin-{S}elberg method for functions not of rapid
              decay on congruence subgroups},
   JOURNAL = {J. Number Theory},
  FJOURNAL = {Journal of Number Theory},
    VOLUME = {62},
      YEAR = {1997},
    NUMBER = {1},
     PAGES = {115--126},
      ISSN = {0022-314X},
   MRCLASS = {11F66 (11F12)},
  MRNUMBER = {1430005},
MRREVIEWER = {Solomon Friedberg},
       DOI = {10.1006/jnth.1997.2035},
       URL = {https://doi.org/10.1006/jnth.1997.2035},
}

@book {H,
    AUTHOR = {Hasse, H.~},
     TITLE = {Vorlesungen \"{u}ber {Z}ahlentheorie},
    SERIES = {Die Grundlehren der mathematischen Wissenschaften in
              Einzeldarstellungen mit besonderer Ber\"{u}cksightigung der
              Anwendungsgebiete. Band LIX},
 PUBLISHER = {Springer-Verlag, Berlin-G\"{o}ttingen-Heidelberg},
      YEAR = {1950},
     PAGES = {xii+474},
   MRCLASS = {10.0X},
  MRNUMBER = {0051844},
MRREVIEWER = {P. T. Bateman},
}

@book {EGM,
    AUTHOR = {Elstrodt, J. and Grunewald, F. and Mennicke, J.},
     TITLE = {Groups acting on hyperbolic space. {Harmonic} analysis and number theory},
    SERIES = {Springer Monographs in Mathematics},
 PUBLISHER = {Springer-Verlag, Berlin},
      YEAR = {1998},
     PAGES = {xvi+524},
      ISBN = {3-540-62745-6},
   MRCLASS = {11F72 (11F12 11M36 22E40 57M50 57S30)},
  MRNUMBER = {1483315},
MRREVIEWER = {Stefan K\"{u}hnlein},
       DOI = {10.1007/978-3-662-03626-6},
       URL = {https://doi-org.proxy2.library.illinois.edu/10.1007/978-3-662-03626-6},
}

@article {HB,
    AUTHOR = {Heath-Brown, D. R.},
     TITLE = {Kummer's conjecture for cubic {G}auss sums},
   JOURNAL = {Israel J. Math.},
  FJOURNAL = {Israel Journal of Mathematics},
    VOLUME = {120},
      YEAR = {2000},
     PAGES = {97--124},
      ISSN = {0021-2172,1565-8511},
   MRCLASS = {11L05 (11L40)},
  MRNUMBER = {1815372},
MRREVIEWER = {Matti\ Jutila},
       DOI = {10.1007/s11856-000-1273-y},
       URL = {https://doi.org/10.1007/s11856-000-1273-y},
}

@article {Pat1,
    AUTHOR = {Patterson, S. J.},
     TITLE = {A cubic analogue of the theta series},
   JOURNAL = {J. Reine Angew. Math.},
  FJOURNAL = {Journal f\"{u}r die Reine und Angewandte Mathematik. [Crelle's
              Journal]},
    VOLUME = {296},
      YEAR = {1977},
     PAGES = {125--161},
      ISSN = {0075-4102},
   MRCLASS = {10D20},
  MRNUMBER = {563068},
       DOI = {10.1515/crll.1977.296.125},
       URL = {https://doi-org.proxy2.library.illinois.edu/10.1515/crll.1977.296.125},
}

@misc{DLMF,
         key = "{\relax DLMF}",
       title = "{\it NIST Digital Library of Mathematical Functions}",
howpublished = "\url{https://dlmf.nist.gov/}, Release 1.2.5 of 2025-12-15",
         url = "https://dlmf.nist.gov/",
        note = "F.~W.~J. Olver, A.~B. {Olde Daalhuis}, D.~W. Lozier, B.~I. Schneider,
                R.~F. Boisvert, C.~W. Clark, B.~R. Miller, B.~V. Saunders,
                H.~S. Cohl, and M.~A. McClain, eds."
}

@article {HM06,
    AUTHOR = {Harcos, G. and Michel, P.},
     TITLE = {The subconvexity problem for {R}ankin-{S}elberg
              {$L$}-functions and equidistribution of {H}eegner points.
              {II}},
   JOURNAL = {Invent. Math.},
  FJOURNAL = {Inventiones Mathematicae},
    VOLUME = {163},
      YEAR = {2006},
    NUMBER = {3},
     PAGES = {581--655}
}

@article {GL13,
    AUTHOR = {Goldmakher, L. and Louvel, B.},
     TITLE = {A quadratic large sieve inequality over number fields},
   JOURNAL = {Math. Proc. Cambridge Philos. Soc.},
  FJOURNAL = {Mathematical Proceedings of the Cambridge Philosophical
              Society},
    VOLUME = {154},
      YEAR = {2013},
    NUMBER = {2},
     PAGES = {193--212}
}

@article{CDD26,
    title={Non-vanishing for quartic {H}ecke {$L$}-functions and ranks of elliptic curves}, 
      author={Castillo, C. and de Faveri, A. and Dunn, A.},
      year={2026},
      note = {\href{https://arxiv.org/abs/2604.01316}{\texttt{arXiv:2604.01316}}}
}

@article {FF04,
    AUTHOR = {Fisher, B. and Friedberg, S.},
     TITLE = {Double {D}irichlet series over function fields},
   JOURNAL = {Compos. Math.},
  FJOURNAL = {Compositio Mathematica},
    VOLUME = {140},
      YEAR = {2004},
    NUMBER = {3},
     PAGES = {613--630}
}

@book {Mon71,
    AUTHOR = {Montgomery, H. L.},
     TITLE = {Topics in multiplicative number theory},
    SERIES = {Lecture Notes in Mathematics},
    VOLUME = {Vol. 227},
 PUBLISHER = {Springer-Verlag, Berlin-New York},
      YEAR = {1971},
     PAGES = {ix+178}
}

@article {HB95,
    AUTHOR = {Heath-Brown, D. R.},
     TITLE = {A mean value estimate for real character sums},
   JOURNAL = {Acta Arith.},
  FJOURNAL = {Acta Arithmetica},
    VOLUME = {72},
      YEAR = {1995},
    NUMBER = {3},
     PAGES = {235--275}
}

@article {IL07,
    AUTHOR = {Iwaniec, H. and Li, X.},
     TITLE = {The orthogonality of {H}ecke eigenvalues},
   JOURNAL = {Compos. Math.},
  FJOURNAL = {Compositio Mathematica},
    VOLUME = {143},
      YEAR = {2007},
    NUMBER = {3},
     PAGES = {541--565}
}

@article{DDDS24,
    title={Non-vanishing for cubic {H}ecke {$L$}-functions}, 
      author={David, C. and de Faveri, A. and Dunn, A. and Stucky, J.},
      year={2024},
      note = {\href{https://arxiv.org/abs/2410.03048}{\texttt{arXiv:2410.03048}}}
}

@article {Xia24,
    AUTHOR = {Li, X.},
     TITLE = {Moments of quadratic twists of modular {$L$}-functions},
   JOURNAL = {Invent. Math.},
  FJOURNAL = {Inventiones Mathematicae},
    VOLUME = {237},
      YEAR = {2024},
    NUMBER = {2},
     PAGES = {697--733}
}

@article{SS24,
    title={The fourth moment of quadratic {D}irichlet {$L$}-{F}unctions {II}}, 
      author={Shen, Q. and Stucky, J.},
      journal = {to appear in Algebra \& Number Theory},
      year={2024},
      note = {\href{https://arxiv.org/abs/2402.01497}{\texttt{arXiv:2402.01497}}}
}

\end{document}